\theoremstyle{plain}
\newtheorem{thm}{Theorem}
\newtheorem{lem}{Lemma}
\newtheorem{cor}{Corollary}
\newtheorem{prop}{Proposition}
\theoremstyle{definition}
\newtheorem{defi}{Definition}
\theoremstyle{remark}
\newtheorem{Rema}{Remark}
\newtheorem*{rema*}{Remark}
\newtheorem{Remas}{Remarks}
\newcommand{\ww}{w_{\varepsilon,\varepsilon'}}
\newcommand{\NN}{\mathbb{N}}
\newcommand{\RR}{\mathbb{R}}
\newcommand{\ZZ}{\mathbb{Z}}
\newcommand{\oneover}[1]{\frac{1}{#1}}
\DeclareMathOperator{\divergence}{div}
\newcommand{\restr}[2]{\ensuremath{\left. #1 \right|_{#2}}}
\newcommand{\www}{\tilde{\Omega}}
\newcommand{\diver}[1]{ \textnormal{div}\hspace{0.07cm} #1}
\newcommand{\cepsilon}{c _\varepsilon}
\newcommand{\cepsilonq}{c _{\varepsilon,q}}
\newcommand{\cepsiloninit}{c _{0, \varepsilon} }
\newcommand{\Omegaepsilon}{\Omega _\varepsilon }
\newcommand{\vepsilon}{v _\varepsilon }
\newcommand{\vepsilonq}{v _{\varepsilon,q} }
\newcommand{\vepsiloninit}{v _{0, \varepsilon} }
\author[T. Hmidi]{Taoufik Hmidi}
\address{IRMAR, Universit\'e de Rennes 1\\ Campus de
Beaulieu\\ 35~042 Rennes cedex\\ France}
\email{thmidi@univ-rennes1.fr}
\date{}
\begin{document}
\title{ Low Mach number limit for the isentropic Euler system with   axisymmetric
   initial data}

\date{\today}

\maketitle

\begin{abstract}
  This paper is devoted to the study of  
   the low Mach number limit   for the isentropic Euler system with axisymmetric initial data without swirl. In the first part of the paper we analyze the problem corresponding to the subcritical  regularities, that is $H^s$ \mbox{with $s>\frac52$.} Taking advantage of the Strichartz estimates and using the special structure of the vorticity we show that the \mbox{lifespan $T_\varepsilon$} of the solutions is  bounded below by $\log\log\log\frac1\varepsilon$, where $\varepsilon$ denotes the  Mach number.  Moreover, we prove that the incompressible parts  converge to the solution of the incompressible Euler system, when  the parameter 
   $\varepsilon$ goes to zero. In the second part of the paper we address  the same problem but for the Besov critical regularity  $B_{2,1}^{\frac52}$. This case turns out  to be  more subtle at least due to two facts. The first one is related to  the   Beale-Kato-Majda criterion which  is not known to be valid for  rough regularities.   The second one concerns the  critical aspect of the Strichartz estimate $L^1_TL^\infty$ for the acoustic parts $(\nabla\Delta^{-1}\diver\vepsilon,\cepsilon)$: it scales in   the space variables like the space of the initial data. 
    \end{abstract}

\tableofcontents

\section{Introduction}
\label{sec:introduction}

The object of this paper is to  study the incompressible limit problem for classical solutions  of the compressible isentropic Euler equations. The fluid is assumed to evolve  in the whole \mbox{space $\RR^3$} and it possesses  some special geometric property: the flow is invariant under the group of rotations around the vertical axis $(Oz)$.  The state of the fluid is described by the velocity field
$\vepsilon $ and the sound speed $ \cepsilon $, through a penalized quasilinear hyperbolic system, 

\begin{equation}
   \label{eqs:3}
    \left\{
     \begin{array}{l}
       \partial _t \vepsilon  + \vepsilon \cdot \nabla
       \vepsilon +\bar\gamma \cepsilon \nabla \cepsilon +
       \oneover{ \varepsilon } \nabla \cepsilon
       =0 \\[0.3ex]
       \partial _t \cepsilon + \vepsilon \cdot \nabla
       \cepsilon + \bar\gamma\cepsilon \divergence
       \vepsilon + \oneover{ \varepsilon } \divergence \vepsilon
       =0 \\[0.3ex]
       \restr{ (\vepsilon , \cepsilon) }{ t=0 }
       = (\vepsiloninit, \cepsiloninit ),
     \end{array}
   \right.
\end{equation}
with $\bar\gamma$ a strictly positive number and $\varepsilon$ a small parameter called the Mach number.
We point out that the derivation of this model can be done    from the  compressible isentropic equations after rescaling the time   and changes of variables, see for instance \cite{MR1975081,MR84d:35089,MR2002d:76095}. This model has been widely considered through the last decades and  a special attention is focused on the construction  of a  family of solutions with a non degenerate time existence. Nevertheless, the most relevant problem is to   study rigorously the convergence towards the incompressible Euler equations when the Mach number goes to zero. We recall that the incompressible Euler  system is given by,
\begin{equation}
   \label{eq:45}
   \left\{
     \begin{array}{l}
       \partial _t v + v \cdot \nabla v + \nabla p=0 \\
       \divergence v =0 \\
       \restr{ v }{ t=0 } = v _0 .
     \end{array}
   \right.
\end{equation}
 The answer to these problems depends on several factors: the domain where the fluid is assumed to evolve: the full space $\RR^d,$ the torus $\mathbb{T}^d$, bounded or unbounded domains. The second factor is the state of the initial data: whether they are well-prepared or not. In the well-prepared case \cite{MR84d:35089,klain-majda82}, we assume
that the initial data are slightly compressible, which means that  $\divergence \vepsiloninit =O(\varepsilon)$ \mbox{and
$\nabla \cepsiloninit =O(\varepsilon)$} as $\epsilon \rightarrow 0 $. However, in
the ill-prepared case \cite{MR87h:35279}, we only assume that the family $(\vepsiloninit,\cepsiloninit)_{\varepsilon} $ 
 is bounded in some Sobolev spaces $H^s$ with $ s>\frac{d}{2}+1$ and that the incompressible parts of $(\vepsiloninit)_{\varepsilon} $ tend to some field $ v _0 $.  Remark that in the well-prepared case we have a uniform bound of $(\partial_t\vepsilon)_{\varepsilon},$ and this allows to pass to the limit by using Aubin-Lions compactness lemma. Nevertheless, the ill-prepared case is more subtle because the time derivative  $\partial_t\vepsilon,$ is of size $O(\frac{1}{\varepsilon})$. To overcome this difficulty, Ukai \cite{MR87h:35279} used  the  dispersive effects generated by the acoustic waves in order to prove that the compressible part of the velocity and the acoustic term vanish when $\varepsilon$ goes to zero.   
In \cite{MR1975081}, we deal with a more degenerate case, we allow the initial data to be so ill-prepared that
corresponding solutions can tend to a vortex patch or even to a Yudovich
solution. Since these solutions do not belong to the Sobolev space $H
^{s}$ for any $s>2$, we  allow the initial data that are not
uniformly bounded in these spaces. 
We  point out that this problem has already been studied in numerous
papers, see for instance 
\cite{MR89d:76021,MR1886005,MR2000h:76144,MR2000j:35220,MR84d:35089,klain-majda82,MR1602773,MR99d:76084,MR85e:35077,MR2002d:76095,MR87h:35279}.

It is well-known that contrary to the incompressible Euler system, the equations \eqref{eqs:3} develop in space dimension two singularities in finite time and for some smooth initial data, see \cite{Ram}. This phenomenon holds true for higher dimension, see \cite{Sid}. On the other hand it was shown in  \cite{klain-majda82} that for the whole space or for the torus domain when the limit system \eqref{eq:45} exists for some \mbox{time interval $[0,T_0]$} then for ever $T<T_0$ there exists $\varepsilon_0>$ such that  for $\varepsilon<\varepsilon_0$ the solution to \eqref{eqs:3} lives till the time $T$.  This result was established for the well-prepared case and with sufficient smooth initial data, that is, in  $H^s$ and $s>\frac{d}{2}+2.$ We point out that this approach can not be achieved for $\frac{d}{2}+2>s>\frac{d}{2}+1$.  The proof is based on the perturbation theory by estimating in a suitable way the difference between the two systems. As an application  in space dimension two, if we denote by $T_\varepsilon$ the time lifespan of the solution $(\vepsilon,\cepsilon)$ then $\lim_{\varepsilon\to0}T_\varepsilon=+\infty.$  We observe that we can give an explicit lower bound for the lifespan $T_\varepsilon\ge C\log\log\frac1\varepsilon.$  In \cite{MR1975081}, we extend these results  in the case of Yudovich solutions. 

In this paper, we  try to accomplish the same program in dimension three for axisymmetric initial data. This is motivated by the works of \cite{Taira,Ukhovskii} where it is proven that the incompressible Euler system  is globally well-posed when the initial data is axisymmetric and belongs to $H^s, s>\frac52$. For the definition of the axisymmetry, see Definition \ref{defi1}. The proof relies on the special structure of the vorticity $\Omega$ which leads to a global bound of $\|\Omega(t)\|_{L^\infty}$ and then we  use the  Beale-Kato-Majda criterion \cite{Beale}.   Recently,  we established   in \cite{AHK} the global well-posedness for \eqref{eq:45}  with initial data lying in Borderline Besov \mbox{spaces $v_0\in B_{p,1}^{\frac3p+1}, 1\le p\le \infty$.} It is important to mention that in this context  the  Beale-Kato-Majda criterion is not known to be valid and  the geometry is crucially used in different steps of the proof and it is combined with a  dynamical interpolation method.

Our main goal here is to study the incompressible limit problem for both subcritical and critical cases with ill-prepared axisymmetric initial data.  Concerning the subcritical regularities we obtain the  following result.
\begin{thm}\label{thm1}
Let $s>\frac52$ and $\{(\vepsiloninit,\cepsiloninit)_{0<\varepsilon\le1}\}$ be a $H^s$-bounded family of axisymmetric initial data, that is
$$
\sup_{0<\varepsilon\le1}\|(\vepsiloninit,\cepsiloninit)\|_{H^s}<+\infty.
$$
Then the system \eqref{eqs:3} has a unique solution $(\vepsilon,\cepsilon)\in C([0,T_\varepsilon[; H^s),$ with
$$
T_\varepsilon\geq C\log\log\log(\frac1\varepsilon):=\tilde T_\varepsilon.
$$
The constant $C$ does not depend on $\varepsilon.$ Moreover, the compressible and acoustic parts of the solutions tend
   to zero: there exists $\sigma>0$ such that
   \begin{equation*}
   \|(\diver\vepsilon,\nabla \cepsilon)\|_{L^1_{\tilde T_\varepsilon} L^\infty}\le C_0\varepsilon^\sigma.
        \end{equation*}

Assume in addition that
the incompressible parts $(\mathcal{P}\vepsiloninit)$ converge in $L^2$ to some $v_0.$ Then 
   the incompressible parts of the solutions tend to the Kato's 
   solution $v$ of the system $(\ref{eq:45})$:
   \begin{equation*}
     \mathcal{P} \vepsilon \rightarrow v
     \quad \text{ in } L^\infty_{loc} (\RR ^{ +} ; L ^2 ).
   \end{equation*}
   \end{thm}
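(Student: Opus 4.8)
The plan is to run a continuity/bootstrap argument that couples an energy estimate for the full system with dispersive estimates for the acoustic component and with the axisymmetric structure of the vorticity. First I would establish local well-posedness in $H^s$ by Kato's theory for symmetric hyperbolic systems: after the usual symmetrization the penalization $\frac1\varepsilon(\nabla\cepsilon,\diver\vepsilon)$ is skew-adjoint and drops out of the $L^2$-type energy, so one obtains a maximal solution on $[0,T^\ast_\varepsilon[$ together with a Beale--Kato--Majda blow-up criterion (available since $s>\frac52$ is subcritical): if $T^\ast_\varepsilon<\infty$ then $\int_0^{T^\ast_\varepsilon}\|\nabla\vepsilon\|_{L^\infty}\,dt=\infty$. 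Commuting derivatives through \eqref{eqs:3} and again discarding the skew term yields
\begin{equation*}
\|(\vepsilon,\cepsilon)(t)\|_{H^s}\le \|(\vepsiloninit,\cepsiloninit)\|_{H^s}\,\exp\Big(C\int_0^t\big(\|\nabla\vepsilon\|_{L^\infty}+\|\nabla\cepsilon\|_{L^\infty}\big)\,ds\Big),
\end{equation*}
so the whole problem reduces to bounding $\int_0^t(\|\nabla\vepsilon\|_{L^\infty}+\|\nabla\cepsilon\|_{L^\infty})$ on as long an interval as possible.

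For the acoustic/compressible part I would rewrite the equations for the pair $(Q\vepsilon,\cepsilon)$, where $Q=\mathrm{Id}-\mathcal P$ is the compressible projection so that $Q\vepsilon=\nabla\Delta^{-1}\diver\vepsilon$, as a wave system with propagation speed $\frac1\varepsilon$ and right-hand side built from the quadratic nonlinearities. Applying Strichartz estimates for this semiclassical wave equation on $\RR^3$ gains a positive power of $\varepsilon$ in the $L^1_TL^\infty$ norm, controlled by the energy:
\begin{equation*}
\|(\diver\vepsilon,\nabla\cepsilon)\|_{L^1_TL^\infty}\le C\,\varepsilon^\sigma\,\exp\Big(C\int_0^T\|\nabla\vepsilon\|_{L^\infty}\,ds\Big).
\end{equation*}
This already yields the announced decay of the acoustic part and simultaneously controls $\|\nabla\cepsilon\|_{L^1_TL^\infty}$ and the compressible piece $\|\nabla Q\vepsilon\|_{L^1_TL^\infty}$ of the velocity gradient.

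The remaining incompressible piece $\|\nabla\mathcal P\vepsilon\|_{L^\infty}$ is where the axisymmetric geometry enters, and is, I expect, the main obstacle. I would combine the logarithmic estimate $\|\nabla\mathcal P\vepsilon\|_{L^\infty}\lesssim 1+\|\Omega\|_{L^\infty}\log(e+\|\vepsilon\|_{H^s})$ with a bound on $\|\Omega(t)\|_{L^\infty}$ read off from the vorticity equation. For axisymmetric data without swirl the vorticity reduces to its angular component $\omega_\theta$ and, in the incompressible case, $\omega_\theta/r$ is purely transported; compressibility adds correction terms proportional to $\diver\vepsilon$, which I would absorb using precisely the Strichartz smallness of the previous step. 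The output of this analysis is a bound of the form $\int_0^T\|\nabla\mathcal P\vepsilon\|_{L^\infty}\,dt\lesssim \exp(\exp(CT))$, one exponential worse than the two-dimensional case because $\|\Omega\|_{L^\infty}$ is no longer conserved. The triple logarithm then appears transparently: to keep the acoustic factor $\varepsilon^\sigma\exp(C\int_0^T\|\nabla\vepsilon\|_{L^\infty})$ below, say, $\varepsilon^{\sigma/2}$ one needs $\exp(\exp(CT))\lesssim\log\frac1\varepsilon$, i.e. $T\le C'\log\log\log\frac1\varepsilon=\tilde T_\varepsilon$. A continuity argument closes this coupled loop—the Strichartz smallness and the energy bound reinforce each other—and shows the solution cannot leave $H^s$ before $\tilde T_\varepsilon$, giving both the lifespan bound and the acoustic decay; the genuinely delicate point is quantifying the compressible corrections to the transport of $\omega_\theta/r$ without destroying this balance.

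Finally, for the convergence of the incompressible parts I would project \eqref{eqs:3} with $\mathcal P$, which annihilates the singular gradient term, to get
\begin{equation*}
\partial_t\mathcal P\vepsilon+\mathcal P(\vepsilon\cdot\nabla\vepsilon)=-\bar\gamma\,\mathcal P(\cepsilon\nabla\cepsilon).
\end{equation*}
Writing $w_\varepsilon=\mathcal P\vepsilon-v$, subtracting the incompressible Euler equation and splitting $\vepsilon=\mathcal P\vepsilon+Q\vepsilon$, every term containing $Q\vepsilon$ or $\cepsilon$ tends to zero in $L^1_TL^\infty$ by the Strichartz decay, while $\mathcal P\vepsilon$ stays bounded in $L^\infty_{loc}(\RR^+;H^s)$. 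Pairing the equation for $w_\varepsilon$ with $w_\varepsilon$ in $L^2$ and using $\diver v=0$, the nonlinear difference is controlled by $\|\nabla v\|_{L^\infty}\|w_\varepsilon\|_{L^2}^2$ plus the small error terms, so Gronwall together with the global Lipschitz bound on Kato's solution $v$ gives $\|w_\varepsilon(t)\|_{L^2}\le(\|\mathcal P\vepsiloninit-v_0\|_{L^2}+o(1))\exp(\int_0^t\|\nabla v\|_{L^\infty})$. Since the initial discrepancy vanishes by assumption, this yields the strong convergence $\mathcal P\vepsilon\to v$ in $L^\infty_{loc}(\RR^+;L^2)$, which is the last assertion.
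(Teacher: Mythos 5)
Your proposal is correct and, for the core of the theorem (lifespan bound and acoustic decay), follows essentially the paper's own route: $H^s$ energy estimates with growth $e^{CV_\varepsilon(t)}$ (Proposition \ref{energy1}), Strichartz estimates for the $\frac1\varepsilon$-speed wave system satisfied by $(\mathcal{Q}\vepsilon,\cepsilon)$, the transport structure of $\Omega_\varepsilon/r$ together with the Shirota--Yanagisawa bound on $v^r_\varepsilon/r$ to get $\|\Omega_\varepsilon(t)\|_{L^\infty}\le C_0e^{C_0t}$ modulo the acoustic contribution, the logarithmic estimate of Lemma \ref{lem22}, and a continuity argument in which $V_\varepsilon(T)\lesssim\exp(\exp(C_0T))$ must stay below $\log\frac1\varepsilon$, producing the triple logarithm exactly as in Proposition \ref{lower}. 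One step, however, is asserted rather than proved, and it is the one place where subcriticality genuinely enters: the claim that Strichartz estimates give an $\varepsilon^\sigma$ gain in the $L^1_TL^\infty$ norm. The dispersive estimate (Lemma \ref{lemst}) only yields $L^r_T$ control for $r>2$ with gain $\varepsilon^{1/r}$; the passage to $L^1_TL^\infty$ while retaining a positive power of $\varepsilon$ is achieved in the paper by a frequency interpolation between the $L^r_TL^\infty$ dispersive bound and the $L^\infty_TH^s$ energy bound (Lemma \ref{lem2} and Proposition \ref{stt1}), which is exactly where $s>\frac52$ is used and which breaks down at critical regularity. Your sketch leaves this mechanism implicit, so you should be aware that the quoted inequality is not "off the shelf" Strichartz.

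For the incompressible limit your route differs from the paper's. You compare $\mathcal{P}\vepsilon$ directly with Kato's solution $v$ via a Gronwall estimate on the difference; this presupposes that $v$ exists on every $[0,T]$ with $\nabla v\in L^1_TL^\infty$, which for axisymmetric $v_0\in H^s$ is the Ukhovskii--Yudovich global well-posedness theorem cited in the paper, so the argument is legitimate but not self-contained. The paper instead proves that $(\mathcal{P}\vepsilon)_\varepsilon$ is a Cauchy family in $L^\infty_TL^2$ (comparing two Mach numbers $\varepsilon>\varepsilon'$) and then verifies that the limit solves \eqref{eq:45}, thereby obtaining the limiting solution as a by-product without invoking its prior existence. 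Your version is shorter; the paper's buys independence from the global theory for \eqref{eq:45}. A small simplification you missed: in the projected equation the term $\mathcal{P}(\cepsilon\nabla\cepsilon)$ vanishes identically, since $\cepsilon\nabla\cepsilon=\frac12\nabla(\cepsilon^2)$ is a pure gradient; treating it as a vanishing error term, as you do, also works but is unnecessary.
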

   \begin{Rema}
   To study the lifespan of the solutions we do not use the approach of \cite{klain-majda82} based on the stability  of the incompressible Euler system. More precisely, it seems that there is no need to use the limit system: we can only work with \eqref{eqs:3} and use the special structure of the vorticity dynamics. On the other hand, the fact of working with the vorticity and by taking advantage of its special structure we can  improve the regularity required for the stability:   we can work in the framework of Soboev spaces $H^s$ with $ s>\frac52.$ We emphasize that in \cite{klain-majda82} the Sobolev regularity must be larger than $\frac72.$
\end{Rema}

The proof of the Theorem \ref{thm1} relies on the use of Strichartz estimates for   the compressible parts $(\mathcal{Q}\vepsilon)$ and  the acoustic ones $(\cepsilon)$, see Corollary \ref{cor11}.  Thus interpolating this result with the energy estimates, see \mbox{Proposition \ref{energy1}}, we  obtain the following result described in \mbox{Proposition \ref{stt1}:} there exists $\sigma>0$ such that
\begin{equation}\label{hdd1}
\|\divergence\vepsilon\|_{L^1_TL^\infty}+\|\nabla\cepsilon\|_{L^1_TL^\infty}\le C_0\varepsilon^{\sigma}(1+T^{2}) e^{CV_\varepsilon(T)},\quad V_\varepsilon(T)=\|(\nabla\vepsilon,\nabla\cepsilon)\|_{L^1_TL^\infty}.
\end{equation} 
It is worthy pointing out that  working with the subcritical regularities is very precious to get the preceding  inequality. However, this argument fails in the critical spaces as we shall see next.
The second ingredient of the proof is the use of  the special structure of the vorticity in the axisymmetric case combined with Beale-Kato-Majda criterion. In what follows, we will briefly discuss the main feature of the axisymmetric flows and for the complete computations, see  the next sections. By the definition,  the velocity takes the form $v(x)=v^r(r,z) e_r+v^z(r,z) e_z$  in the cylindrical basis and consequently the vorticity $\Omega:=\hbox{curl }v$ is given by $\Omega=(\partial_z v^r-\partial_r v^z)e_\theta:=\Omega^\theta e_\theta.$ Therefore, the vorticity dynamics is described by
\begin{equation}\label{dynas1}
\partial_t\Omega_\varepsilon+v_\varepsilon\cdot\nabla\Omega_\varepsilon+\Omega_\varepsilon\divergence \vepsilon=\frac{v^r_\varepsilon}{r}\Omega_\varepsilon.
\end{equation}
It follows that the quantity $\frac{\Omega_\varepsilon}{r}$ satisfies the  transport equation:
$$
\big(\partial_t+v_\varepsilon\cdot\nabla+\divergence \vepsilon\big)\frac{\Omega_\varepsilon}{r}=0.
$$
We observe that this is  analogous  to the vorticity structure for the compressible Euler equations  in space dimensions two.
Performing energy estimates we obtain  an almost conservation laws:  
$$
\Big\|\frac{\Omega_\varepsilon}{r}(t)\Big\|_{L^p}\le \Big\|\frac{\Omega_{0,\varepsilon}}{r}\Big\|_{L^p}e^{(1-\frac1p)\|\divergence\vepsilon\|_{L^1_tL^\infty}},\quad \forall\,p\in[1,\infty].
$$
When the fluid is incompressible  we obtain  exact conservation laws which are sufficient to lead to the global well-posedness. 
Concerning the proof of the incompressible limit in the case of ill-prepared initial data, it is done in a straightforward manner by using  the Strichartz estimates. 

%

In the second part of this paper we will focus on the low Mach number limit for initial data with  critical regularities. In our context a function space $\mathcal{X}$
 is called critical if it is embedded in Lipschitz class and both have the same scaling, for example Besov space $B_{2,1}^{\frac52}$ or more generally $B_{p,1}^{\frac3p+1}$, with $p\in[1,\infty].$ These spaces are till now the largest spaces for the local well-posedness to the incompressible Euler equations or more generally for the quasilinear hyperbolic systems of order one. In \cite{AHK}, it is proven that the system \eqref{eq:45} is globally well-posed for axisymmetric initial data with critical Besov regularity.   Therefore  it is legitimate to try to accomplish the same program for the system \eqref{eqs:3} as in the subcritical case and especially to quantify a lower bound for the lifespan of the solutions. Nevertheless, to get unfiorm bounds with respect to the parameter $\varepsilon$ and remove the penalization term we need to work with critical spaces which are constructed over  the Hilbert space $L^2$ \mbox{like  $B_{2,1}^{\frac52}$} or  other modified spaces as we will see next.   Although one can prove the local well-posedness for the system \eqref{eqs:3} with a uniform time existence, the extension of the results of Theorem \ref{thm1} to the critical case seems to be much more relevant.  We distinguish  at least two principal  difficulties. The first one has a connection with Strichartz estimates  \eqref{hdd1}: in the critical framework the quantities $\|\diver\vepsilon\|_{L^\infty}$ \mbox{and $\|\vepsilon\|_{B_{2,1}^{\frac52}}$} have the same scaling and the interpolation argument used in the proof of  \mbox{Theorem \ref{thm1}} can not work without doing refined improvement.  Indeed, we have no sufficient information about the decay of the remainder series $\sum_{q\geq N}2^{\frac52 q}\|(\Delta_q\vepsilon,\Delta_q\cepsilon)(t)\|_{L^2}$ and there is no   explicit dependence of this decay  with respect to the parameters, $N, t$ and $\varepsilon:$ it seems that  in general the number $N$ can implicitly depend on the variable time and on the parameter $\varepsilon$ and this makes the task so hard. To overcome  this difficulty, we start with the important observation that  any function $f\in B_{2,1}^{\frac52}$ belongs to some heterogeneous  Besov spaces $B_{2,1}^{\frac52,\Psi}$, where $\Psi:[-1,+\infty[\to \RR_+^*$ is  a nondecreasing function depending on the profile of $f$ and satisfying $\lim_{q\to+\infty}\Psi(q)=+\infty$. This latter  space  is defined by the norm 
 $$
\|u\|_{B_{2,1}^{\frac52,\Psi}}=\sum_{q\geq-1}\Psi(q)2^{\frac52 q}\|\Delta_q u\|_{L^2}.
$$
Further details and more discussions  about these spaces will be found in the next section. We point out that 
this function $\Psi$ measures the decay of the remainder series in the space of the initial data and  we will see that the same decay will occur for the solution uniformly with respect to $\varepsilon$. Therefore one can set up the  interpolation argument in the critical framework in the similar way to  the subcritical case but without any explicit dependence of the lifespan  with respect to the initial data.

Concerning the second difficulty, it is related to the Beale-Kato-Majda criterion which is not applicable in the context of critical regularities. In this case the estimate of  $\|\Omegaepsilon(t)\|_{L^\infty}$ is not sufficient to propagate the initial regularities and though one should estimate $\|\Omegaepsilon(t)\|_{B_{\infty,1}^0}$  instead.  We will see next  that many geometric properties of the axisymmetric flows are used  and play a central role in the critical framework. Our result reads as follows.
\begin{thm}\label{thm11}
Let $\{(\vepsiloninit,\cepsiloninit)_{0<\varepsilon\le1}\}$ be  family of axisymmetric initial data such that 
\begin{equation}\label{strong11}
\sum_{q\geq-1}2^{\frac52 q}\sup_{0<\varepsilon\le1}\|(\Delta_q\vepsiloninit,\Delta_q\cepsiloninit)\|_{L^2}<+\infty.
\end{equation}
Then the system \eqref{eqs:3} has a unique solution $(\vepsilon,\cepsilon)\in C([0,T_\varepsilon[; B_{2,1}^{\frac52}),$ with
$$
\lim_{\varepsilon\to0}T_\varepsilon=+\infty.
$$
Moreover the  acoustic parts of the solutions go
   to zero: for every $T<T_\varepsilon$ we have
     \begin{equation*}
   \lim_{\varepsilon\to 0}\|(\diver\vepsilon,\nabla \cepsilon)\|_{L^1_{T} L^\infty}=0.
           \end{equation*}
Assume in addition that
the incompressible parts $(\mathcal{P}\vepsiloninit)$ converge in $L^2$ to some $v_0.$ Then 
   the incompressible parts of the solutions tend to the Kato's 
   solution $v$ of the system $(\ref{eq:45})$:
   \begin{equation*}
     \mathcal{P} \vepsilon \rightarrow v
     \quad \text{ in } L^\infty_{loc} (\RR ^{ +} ; L ^2 ).
   \end{equation*}
  \end{thm}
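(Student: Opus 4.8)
The plan is to mirror the strategy of Theorem \ref{thm1} while replacing every tool that fails in the critical regularity by a heterogeneous Besov counterpart. First I would establish local well-posedness in $B_{2,1}^{\frac52}$ together with a \emph{uniform} lower bound on the lifespan. Writing \eqref{eqs:3} as a symmetric quasilinear hyperbolic system with skew-symmetric penalization $\frac1\varepsilon(\nabla\cepsilon,\divergence\vepsilon)$, the $L^2$-based structure guarantees that the penalization contributes nothing to the energy balance, so a Littlewood--Paley energy method yields a bound of the type $\frac{d}{dt}\|(\vepsilon,\cepsilon)\|_{B_{2,1}^{5/2}}\lesssim \|(\nabla\vepsilon,\nabla\cepsilon)\|_{L^\infty}\|(\vepsilon,\cepsilon)\|_{B_{2,1}^{5/2}}$, with constant independent of $\varepsilon$. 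This reduces the control of the lifespan to controlling the Lipschitz norm $V_\varepsilon(t)=\|(\nabla\vepsilon,\nabla\cepsilon)\|_{L^1_tL^\infty}$.

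The heart of the matter is to bound $V_\varepsilon$, which I would split into its acoustic and incompressible contributions. For the acoustic part $(\mathcal Q\vepsilon,\cepsilon)$ I would invoke the Strichartz estimates of Corollary \ref{cor11} to gain a factor $\varepsilon^\sigma$; the obstruction flagged in the introduction is that the $L^1_TL^\infty$ endpoint scales exactly like $B_{2,1}^{5/2}$, so the naive interpolation of Proposition \ref{stt1} collapses. To circumvent this I would work in the heterogeneous space $B_{2,1}^{\frac52,\Psi}$: choose the profile $\Psi$ adapted to the data so that \eqref{strong11} upgrades to $\sup_\varepsilon\|(\vepsiloninit,\cepsiloninit)\|_{B_{2,1}^{5/2,\Psi}}<\infty$, propagate this refined norm uniformly in $\varepsilon$ by the same energy scheme, and then split the Bernstein sum at a frequency $N$. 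The low frequencies $q<N$ are handled by Strichartz and carry the $\varepsilon^\sigma$ gain with an $N$-dependent constant, while the tail $\sum_{q\ge N}2^{\frac52 q}\|(\Delta_q\vepsilon,\Delta_q\cepsilon)\|_{L^2}\le \Psi(N)^{-1}\|(\vepsilon,\cepsilon)\|_{B_{2,1}^{5/2,\Psi}}$ is made small because $\Psi(N)\to\infty$. Sending $N\to\infty$ first and $\varepsilon\to0$ afterwards recovers a bound of the form \eqref{hdd1}, but now \emph{without} any explicit dependence of the lifespan on the data.

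For the incompressible contribution to $V_\varepsilon$ I would use the axisymmetric vorticity structure. Since the Beale--Kato--Majda criterion is unavailable here, controlling $\|\nabla\vepsilon\|_{L^\infty}$ through $\|\Omegaepsilon\|_{L^\infty}$ no longer suffices, and one must instead estimate $\|\Omegaepsilon\|_{B_{\infty,1}^0}$. I would exploit the transport equation $(\partial_t+\vepsilon\cdot\nabla+\divergence\vepsilon)\frac{\Omegaepsilon}{r}=0$ and the associated almost-conservation of $\|\Omegaepsilon/r\|_{L^p}$, combined with the geometric decomposition of axisymmetric fields and the dynamical interpolation method of \cite{AHK}, to close a logarithmic estimate for $\|\Omegaepsilon\|_{B_{\infty,1}^0}$, all constants uniform in $\varepsilon$ up to the acoustic remainder already controlled by Strichartz. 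Combining the two contributions gives a Gronwall inequality for $V_\varepsilon$ whose blow-up time diverges as $\varepsilon\to0$, which is exactly $\lim_{\varepsilon\to0}T_\varepsilon=+\infty$.

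Finally, once $V_\varepsilon$ is controlled on any fixed $[0,T]$ for $\varepsilon$ small, the decay $\|(\diver\vepsilon,\nabla\cepsilon)\|_{L^1_TL^\infty}\to0$ follows directly from the heterogeneous Strichartz splitting above. The convergence $\mathcal P\vepsilon\to v$ in $L^\infty_{loc}(\RR^+;L^2)$ is then obtained as in Theorem \ref{thm1}: the uniform bounds give weak compactness, the Strichartz decay kills the compressible and acoustic parts in the limit, and passing to the limit in the equation for $\mathcal P\vepsilon$ identifies $v$ as the Kato solution of \eqref{eq:45} with datum $v_0=\lim\mathcal P\vepsiloninit$, uniqueness upgrading the convergence to the full family. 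I expect the main obstacle to be the combination carried out in the second and third steps: propagating the profile $\Psi$ uniformly in $\varepsilon$ while simultaneously closing the $B_{\infty,1}^0$ vorticity estimate without a usable Beale--Kato--Majda criterion, since this is precisely where the critical scaling and the failure of the classical tools interact.
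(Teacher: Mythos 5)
Your proposal follows essentially the same route as the paper: the upgrade of \eqref{strong11} to a uniform bound in $B_{2,1}^{\frac52,\Psi}$ is exactly Corollary \ref{corza1}, and the remainder of your plan (uniform propagation of the heterogeneous norm, frequency splitting with Strichartz for $q<N$ and a $\Psi(N)^{-1}$ tail, the $B_{\infty,1}^0$ vorticity estimate via dynamical interpolation replacing Beale--Kato--Majda, then a Gronwall bootstrap and the subcritical-style incompressible limit) is precisely the paper's proof of Theorem \ref{thm111} together with Theorem \ref{Thm89}, to which the paper reduces Theorem \ref{thm11}. The one slip is your claim that one may send $N\to\infty$ first and $\varepsilon\to0$ afterwards: at fixed $\varepsilon$ the low-frequency term $2^{N}\varepsilon^{\frac1r}$ diverges as $N\to\infty$, so the two parameters must be coupled rather than taken sequentially --- the paper chooses $e^{N}\approx\varepsilon^{-\frac{1}{2r}}$, which makes both terms vanish together as $\varepsilon\to0$ and is what produces the quantitative lower bound on $T_\varepsilon$.
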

  We shall make some useful remarks.
\begin{Remas}{\it 
\begin{enumerate}
\item  In the preceding theorem we need the assumption \eqref{strong11} which  is much stronger than $\sup_{0<\varepsilon\le1}\|(\vepsiloninit,\cepsiloninit)\|_{B_{2,1}^{\frac52}}<+\infty.$ This is crucial to prove a uniform decay for higher frequencies, see Corollary \ref{corza1}.
\item
This  theorem is a special case of a general result that will be discussed in Theorem \ref{thm111}. More precisely, we can extend these results for initial data lying in the heterogeneous Besov spaces $B_{2,1}^{\frac52,\Psi},$ with $\Psi \in\mathcal{U}_\infty,$ see the definition \ref{heter}. When $\Psi$ has  a slow growth at infinity then the lifespan  $T_\varepsilon$ of the solutions is bounded below as follows 
$$
T_\varepsilon \geq C_0\log\log\big|\log{\Psi\big(\log\varepsilon^{-1}\big)}\big|. 
$$
This covers the results of Theorem \ref{thm1}: it suffices to choose  $\Psi(x)=2^{(s-\frac52)x}, s>\frac52.$
\end{enumerate}
}
\end{Remas}
We have  already mentioned that in the critical case  the Belae-Kato-Majda criterion is out of use,  and yet the significant quantity that one should estimate is  $\|\Omegaepsilon(t)\|_{B_{\infty,1}^0}.$ This does not seem  an  easy task due to the nonlinearities in the vorticity equation and  to the lack of the incompressibility of the velocity vector field. It is worthy pointing out that the incompressible case corresponding to the constraint  $\diver \vepsilon=0$ was studied  few years ago  in \cite{AHK} and where the following linear growth was established,
$$
\|\Omegaepsilon(t)\|_{B_{\infty,1}^0}\le C \|\Omegaepsilon(0)\|_{B_{\infty,1}^0} e^{\|\vepsilon^r/r\|_{L^1_tL^\infty}}\Big(1+\int_0^t\|\nabla \vepsilon(\tau)\|_{L^\infty}d\tau  \Big).
$$
One of the main technical parts of this paper is to extend this result for the compressible \mbox{model \eqref{dynas1}.} What we are able to prove is the following
$$
\|\Omegaepsilon(t)\|_{B_{\infty,1}^0}\le C\|\Omegaepsilon(0)\|_{B_{\infty,1}^0}e^{\|\vepsilon^r/r\|_{L^1_tL^\infty}}\Big(1+e^{C\| \vepsilon\|_{L^1_t\textnormal{Lip}}}\|\diver{ \vepsilon}\|_{ L^1_tB_{p,1}^{\frac3p}}^2\big)\Big(1+\int_0^t\|\vepsilon(\tau)\|_{\textnormal{Lip}}d\tau\Big).
$$
where $p\in[1,\infty[$ and $\|\cdot\|_{\textnormal{Lip}}$ stands for Lipschitz norm. We observe that when we \mbox{take $\diver\vepsilon:=0$} then we get the previous linear estimate. The proof of this result uses, but with important modifications,  the approach developed in \cite{AHK} for the incompressible case. This method is based on a suitable splitting of the vorticity and the use of the  dynamical interpolation technics.  The geometry of axisymmetric flows plays a crucial role in the proof,  and we use also some tools of   paradifferential calculus and harmonic analysis. 

The rest of the paper is organized as follows. In section 2, we recall some functional spaces and some of their basic properties. Section 3 is devoted to the establishment of some  energy estimates  in the heterogeneous Besov spaces $B_{2,1}^{s,\Psi}.$ In section 4, we prove some useful Strichartz estimates for the compressible and acoustic parts of the fluid. In section 5, we discuss some basic notions of axisymmetric geometry and we study some important geometric properties of the vorticity and prove some a priori estimates. In section 6, we prove Theorem \ref{thm1}. Section 7 is deserved to  the proof of Theorem \ref{thm11} and especially to the proof of the logarithmic estimate previously described. Finally, in section 8 we establish some elementary lemmas. 
\section{The functional tool box}
    In this section we review some of the basic tools of the paradifferential calculus and recall some elementary  properties of Besov and Lorentz spaces. Before going further into the details we will give some notations that we will intensively use in  this work. 
    
    {\it Notations:} 
    
    $\bullet$  Throughout this paper, $C$ stands for some real positive constant which may be different in each occurrence and $C_0$ a constant which depends on the initial data.
    
   $\bullet$ We shall sometimes alternatively use the notation $X\lesssim Y$ for an inequality of type $X\leq CY$ where $C$ is a constant independent of $X$ and $Y$.

$\bullet$ For any pair of operators $P$ and $Q$ acting in the same  Banach space $\mathcal{X}$, the commutator $[P,Q]$ is given by $PQ-QP$.

\subsection{Littlewood-Paley Theory}
Hereafter, the space dimension is fixed $d=3$ and all the results of this section are valid for any dimension despite we make if necessary the suitable  modifications. To define Littlewood-Paley operators we need to recall  the dyadic partition of the unity, for a proof see for instance \cite{MR2000a:76030}:
there exist two positive radial  functions  $\chi\in \mathcal{D}(\mathbb R^3)$ and  
$\varphi\in\mathcal{D}(\mathbb R^3\backslash{\{0\}})$ such that
\begin{itemize}
\item[\textnormal{i)}]
$\displaystyle{\chi(\xi)
+\sum_{q\geq0}\varphi(2^{-q}\xi)=1}$\; 
$\forall\xi\in\mathbb R^3,$
\item[\textnormal{ii)}]
$\displaystyle{\sum_{q\in\mathbb Z}\varphi(2^{-q}\xi)=1}$\;
if\; $\xi\neq 0,$
\item[\textnormal{iii)}]
$ \textnormal{supp }\varphi(2^{-p}\cdot)\cap
\textnormal{supp }\varphi(2^{-q}\cdot)=\varnothing,$ if  $|p-q|\geq 2$,\\

\item[\textnormal{iv)}]
$\displaystyle{q\geq1\Rightarrow \textnormal{supp}\chi\cap \textnormal{supp }\varphi(2^{-q})=\varnothing}$.
\end{itemize}
For every $u\in{\mathcal S}'(\mathbb R^3)$ we define the dyadic blocks by,
$$
\Delta_{-1}u=\chi(\hbox{D})u;\quad \forall
q\in\mathbb N,\;\Delta_qu=\varphi(2^{-q}\hbox{D})u\; \quad\hbox{and}\quad
S_qu=\sum_{-1\leq j\leq q-1}\Delta_{j}u.
$$
One can easily prove that for every tempered distribution $u,$ the following identity holds true  in the weak sense,
\begin{equation}\label{dr2}
u=\sum_{q\geq -1}\Delta_q\,u.
\end{equation}
By the same way we define the homogeneous operators:
$$
\forall q\in\mathbb Z,\,\quad\dot{\Delta}_{q}u=\varphi(2^{-q}\hbox{D})v\quad\hbox{and}\quad\dot{S}_{q}u=\sum_{j\leq q-1}\dot{\Delta}_{j}u. 
$$
We notice that these operators are of convolution type. For example for $q\in\ZZ,\,$ we have
$$\dot\Delta_{q}u=2^{3q}h(2^q\cdot)\star u,\quad\hbox{with}\quad h\in\mathcal{S},\quad \widehat{h}(\xi)=\varphi(\xi).
$$
For the homogeneous decomposition, the identity (\ref{dr2}) is not true due to the polynomials but we can write,
$$
u=\sum_{q\in\mathbb Z}\dot\Delta_qu
\quad\forall\,u\in 
{\mathcal {S}}'(\mathbb R^3)/{\mathcal{P}}[\mathbb R^3],
$$
where ${\mathcal{P}}[\mathbb R^3]$ is the set of 
polynomials.

We will  make continuous use of Bernstein inequalities (see for \mbox{example {\cite{MR2000a:76030}}}).
\begin{lem}\label{lb}\;
 There exists a constant $C$ such that for $k\in\NN, q\geq-1,$ $1\leq a\leq b$ and for  $u\in L^a(\RR^3)$, 
\begin{eqnarray*}
\sup_{|\alpha|\le k}\|\partial ^{\alpha}S_{q}u\|_{L^b}&\leq& C^k\,2^{q(k+3(\frac{1}{a}-\frac{1}{b}))}\|S_{q}u\|_{L^a},\\
\ C^{-k}2^
{qk}\|\dot{\Delta}_{q}u\|_{L^a}&\leq&\sup_{|\alpha|=k}\|\partial ^{\alpha}\dot{\Delta}_{q}u\|_{L^a}\leq C^k2^{qk}\|\dot{\Delta}_{q}u\|_{L^a}.
\end{eqnarray*}

\end{lem}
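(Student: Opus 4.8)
The plan is to reduce both inequalities to Young's convolution inequality applied to the frequency-localized pieces, using the fact that $S_q u$ and $\dot\Delta_q u$ have (rescaled) compactly supported Fourier transforms. First I would set up a convolution representation. By construction $\widehat{S_q u}$ is supported in a ball of radius $\sim 2^q$, so one may fix once and for all a function $\psi\in\mathcal{D}(\RR^3)$, independent of $q$, with $\psi(2^{-q}\cdot)\equiv 1$ on $\mathrm{supp}\,\widehat{S_q u}$; hence $S_q u=\psi(2^{-q}\mathrm D)S_q u$. Writing the symbol of $\partial^\alpha$ as $(i\xi)^\alpha=2^{q|\alpha|}\Phi_\alpha(2^{-q}\xi)$ with $\Phi_\alpha(\eta):=(i\eta)^\alpha\psi(\eta)$, one obtains the kernel formula
\[
\partial^\alpha S_q u = g_{q,\alpha}\star S_q u,\qquad g_{q,\alpha}(x)=2^{q(3+|\alpha|)}(\partial^\alpha\check\psi)(2^q x),
\]
where $\check\psi\in\mathcal S(\RR^3)$. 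The same representation holds for $\dot\Delta_q u$, now with $\psi$ replaced by some $\tilde\varphi\in\mathcal D(\RR^3\setminus\{0\})$ equal to $1$ on the annulus $\mathrm{supp}\,\varphi$.

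For the upper bounds --- the first inequality and the right-hand estimate of the second --- I would apply Young's inequality $\|g_{q,\alpha}\star S_q u\|_{L^b}\le\|g_{q,\alpha}\|_{L^r}\|S_q u\|_{L^a}$ with $\tfrac1r=1+\tfrac1b-\tfrac1a$. A direct scaling computation gives $\|g_{q,\alpha}\|_{L^r}=2^{q(|\alpha|+3(1/a-1/b))}\|\partial^\alpha\check\psi\|_{L^r}$, since $3(1-\tfrac1r)=3(\tfrac1a-\tfrac1b)$, which is exactly the claimed power of $2^q$. The only remaining point is the $C^k$ dependence of the constant. Rather than bounding $\|\partial^\alpha\check\psi\|_{L^r}$ for all $\alpha$ at once, I would obtain it by iteration: the integrability gain $L^a\to L^b$ for a ball-supported function is performed once (the case $\alpha=0$, costing a universal constant), and the $k$ derivatives are produced one at a time by the single-derivative bound $\|\partial_j g\|_{L^p}\le C\,2^q\|g\|_{L^p}$, valid for any ball-supported $g$ and any $p$, each step costing a factor $C\,2^q$ while preserving the frequency localization. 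Composing these yields the factor $C^k 2^{qk}$, and taking the supremum over $|\alpha|\le k$ gives the stated bound.

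The lower bound of the second inequality is the genuinely delicate one. Here $\dot\Delta_q u$ has frequency support in the annulus $|\xi|\sim 2^q$, bounded away from the origin, so the multinomial identity $|\xi|^{2k}=\sum_{|\alpha|=k}\binom{k}{\alpha}\xi^{2\alpha}$ allows me to invert: with the cutoff $\tilde\varphi\equiv1$ on the annulus I would write the Fourier-multiplier identity
\[
\dot\Delta_q u = 2^{-qk}\sum_{|\alpha|=k}\binom{k}{\alpha}\,h_\alpha(2^{-q}\mathrm D)\,\partial^\alpha\dot\Delta_q u,\qquad h_\alpha(\eta)=\frac{(-i\eta)^\alpha}{|\eta|^{2k}}\,\tilde\varphi(\eta),
\]
so that each $h_\alpha(2^{-q}\mathrm D)$ is convolution with an $L^1$ function of norm $\|\check h_\alpha\|_{L^1}$. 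Young's inequality with $L^1\to L^a$ then produces $\|\dot\Delta_q u\|_{L^a}\le 2^{-qk}\big(\sum_{|\alpha|=k}\binom{k}{\alpha}\|\check h_\alpha\|_{L^1}\big)\sup_{|\alpha|=k}\|\partial^\alpha\dot\Delta_q u\|_{L^a}$. I expect the main obstacle to be precisely the control of this constant: the binomials already sum to $3^k$, and one must check that the kernel norms $\|\check h_\alpha\|_{L^1}$ grow at most geometrically in $k$, uniformly in $\alpha$, which requires tracking the Schwartz seminorms of $h_\alpha$ (built from the homogeneity $-k$ of $(-i\eta)^\alpha/|\eta|^{2k}$ cut off away from the origin). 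Granting this geometric control, the total constant is $\le C^k$ and the lemma follows.
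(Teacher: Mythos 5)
Your proof is correct and is essentially the canonical argument: the paper itself does not prove Lemma \ref{lb} but simply cites \cite{MR2000a:76030}, and your treatment --- the convolution representation $\partial^\alpha S_q u = g_{q,\alpha}\star S_q u$ with Young's inequality and scaling for the direct estimates, plus the multinomial inversion $|\xi|^{2k}=\sum_{|\alpha|=k}\binom{k}{\alpha}\xi^{2\alpha}$ on the annulus for the reverse one --- is exactly the proof found in that reference. The one point you leave ``granted,'' namely $\|\check h_\alpha\|_{L^1}\le C^k$ uniformly in $|\alpha|=k$, does hold by the route you sketch: bound $(1+|x|^2)^2|\check h_\alpha(x)|$ by $\|(1-\Delta_\eta)^2 h_\alpha\|_{L^1}$, and observe that derivatives of order at most $4$ of $(-i\eta)^\alpha|\eta|^{-2k}$ on the annulus $\mathrm{supp}\,\tilde\varphi$ are bounded by a constant to the power $k$, so the weighted kernel norm grows only geometrically.
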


Let us now introduce Bony's decomposition  \cite{b} which is the basic tool of  the paradifferential calculus. In the product $uv$ of two distributions, which is not always well-defined, we distinguish three parts:
$$
uv=T_u v+T_v u+\mathcal{R}(u,v),
$$
where $T_uv$ is the paraproduct of $v$ by $u$ and $\mathcal{R}(u,v)$ the remainder term. They are defined as follows
\begin{eqnarray*}
T_u v=\sum_{q}S_{q-1}u\Delta_q v, \quad\quad \mathcal{R}(u,v)=
\sum_{q}\Delta_qu \widetilde \Delta_{q}v,\quad {\widetilde \Delta}_{q}=\sum_{i=-1}^{1}\Delta_{q+i}.
\end{eqnarray*}
\subsection{Usual and heterogeneous Besov spaces}
Now we will  define the nonhomogeneous  and homogeneous Besov spaces by using Littlewood-Paley operators. 
 Let $(p,r)\in[1,+\infty]^2$ and $s\in\mathbb R,$ then the nonhomogeneous  Besov 
\mbox{space $B_{p,r}^s$} is 
the set of tempered distributions $u$ such 
$$
\|u\|_{B_{p,r}^s}:=\Big( 2^{qs}
\|\Delta_q u\|_{L^{p}}\Big)_{\ell^{r}}<+\infty.
$$
We remark that the usual Sobolev space $H^s$ coincides with  $B_{2,2}^s$ for $s\in\RR$  and the H\"{o}lder space $C^s,$ coincides with $B_{\infty,\infty}^s$ when $s$ is not an integer. The homogeneous Besov space $\dot B_{p,r}^s$ is defined as the set of  $u\in\mathcal{S}'(\RR^d)$ up to polynomials such that
$$
\|u\|_{\dot B_{p,r}^s}:=\Big( 2^{qs}
\|\dot\Delta_q u\|_{L^{p}}\Big)_{\ell ^{r}(\ZZ)}<+\infty.
$$
The following embeddings, valid for both homogeneous and nonhomogeneous cases, are an easy consequence of  Bernstein inequalities, see for instance \cite{MR2000a:76030}, 
$$
B^s_{p_1,r_1}\hookrightarrow
B^{s+3({1\over p_2}-{1\over p_1})}_{p_2,r_2}, \qquad p_1\leq p_2\quad and \quad  r_1\leq r_2.
$$

{Let $T>0$} \mbox{and $\rho\geq1,$} we denote by $L^\rho_{T}B_{p,r}^s$ the space of distributions $u$ such that 
$$
\|u\|_{L^\rho_{T}B_{p,r}^s}:= \Big\|\Big( 2^{qs}
\|\Delta_q u\|_{L^p}\Big)_{\ell ^{r}}\Big\|_{L^\rho_{T}}<+\infty.$$
Now we will introduce the heterogeneous Besov spaces which are an extension of the classical Besov spaces.
\begin{defi}\label{heter}
Let $\Psi:[-1,+\infty[\to\RR_+^*$ be a given function.

(i)
 We say that $\Psi$ belongs to the class $\mathcal{U}$ if the following conditions are satisfied:
\begin{enumerate}
\item $\Psi$ is a nondecreasing function.
\item There exists  $C>0$ such that
$$
\sup_{q\in\NN\cup\{-1\}}\frac{\Psi(q+1)}{\Psi(q)}\leq C.
$$

\end{enumerate}
(ii) We define the class $\mathcal{U}_\infty$ by the set of function  $\Psi\in \mathcal{U}$ satisfying 
$\displaystyle{\lim_{x\to +\infty}\Psi(x)=+\infty}$.

(iii) Let $s\in\RR, p,r\in [1,+\infty]$ and $\Psi\in \mathcal{U}$. We define the heterogeneous Besov space $B_{p,r}^{s,\Psi}$ as follows:
$$
u\in B_{p,r}^{s,\Psi} \Longleftrightarrow \|u\|_{B_{p,r}^{s,\Psi}}=\Big( \Psi(q)2^{qs} \|\Delta_q u\|_{L^p}\Big)_{\ell^r}<+\infty.
$$

\end{defi}
\begin{Remas}
\begin{enumerate}
\item We observe that when we take $\Psi(x)=2^{\alpha x}$ with $\alpha\in\RR_+$,  the space $B_{p,r}^{s,\Psi}$ reduces to the classical Besov space $B_{p,r}^{s+\alpha}.$ 
\item The condition $(2)$ seems to be  necessary for the definition of $B_{p,r}^{s,\Psi}:$  it allows to get a definition which is independent of the choice of the dyadic partition. 

\end{enumerate}
\end{Remas}
The following lemma is important for the proof of Theorem \ref{thm11}. Roughly speaking, we will prove that  any element of a given Besov space is always more regular than the prescribed regularity. 
\begin{lem}\label{lemmm1}
Let $s\in\RR, p\in [1,+\infty], r\in[1,+\infty[$ and $f\in B_{p,r}^s$. Then there exists a function $\Psi$ belonging to $\mathcal{U}_\infty$ such that $f\in B_{p,r}^{s,\Psi}.$

\end{lem}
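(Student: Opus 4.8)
The plan is to reduce the statement to a purely sequential fact about $\ell^r$ (recall that here $r<\infty$) and then to exhibit the weight $\Psi$ explicitly. Writing $u_q=2^{qs}\|\Delta_q f\|_{L^p}$, the hypothesis $f\in B_{p,r}^s$ means exactly that $(u_q)_{q\ge -1}\in\ell^r$, while the desired conclusion $f\in B_{p,r}^{s,\Psi}$ means $\big(\Psi(q)u_q\big)_{q\ge-1}\in\ell^r$. So I must produce a nondecreasing $\Psi$ with $\Psi(q)\to+\infty$ and a doubling bound $\Psi(q+1)\le C\Psi(q)$, for which the reweighted sequence still lies in $\ell^r$. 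Phrased abstractly: a summable sequence of nonnegative reals can always be multiplied by a weight tending to infinity, with controlled growth, without destroying summability.

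First I would dispose of the degenerate case in which $u_q=0$ for all but finitely many $q$; then $\Psi(q)=q+2$ works, since the weighted $\ell^r$ norm is a finite sum and $\Psi$ is plainly nondecreasing, divergent and doubling. So I may assume $u_q\neq0$ for infinitely many $q$, which guarantees that the tail $\rho_q:=\sum_{j\ge q}u_j^{\,r}$ is strictly positive for every $q$ and decreases to $0$. The natural first guess is $w_q:=\rho_q^{-1/2}$: it is nondecreasing with $w_q\to+\infty$, and a telescoping comparison with an integral gives $\sum_q w_q u_q^{\,r}=\sum_q \rho_q^{-1/2}(\rho_q-\rho_{q+1})\le \sum_q\int_{\rho_{q+1}}^{\rho_q}t^{-1/2}\,dt=2\rho_{-1}^{1/2}<\infty$, using that $t^{-1/2}\ge\rho_q^{-1/2}$ on $[\rho_{q+1},\rho_q]$. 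Hence $(w_q^{1/r}u_q)\in\ell^r$.

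The hard part is the doubling condition, which this naive choice need not satisfy, since $w_{q+1}/w_q=(\rho_q/\rho_{q+1})^{1/2}$ can be arbitrarily large when a single dyadic block carries most of the tail. To repair this I would regularize $w$ by capping its growth rate, setting $\tilde w_q:=\min_{-1\le j\le q}\,w_j\,2^{\,q-j}$. Taking $j=q$ shows $\tilde w_q\le w_q$, so summability is preserved; moreover splitting off the index $j=q+1$ gives $\tilde w_{q+1}=\min(2\tilde w_q,\,w_{q+1})$, which yields at once monotonicity $\tilde w_{q+1}\ge\tilde w_q$ (because $w_{q+1}\ge w_q\ge\tilde w_q$) and the doubling bound $\tilde w_{q+1}\le2\tilde w_q$. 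The only remaining point is $\tilde w_q\to+\infty$: if $\tilde w_q\le B$ for all $q$, then the minimizing index $j(q)$ satisfies $w_{j(q)}2^{\,q-j(q)}\le B$, and since $w_{-1}>0$ this forces $q-j(q)$ to stay bounded, whence $j(q)\to+\infty$ and $w_{j(q)}\to+\infty$, contradicting $w_{j(q)}\le B$. Finally I set $\Psi(q):=\tilde w_q^{1/r}$; then $\Psi$ is nondecreasing, tends to $+\infty$, and obeys $\Psi(q+1)/\Psi(q)\le2^{1/r}$, so $\Psi\in\mathcal U_\infty$, while $\sum_q\big(\Psi(q)u_q\big)^r=\sum_q\tilde w_q u_q^{\,r}<\infty$, that is $f\in B_{p,r}^{s,\Psi}$, as required.
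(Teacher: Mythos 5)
Your proof is correct, and its first half coincides with the paper's: the paper reduces the lemma to the same sequential statement (with $c_q=u_q^r$ in your notation) and takes the same initial weight, namely the inverse square root of the tails $b_q=\big(\sum_{n\ge q}c_n\big)^{-1/2}$, proving $\sum_q b_qc_q\le 2\big(\sum_q c_q\big)^{1/2}$ by exactly your piecewise-constant/integral comparison. Where you genuinely differ is the regularization enforcing the doubling condition. The paper runs the recursion $a_{-1}=b_{-1}$, $a_{q+1}=\frac12\big(a_q+\min(b_{q+1},2a_q)\big)$, and then establishes four facts separately: $a_q\le b_q$ by induction (so summability survives), monotonicity, the ratio bound $1\le a_{q+1}/a_q\le\frac32$, and divergence by contradiction (a finite limit $\ell>0$ would force $\ell=\frac32\ell$). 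Your inf-convolution $\tilde w_q=\min_{-1\le j\le q}w_j2^{q-j}$ is a closed-form cousin of the same idea — the unaveraged recursion $a_{q+1}=\min(b_{q+1},2a_q)$ generates precisely your $\tilde w_q$ — and it makes monotonicity and doubling immediate from the identity $\tilde w_{q+1}=\min(2\tilde w_q,w_{q+1})$, with divergence obtained by your direct minimizing-index argument instead of a limit contradiction. Two small advantages of your write-up: the doubling constant is explicit ($2^{1/r}$ after taking $r$-th roots), and you handle the degenerate case of finitely many nonvanishing blocks, which the paper silently excludes by assuming its sequence $(c_q)$ strictly positive (otherwise the tails, hence $b_q$, are undefined).
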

\begin{proof}
We observe that the proof reduces to the following statement: assume that a strictly positive sequence $(c_q)_{q\geq-1}$ satisfies $\sum_{q}c_q<+\infty,$ then there exists a nondecreasing sequence $(a_q)_{q\geq-1}$ satisfying 
\begin{equation}\label{passp1}
\lim_{q\to+\infty}a_q=+\infty,\quad \sup_{q\in\NN\cup\{-1\}} \frac{a_{q+1}}{a_q}\le C
\end{equation}
and such that
\begin{equation}\label{passp2}
\sum_{q\geq-1} a_q c_q<+\infty.
\end{equation}
Let $b_q=\big({{\sum_{n\geq q} c_n}}\big)^{-\frac12}$, then $(b_q)_{q\geq-1}$ is a nondecreasing sequence going to infinity. Moreover
\begin{equation}\label{hddsd1}
\sum_{q\geq-1} b_q c_q\le 2\big({\sum_{q\geq-1}c_q}\big)^{\frac12}.
\end{equation}
Indeed, we introduce the piecewise function $f:[-1,+\infty[\to\RR_+$ defined by
$$
f(x)=c_q, \quad\hbox{for}\quad x\in[q,q+1[ \quad\hbox{and}\quad q\in\NN\cup\{-1\}.
$$ 
Then we get by obvious computations,
\begin{eqnarray*}
\sum_{q\geq-1} b_q c_q&=&\sum_{q\geq-1} \frac{\int_q^{q+1}f(x)dx}{\big({\int_q^{+\infty}f(x)dx}\big)^{\frac12}}\\
&\le&\sum_{q\geq-1} {\int_q^{q+1}\frac{f(x)}{\big({\int_x^{+\infty}f(y)dy}\big)^{\frac12}}dx}\\
&\le&\int_{-1}^{+\infty}\frac{f(x)}{\big({\int_x^{+\infty}f(y)dy}\big)^{\frac12}}dx\\
&\le& 2\Big({\int_{-1}^{+\infty}f(x)dx}\Big)^{\frac12}=2\big({\sum_{q\geq-1}c_q}\big)^{\frac12}.
\end{eqnarray*}
Now we will construct by recursive procedure a sequence $(a_q)_{q\geq-1}$ satisfying \eqref{passp1} and \eqref{passp2}, and such that 
$$
\sum_{q\geq-1}a_qc_q\le 2\big({\sum_{q\geq-1}c_q}\big)^{\frac12}.
$$
Let $(a_q)_{q\geq-1}$ be the sequence defined by the following recursive formula
\begin{equation}
   \label{eqZZ1}
   \left\{
     \begin{array}{l}
      a_{q+1}=\frac12\big(a_q+\min(b_{q+1},2 a_q)\big) \\
       a_{-1}=b_{-1}.
     \end{array}
   \right.
\end{equation}
We will show first  that $a_q\le b_q$. This is true for $q=-1$ and since $(b_q)_{q\geq-1}$ is nondecreasing then 
$$
a_{q+1}-b_{q+1}\le \frac12(a_q-b_{q+1})\le\frac12(a_q-b_q).
$$
Thus we find by the principle of recurrence that $a_q\le b_q, \forall q\geq-1.$  From this property and \eqref{hddsd1} we get the convergence of the series $\sum_{q\geq-1} a_q c_q$ and more precisely
$$
\sum_{q\geq-1} a_q c_q\le 2\big({\sum_{q\geq-1}c_q}\big)^{\frac12}.
$$ 
Let us now prove that  the sequence $(a_q)_{q\geq-1}$ is nondecreasing. Indeed, by easy computations and from the nondecreasing property of $(b_q)_{q\geq-1}$ and the fact $a_q\le b_q$ we get
\begin{eqnarray*}
a_{q+1}-a_q&=&\frac12\big(\min(b_{q+1},2a_q)-a_q\big)\\
&=&\frac12\min(b_{q+1}-a_q,a_q)\\
&\geq& \frac12\min(b_{q}-a_q,a_q)\geq0.
\end{eqnarray*}
Now we will prove that $(a_q)_{q\geq-1}$ converges to $+\infty$, otherwise it will converge to a finite real \mbox{number $\ell>0$.} Using the relation \eqref{eqZZ1} combined with  the fact that $\lim_{q\to+\infty}b_q=+\infty$ yields necessary to $\ell=\frac32\ell,$ which contradicts the fact that $\ell\in]0,+\infty[.$ On the other hand, we have 
$$
1\le\frac{a_{q+1}}{a_q}\le \frac32\cdot
$$
This ends the proof of all the properties of the sequence $(a_q)_{q\geq-1}$.
\end{proof}
\begin{Rema}
From the proof of Lemma \ref{lemmm1} we may easily check that we can replace the value of $b_q$ by any expression $(\sum_{n\geq q}c_n)^{-\alpha},$ with $\alpha<1.$ The case $\alpha=1$ is not true, at least for   any convergent geometric series.
\end{Rema}
As a consequence we get the following result.
\begin{cor}\label{corza1}
Let $s\in\RR, p\in[1,\infty], r\in[1,\infty[$ and $(f_\varepsilon)_{0<\varepsilon\le1}$ be a family of smooth functions satisfying
$$
\Big( 2^{qs}\sup_{0<\varepsilon\le1}\|\Delta_qf_\varepsilon\|_{L^p}\Big)_{\ell^r}<+\infty
$$
Then there exists $\Psi\in \mathcal{U}_\infty$ such that 
$$
f_\varepsilon\in B_{p,r}^{s,\Psi},\quad\forall\,\, \varepsilon\in]0,1].
$$
\end{cor}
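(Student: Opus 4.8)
The plan is to reduce the corollary to the sequence-level statement established inside the proof of Lemma \ref{lemmm1}, applied not to a single function but to the \emph{majorant sequence}
$$
d_q := 2^{qs}\sup_{0<\varepsilon\le1}\|\Delta_q f_\varepsilon\|_{L^p},\qquad q\ge -1,
$$
which by hypothesis belongs to $\ell^r$. The essential observation is that a single weight $\Psi$ built from $(d_q)$ works simultaneously for every member of the family, because $\|\Delta_q f_\varepsilon\|_{L^p}\le \sup_{0<\varepsilon'\le1}\|\Delta_q f_{\varepsilon'}\|_{L^p}$ for each fixed $q$, uniformly in $\varepsilon$. Running the construction \emph{after} taking the supremum in $\varepsilon$ is precisely what yields an $\varepsilon$-independent $\Psi$; applying Lemma \ref{lemmm1} to each $f_\varepsilon$ separately would only produce weights depending on $\varepsilon$.

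Concretely, since $r<\infty$ I would set $c_q := d_q^r$, so that $\sum_{q\ge-1} c_q = \|(d_q)\|_{\ell^r}^r<+\infty$. If some $d_q$ vanish I first replace $c_q$ by $c_q+2^{-q}$, which remains summable and is strictly positive, so that the hypothesis of the sequence statement in Lemma \ref{lemmm1} is satisfied. That statement then provides a nondecreasing sequence $(a_q)_{q\ge-1}$ with $\lim_{q\to+\infty}a_q=+\infty$, $\sup_{q}a_{q+1}/a_q\le C$, and $\sum_{q\ge-1}a_q c_q<+\infty$.

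I would then define $\Psi$ on $[-1,+\infty[$ by $\Psi(x):=a_{\lfloor x\rfloor}^{1/r}$, so that $\Psi(q)=a_q^{1/r}$ at the integers. The three defining properties of $\mathcal{U}_\infty$ transfer at once: $\Psi$ is nondecreasing, $\Psi(q+1)/\Psi(q)=(a_{q+1}/a_q)^{1/r}\le C^{1/r}$, and $\Psi(q)\to+\infty$. Finally, for every $\varepsilon\in\,]0,1]$,
$$
\|f_\varepsilon\|_{B_{p,r}^{s,\Psi}}^r=\sum_{q\ge-1}\Psi(q)^r\,2^{qsr}\|\Delta_q f_\varepsilon\|_{L^p}^r\le \sum_{q\ge-1}a_q\,d_q^r\le \sum_{q\ge-1}a_q c_q<+\infty,
$$
and the right-hand side does not depend on $\varepsilon$. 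This gives $f_\varepsilon\in B_{p,r}^{s,\Psi}$ for all $\varepsilon$, as claimed.

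There is no serious obstacle here beyond the exponent bookkeeping, which is also the reason the statement is restricted to $r<\infty$: one cannot feed the $\ell^r$ sequence $(d_q)$ into Lemma \ref{lemmm1} directly, but must pass to the summable sequence $(d_q^r)$ and compensate by taking the $r$-th root in the definition of $\Psi$. For $r=\infty$ this device is unavailable, which is consistent with the hypothesis $r\in[1,\infty[$.
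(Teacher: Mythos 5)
Your proof is correct and follows essentially the same route as the paper: the paper likewise sets $c_q:=2^{qsr}\sup_{0<\varepsilon\le1}\|\Delta_qf_\varepsilon\|_{L^p}^r$, invokes the sequence statement from Lemma \ref{lemmm1}, and takes $\Psi$ with $\Psi^r(q)=a_q$, i.e.\ $\Psi(q)=a_q^{1/r}$. Your additional care about possibly vanishing terms and the explicit verification of the $\mathcal{U}_\infty$ properties of $\Psi$ are details the paper leaves implicit, not a different argument.
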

\begin{proof}
We set $c_q:= 2^{qsr}\sup_{0<\varepsilon\le1}\|\Delta_qf_\varepsilon\|_{L^p}^r$ then $\sum_{q}c_q<+\infty$ and thus we can use Lemma \ref{lemmm1}. Therefore there exists $\Psi\in \mathcal{U}_\infty$ such that
$$
\sum_{q\geq-1} \Psi^r(q)2^{qsr}\sup_{0<\varepsilon\le1}\|\Delta_qf_\varepsilon\|_{L^p}^r<+\infty.
$$
This achieves the proof.
\end{proof}
\subsection{Lorentz spaces}
Let us now introduce the Lorentz spaces that   will be used especially to analyze  the critical regularities. There are two ways to define these spaces: by rearrangement procedure or by using real interpolation theory. We will briefly give both descriptions. For any measurable function $f$ we define its nonincreasing rearrangement  by
$$
f^{\ast}(t):= \inf\Big\{s,\;\mu\big(\{ x,\; |f (x)| > s\}\big)\leq t\Big\},
$$
where $\mu$ denotes the usual Lebesgue measure.
For $(p,q)\in [1,+\infty]^2,$ the Lorentz space $L^{p,q}$ is the set of  functions $f$ 
such that $\|f\|_{L^{p,q}}<\infty,$ with
$$
\|f\|_{L^{p,q}}:=\left\lbrace
\begin{array}{l}\displaystyle
\Big(\int_0^\infty[t^{1\over p}f^{\ast}(t)]^q{dt\over t}\Big)^{1\over q}, \quad \hbox{for}\;1\leq q<\infty\\
\displaystyle\sup_{t>0}t^{1\over p}f^{\ast}(t),\quad \hbox{for}\;q=\infty.
\end{array}
\right.
$$
The second definition of Lorentz spaces which is equivalent to the first one is given   by real interpolation theory:
$$
(L^{p_0},L^{p_1})_{(\theta,q)}=L^{p,q},
$$
where
$
1\leq p_0<p<p_1\leq\infty,
$ and 
$\theta$ satisfy ${1\over p}={1-\theta\over p_0}
+{\theta\over p_1}$ and $1\leq q\leq\infty$.
These spaces inherited from Lebesgue spaces $L^p$  the stability property of the multiplication by bounded function:
\begin{equation}\label{imbed0}
\|uv\|_{L^{p,q}}
\le
C\|u\|_{L^\infty}\|v\|_{L^{p,q}}.
\end{equation}
On the other hand we have the following embeddings,
\begin{equation}
\label{imbed23}L^{p,q}\hookrightarrow L^{p,q'},\forall\, 1\leq p\leq\infty; 1\leq q\leq q'\leq \infty\quad \hbox{and}\quad L^{p,p}=L^p.
\end{equation}

\section{Energy estimates}
This section is devoted to the establishment of some  energy estimates for the system \eqref{eqs:3} in the framework of  heterogeneous  \mbox{Besov spaces $B_{2,r}^{s,\Psi}$}. This is an extension of   the classical estimates known for the usual  Besov spaces $B_{2,r}^s$ and whose proof   can be found for example \mbox{in \cite{MR1975081}.} 
 \begin{prop}\label{energy1}
Let $(\vepsilon,\cepsilon)$  be a smooth solution of \eqref{eqs:3}  and $\Psi\in \mathcal{U}$, see the Definition \ref{heter}. Then
\begin{enumerate}
\item $L^2-$estimate: there exists $C>0$ such that $\forall t\geq0$
$$
\|(\vepsilon,\cepsilon)(t)\|_{L^2}\le C\|(\vepsiloninit,\cepsiloninit)\|_{L^2}e^{C\|\divergence\vepsilon\|_{L^1_tL^\infty}}.
$$
\item Besov estimates: for $s>0,\alpha\geq0, r\in[1,+\infty],$ there exists $C>0$ such that
$$
\|(\vepsilon,\cepsilon)(t)\|_{B_{2,r}^{s,\Psi}}\le C\|(\vepsiloninit,\cepsiloninit)\|_{B_{2,r}^{s,\Psi}}\,e^{CV_\varepsilon(t)}
$$ 
with
$$V_\varepsilon(t):=\|\nabla\vepsilon\|_{L^1_tL^\infty}+\|\nabla \cepsilon\|_{L^1_tL^\infty}. 
$$
The above estimate holds true for homogeneous Besov spaces $\dot{B}_{2,r}^s.$
\end{enumerate}
\end{prop}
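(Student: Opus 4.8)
The plan is to treat \eqref{eqs:3} as a symmetric hyperbolic system whose singular penalization is skew-adjoint in $L^2$, and to run energy estimates frequency by frequency, absorbing the loss of derivatives into commutators. Writing $U_\varepsilon=(\vepsilon,\cepsilon)$, the system takes the form $\partial_tU_\varepsilon+\sum_jA_j(U_\varepsilon)\partial_jU_\varepsilon+\frac1\varepsilon\sum_jB_j\partial_jU_\varepsilon=0$ with $A_j(U_\varepsilon)=\vepsilon^j\,I+\bar\gamma\cepsilon\,M_j$, where the constant matrices $M_j=B_j$ encode the $\nabla\cepsilon$/$\divergence\vepsilon$ coupling; all the $A_j$ and $B_j$ are symmetric. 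Two structural facts drive everything. First, since the $B_j$ are constant and symmetric, $\frac1\varepsilon\sum_jB_j\partial_j$ is skew-adjoint in $L^2$ and commutes with each $\Delta_q$; hence the penalization contributes nothing to the $L^2$ energy of $U_\varepsilon$ or of $\Delta_qU_\varepsilon$, which is exactly what makes the bounds uniform in $\varepsilon$. Second, symmetry of the $A_j$ gives the identity $\langle\sum_jA_j\partial_jw,w\rangle=-\frac12\int(\sum_j\partial_jA_j)\,w\cdot w$, so every quasilinear contribution reduces, after one integration by parts, to the energy times first derivatives of $U_\varepsilon$.

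For part (1), I would pair the two equations of \eqref{eqs:3} with $\vepsilon$ and $\cepsilon$ and add them. The penalization cancels by the first fact (equivalently $\frac1\varepsilon[\langle\nabla\cepsilon,\vepsilon\rangle+\langle\divergence\vepsilon,\cepsilon\rangle]=0$ after integrating by parts), while the second fact turns the transport and $\bar\gamma$-coupling terms into $\frac12\int\divergence\vepsilon\,(|\vepsilon|^2+\cepsilon^2)-\frac{\bar\gamma}2\int\divergence\vepsilon\,\cepsilon^2$; notice that only $\divergence\vepsilon$ survives, not $\nabla\cepsilon$. Bounding this by $C\|\divergence\vepsilon\|_{L^\infty}\|U_\varepsilon\|_{L^2}^2$ and invoking Gr\"onwall's lemma gives the claimed $L^2$ bound.

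For part (2), I would apply $\Delta_q$ and split $\Delta_q(\sum_jA_j\partial_jU_\varepsilon)=\sum_jA_j\partial_j\Delta_qU_\varepsilon+\sum_j[\Delta_q,A_j(U_\varepsilon)]\partial_jU_\varepsilon$. Pairing with $\Delta_qU_\varepsilon$, the penalization drops out, the good term is controlled by $C\|\nabla U_\varepsilon\|_{L^\infty}\|\Delta_qU_\varepsilon\|_{L^2}^2$ through the symmetric identity, and after dividing by $\|\Delta_qU_\varepsilon\|_{L^2}$ and integrating in time one gets $\|\Delta_qU_\varepsilon(t)\|_{L^2}\le\|\Delta_qU_\varepsilon(0)\|_{L^2}+C\int_0^t\|\nabla U_\varepsilon\|_{L^\infty}\|\Delta_qU_\varepsilon\|_{L^2}+\int_0^t\|\sum_j[\Delta_q,A_j]\partial_jU_\varepsilon\|_{L^2}$. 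I would then multiply by $\Psi(q)2^{qs}$, take the $\ell^r$ norm in $q$ (using Minkowski to exchange $\int_0^t$ with $\ell^r$), feed in the tame commutator estimate, and close with Gr\"onwall's lemma to obtain $\|U_\varepsilon(t)\|_{B_{2,r}^{s,\Psi}}\le C\|U_\varepsilon(0)\|_{B_{2,r}^{s,\Psi}}e^{CV_\varepsilon(t)}$.

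The crux, and the only place the heterogeneous weight intervenes, is the commutator bound $\big\|\big(\Psi(q)2^{qs}\|\sum_j[\Delta_q,A_j(U_\varepsilon)]\partial_jU_\varepsilon\|_{L^2}\big)_q\big\|_{\ell^r}\lesssim\|\nabla U_\varepsilon\|_{L^\infty}\|U_\varepsilon\|_{B_{2,r}^{s,\Psi}}$. Since $A_j$ is affine in $U_\varepsilon$, this commutator is bilinear and I would derive the bound from the classical commutator estimate in $B_{2,r}^s$ by carrying the weight $\Psi(q)$ through Bony's decomposition: all frequency interactions there are confined to $|q-q'|\le N_0$ for a fixed $N_0$, so condition (2) of Definition \ref{heter} ($\Psi(q+1)\le C\Psi(q)$, hence $\Psi(q)\simeq\Psi(q')$ when $|q-q'|\le N_0$) allows the weight to be moved at the cost of a harmless constant. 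The assumption $s>0$ is used precisely to make the remainder $\mathcal{R}$ in Bony's decomposition summable. The homogeneous statement follows verbatim with $\dot\Delta_q$ and summation over $q\in\ZZ$, and I expect this weighted commutator estimate to be the main technical step.
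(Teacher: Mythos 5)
Your proposal follows essentially the same route as the paper: part (1) is the paper's computation (the penalization cancels after integration by parts, only $\divergence\vepsilon$ survives, then Gr\"onwall), and part (2) is the paper's frequency-localized energy estimate, in which your weighted commutator bound is precisely the paper's Lemma \ref{proz1}, proved there by Bony's decomposition much as you outline. The symmetric-system formalism with the matrices $A_j$, $B_j$ is only a repackaging of the same cancellations and commutators $[\Delta_q,\vepsilon\cdot\nabla]$, $[\Delta_q,\cepsilon]$.

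One point in your treatment of the crux lemma needs repair. You assert that in Bony's decomposition ``all frequency interactions are confined to $|q-q'|\le N_0$'', so that the doubling condition (2) of Definition \ref{heter} alone suffices to move the weight $\Psi(q)$. That is true for the two paraproduct pieces, but false for the remainder: in $[\Delta_q,\mathcal{R}(v,\nabla)]u$ the active frequencies satisfy only $j\ge q-4$, with $j-q$ unbounded. There the inequality one actually needs is $\sum_{q\le j+4}2^{qs}\Psi(q)\lesssim 2^{js}\Psi(j)$, and this uses both $s>0$ (geometric summability, which you do note) \emph{and} the monotonicity of $\Psi$, i.e.\ property (1) of the class $\mathcal{U}$. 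Condition (2) alone is not enough: it prevents $\Psi$ from growing too fast but allows arbitrary decay, and for $\Psi(q)=2^{-Mq}$ with $M>s$ (which satisfies (2)) the left-hand side is of order one while the right-hand side tends to zero, so the weighted commutator estimate genuinely fails. This is exactly how the paper's proof of Lemma \ref{proz1} closes the remainder term: Fubini in $(q,j)$, then property (1) together with $s>0$ to absorb $\sum_{q\le j+4}2^{qs}\Psi(q)$ into $2^{js}\Psi(j)$. With this correction your argument goes through as stated, including the homogeneous case.
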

\begin{proof}
{\bf$(1)$} Taking the $L^2$ inner product of the first equation of  \eqref{eqs:3} with  $\vepsilon$ and integrating by parts 
$$
\frac12\frac{d}{dt}\|\vepsilon(t)\|_{L^2}^2-\frac12\int_{\RR^3}\divergence\vepsilon(|\vepsilon|^2+\bar\gamma\cepsilon^2)dx-\frac1\varepsilon\int_{\RR^3}\cepsilon\divergence\vepsilon dx=0.
$$
Multiplying the second equation of \eqref{eqs:3} by $\cepsilon$ and integrating by parts
$$
\frac12\frac{d}{dt}\|\cepsilon(t)\|_{L^2}^2+(\bar\gamma-\frac12)\int_{\RR^3}(\divergence\vepsilon)\cepsilon^2dx+\frac1\varepsilon\int_{\RR^3}\cepsilon\divergence\vepsilon dx=0.
$$
Thus summing up these identities yields to
\begin{eqnarray*}
\frac{d}{dt}\big(\|\vepsilon(t)\|_{L^2}^2+\|\vepsilon(t)\|_{L^2}^2\big)&=&\int_{\RR^3}\divergence\vepsilon\, \big(|\vepsilon|^2+(1-\bar\gamma)\cepsilon^2\big) dx\\
&\lesssim&\|\divergence\vepsilon(t)\|_{L^\infty}\big(\|\vepsilon(t)\|_{L^2}^2+\|\cepsilon(t)\|_{L^2}^2\big).
\end{eqnarray*}
The result follows easily from  Gronwall's lemma.

{\bf$(2)$} We will use  localize in frequency    the equations and use some results on the commutators. \mbox{Let $q\geq-1$} and set $f_q:=\Delta_q f$, then 
\begin{equation}
   \label{eqs:13}
    \left\{
     \begin{array}{l}
       \partial _t \vepsilonq  + \vepsilon \cdot \nabla
       \vepsilonq + \bar\gamma\cepsilon \nabla \cepsilonq +
       \oneover{ \varepsilon } \nabla \cepsilonq
       =-[\Delta_q,\vepsilon\cdot\nabla]\vepsilon-\bar\gamma[\Delta_q,\cepsilon]\nabla\cepsilon:=T_{\varepsilon,q}^1 \\[0.3ex]
       \partial _t \cepsilonq + \vepsilon \cdot \nabla
       \cepsilonq +\bar\gamma \cepsilon \divergence
       \vepsilonq + \oneover{ \varepsilon } \divergence \vepsilonq
       =-[\Delta_q,\vepsilon\cdot\nabla]\cepsilon-\bar\gamma[\Delta_q,\cepsilon]\divergence\vepsilon:=T_{\varepsilon,q}^2 \\[0.3ex]
     \end{array}
   \right.
\end{equation}
Denote $w_{\varepsilon,q}(t):=|\cepsilonq(t)|^2+|\vepsilonq(t)|^2$, then taking the $L^2$ inner product in a similar way to the first part $(1)$ we get
\begin{eqnarray*}
\frac12\frac{d}{dt}\|w_{\varepsilon,q}(t)\|_{L^1}&=&\frac12\int_{\RR^3}\divergence\vepsilon\,w_{\varepsilon,q}dx-\bar\gamma\int_{\RR^3} \cepsilon\big( \nabla \cepsilonq \cdot \vepsilonq+ \cepsilonq\divergence\vepsilonq\big)dx\\
&+&\int_{\RR^3}(T_{\varepsilon,q}^1\vepsilonq+T_{\varepsilon,q}^2\cepsilonq)dx\\&=&
\frac12\int_{\RR^3}\divergence\vepsilon\,w_{\varepsilon,q}dx+\bar\gamma\int_{\RR^3} \nabla\cepsilon\cdot\big(  \cepsilonq  \vepsilonq)dx+\int_{\RR^3}(T_{\varepsilon,q}^1\vepsilonq+T_{\varepsilon,q}^2\cepsilonq)dx\\
&\lesssim&\Big(\|\divergence\vepsilon(t)\|_{L^\infty}+\|\nabla\cepsilon(t)\|_{L^\infty}\Big)\|w_{\varepsilon,q}(t)\|_{L^1}+\|(T_{\varepsilon,q}^1,T_{\varepsilon,q}^2)\|_{L^2}\|w_{\varepsilon,q}(t)\|_{L^1}^{\frac12}.
\end{eqnarray*}
We have used  in the last line the Young inequality 
$|ab|\le\frac12(a^2+b^2)$. It follows that
$$
\frac{d}{dt}\|(\vepsilonq,\cepsilonq)(t)\|_{L^2}\lesssim\Big(\|\divergence\vepsilon(t)\|_{L^\infty}+\|\nabla\cepsilon(t)\|_{L^\infty}\Big)\|(\vepsilonq,\cepsilonq)(t)\|_{L^2}+\|(T_{\varepsilon,q}^1,T_{\varepsilon,q}^2)\|_{L^2}.
$$
Multiplying by $2^{qs}\Psi(q)$ and summing up over $q$ we get
$$
\frac{d}{dt}\|(\vepsilon,\cepsilon)(t)\|_{B_{2,r}^{s,\Psi}}\le \big(\|\divergence\vepsilon(t)\|_{L^\infty}+\|\nabla\cepsilon(t)\|_{L^\infty}\big)\|(\vepsilon,\cepsilon)(t)\|_{B_{2,r}^{s,\Psi}}+\Big(2^{qs}\Psi(q)\|(T_{\varepsilon,q}^1,T_{\varepsilon,q}^2)\|_{L^2}\Big)_{\ell^r}.
$$
Now according to the Lemma \ref{proz1} we have: for $s>0, r\in[1,\infty]$ and $\Psi\in\mathcal{U}$
$$
\big(2^{qs}\Psi(q)\|[\Delta_q, v\cdot\nabla]u\|_{L^2}\big)_{\ell^r}\lesssim\|\nabla v\|_{L^\infty}\|u\|_{B_{2,r}^{s,\Psi}}+\|\nabla u\|_{L^\infty}\|v\|_{B_{2,r}^{s,\Psi}}.
$$
This yields to
$$
\frac{d}{dt}\|(\vepsilon,\cepsilon)(t)\|_{B_{2,r}^{s,\Psi}}\lesssim \big(\|\nabla\vepsilon(t)\|_{L^\infty}+\|\nabla\cepsilon(t)\|_{L^\infty}\big)\|(\vepsilon,\cepsilon)(t)\|_{B_{2,r}^{s,\Psi}}.
$$
It suffices to use Gonwall's inequality to get the desired estimate.
\end{proof}
\section{Strichartz estimates}
The main goal of this section is to establish some    Strichartz estimates  for the compressible and acoustic parts which are governed by coupling nonlinear  wave equations. As it was shown   in the pioneering   work of Ukai \cite{MR87h:35279} the  use of the dispersion is crucial to get rid of the well-prepared assumption when studying the  convergence  towards the  incompressible Euler system. More precisely,  it was shown that by averaging  in time  the compressible and acoustic parts vanish  when the  Mach number goes to zero.  In our case we will take advantage of the Strichartz estimates firstly to deal with the  ill-prepared case   and secondly to  improve the lifespan by a judicious combination with  the special structure of the vorticity for the   axisymmetric flows. The  results concerning  Strichartz estimates that we will use here are well-known in the literature and are shortly discussed. Nevertheless we will give more details about their applications for the  isentropic Euler system.

We will start with rewriting the system \eqref{eqs:3} with the aid of the  free wave propagator
\begin{equation}
   \label{eq:3}
    \left\{
     \begin{array}{l}
       \partial _t \vepsilon+ \oneover{ \varepsilon } \nabla \cepsilon=f_\varepsilon:=  - \vepsilon \cdot \nabla
       \vepsilon - \cepsilon \nabla \cepsilon  \\[0.3ex]
       \partial _t \cepsilon + \oneover{ \varepsilon } \divergence \vepsilon=g_\varepsilon:=- \vepsilon \cdot \nabla
       \cepsilon - \cepsilon \divergence
       \vepsilon.
     \end{array}
   \right.
\end{equation}
Let $\mathcal{Q}$ be the operator $\mathcal{Q}v:=\nabla\Delta^{-1}\divergence v$ which is nothing but the compressible part of the \mbox{velocity $v$.} Set $|\textnormal{D}|:=\sqrt{-\Delta}$, then by simple computations we show that the quantity
$$
\Gamma_\epsilon:=\mathcal{Q}\vepsilon-i\nabla|\textnormal{D}|^{-1}\cepsilon
$$
satisfies the wave equation
\begin{equation}\label{eq01}
\partial_t\Gamma_\epsilon+\frac{i}{\varepsilon}{|\textnormal{D}|}\Gamma_\epsilon=\mathcal{Q}f_\varepsilon-i\nabla|\hbox{D}|^{-1}g_\varepsilon.
\end{equation}
Analogously,  the quantity 
$$
\gamma_\epsilon:=|\textnormal{D}|^{-1}\divergence \vepsilon+i\, \cepsilon
$$
satisfies the wave equation
\begin{equation}\label{eq02}
\partial_t\gamma_\epsilon+\frac{i}{\varepsilon}{|\textnormal{D}|}\gamma_\epsilon=|\textnormal{D}|^{-1}\divergence f_\varepsilon+i\,g_\varepsilon.
\end{equation}
We will use the following Strichartz estimates which can be found for instance in \cite{bh,MR97a:46047}.
\begin{lem}\label{lemst}
Let $\varphi$ be a complex solution of the wave equation
$$
\partial_t\varphi+\frac{i}{\varepsilon}{|\textnormal{D}|}\varphi=F.
$$
Then for every $ s\in\RR$, $ r>2$ there exists $C$ such that for every $T>0$ we have
$$
\|\varphi\|_{L^r_T\dot{B}_{\infty,1}^{s-\frac32+\frac1r}}\le C\varepsilon^{\frac1r} \big(\|\varphi(0)\|_{\dot{B}_{2,1}^s}+\|F\|_{L^1_T\dot{B}_{2,1}^s}  \big).
$$
\end{lem}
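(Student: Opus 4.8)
The estimate is a Strichartz bound for the free half--wave group $e^{-i\frac{t}{\varepsilon}|\textnormal{D}|}$, and the plan is to first scale the Mach number out of the time variable and then reduce to a single--frequency dispersive estimate. \textbf{Step 1: removing $\varepsilon$ by a time change.} Set $\psi(t,x):=\varphi(\varepsilon t,x)$. A direct computation shows that $\psi$ solves $\partial_t\psi+i|\textnormal{D}|\psi=\widetilde F$ with $\widetilde F(t):=\varepsilon F(\varepsilon t)$ and $\psi(0)=\varphi(0)$. Performing the substitution $\tau=\varepsilon t$ in the time integrals, one checks that $\|\psi\|_{L^r_{T/\varepsilon}\dot{B}^{\sigma}_{\infty,1}}=\varepsilon^{-\frac1r}\|\varphi\|_{L^r_{T}\dot{B}^{\sigma}_{\infty,1}}$ with $\sigma:=s-\frac32+\frac1r$, while $\|\widetilde F\|_{L^1_{T/\varepsilon}\dot{B}^s_{2,1}}=\|F\|_{L^1_T\dot{B}^s_{2,1}}$ and $\psi(0)=\varphi(0)$. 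Thus the claimed inequality is exactly the $\varepsilon=1$ version of itself multiplied by $\varepsilon^{1/r}$, and it suffices to prove the estimate for the standard half--wave equation $\partial_t\psi+i|\textnormal{D}|\psi=\widetilde F$, with a constant uniform in the time horizon. The gain $\varepsilon^{1/r}$ is produced entirely by the $L^r_t$ change of variables.

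\textbf{Step 2: Duhamel and Littlewood--Paley reduction.} For the $\varepsilon=1$ problem I would write $\psi(t)=e^{-it|\textnormal{D}|}\psi(0)+\int_0^t e^{-i(t-\tau)|\textnormal{D}|}\widetilde F(\tau)\,d\tau$ and treat the two terms separately. Since the right--hand side carries only an $L^1$ norm in time and $r>2>1$, the forcing term is handled by applying the homogeneous estimate to each $\widetilde F(\tau)$ and pulling the $L^r_t$ norm inside the integral by Minkowski's inequality; no Christ--Kiselev argument is required. It therefore remains to bound $\|e^{-it|\textnormal{D}|}g\|_{L^r_t\dot{B}^{\sigma}_{\infty,1}}\lesssim\|g\|_{\dot{B}^s_{2,1}}$. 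Expanding the $\dot{B}_{\infty,1}$ norm and using Minkowski in the $\ell^1$--sum over frequencies, this reduces to the single--block estimate
\[
\|e^{-it|\textnormal{D}|}\dot\Delta_q g\|_{L^r_t L^\infty_x}\lesssim 2^{q(\frac32-\frac1r)}\|\dot\Delta_q g\|_{L^2},
\]
since multiplying by $2^{q\sigma}$ then reconstitutes the weight $2^{qs}$ on the right (because $\sigma+\frac32-\frac1r=s$) and the $\ell^1$ summation matches the two $\dot{B}_{\cdot,1}$ indices.

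\textbf{Step 3: the fixed--frequency dispersive estimate, and the main obstacle.} Here I would combine the $L^2$ conservation $\|e^{-it|\textnormal{D}|}\dot\Delta_q g\|_{L^2}=\|\dot\Delta_q g\|_{L^2}$ with the three--dimensional dispersive decay $\|e^{-it|\textnormal{D}|}\dot\Delta_q g\|_{L^\infty}\lesssim 2^{3q}(1+2^q|t|)^{-1}\|\dot\Delta_q g\|_{L^1}$, the exponent $1=\frac{d-1}{2}$ being the wave decay rate in dimension $d=3$. A $TT^*$ argument then reduces the bound to estimating a time convolution against the kernel $2^{3q}(1+2^q|t|)^{-1}$; applying Young's inequality in the time variable, which requires this kernel to lie in $L^{r/2}_t$, produces precisely the factor $2^{q(\frac32-\frac1r)}$. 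The hypothesis $r>2$ enters exactly at this point, being equivalent to $r/2>1$, i.e.\ to the integrability needed for Young's inequality, and this is why the endpoint $r=2$ is excluded. The delicate point of the whole argument is the $L^\infty_x$ target: the $TT^*$/duality machinery is not directly available in $L^\infty$, and this is precisely why the statement is phrased with the Besov space $\dot{B}^{\sigma}_{\infty,1}$ rather than a Sobolev space. Working one dyadic block at a time keeps us at a fixed frequency, where the dispersive estimate is sharp and the $L^\infty_x$ endpoint is harmless, and the $\ell^1$ summation then closes the argument. I expect the genuine work to be concentrated in Step 3, while Steps 1--2 are routine; alternatively the content of Step 3 can simply be invoked from \cite{bh,MR97a:46047}.
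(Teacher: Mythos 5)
Your proof is correct. There is, however, nothing in the paper to compare it against: the paper does not prove Lemma \ref{lemst} at all, but simply quotes it from the literature (``which can be found for instance in \cite{bh,MR97a:46047}''). What you have written is precisely the standard argument from those references, filled in correctly: the time rescaling $t\mapsto\varepsilon t$ that turns the $\frac{1}{\varepsilon}$ penalization into the gain $\varepsilon^{\frac1r}$ via the $L^r_t$ norm (with the correct observation that one then needs the $\varepsilon=1$ estimate uniformly in the time horizon, which holds since the wave Strichartz estimates are global in time); the Duhamel--Minkowski reduction, where you rightly note that the $L^1_t$ norm on the forcing makes Christ--Kiselev unnecessary; the Littlewood--Paley bookkeeping $\sigma+\frac32-\frac1r=s$; and the single-block bound from the three-dimensional dispersive decay $2^{3q}(1+2^q|t|)^{-1}$ via $TT^*$ and Young's inequality. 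Your exponents check out ($\|K_q\|_{L^{r/2}_t}\approx 2^{q(3-\frac2r)}$, hence $\|T\|\lesssim 2^{q(\frac32-\frac1r)}$), the identification of $r>2$ as exactly the integrability condition for the kernel in $L^{r/2}_t$ is right, and your remark that working at a fixed frequency defuses the $L^\infty_x$ duality issue in the $TT^*$ scheme is the correct resolution of the one genuinely delicate point.
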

As a consequence we get the following Strichartz estimates for both compressible and acoustic parts.
\begin{cor}\label{cor11}
Let $r>2$ and $\vepsiloninit,\cepsiloninit\in B_{2,1}^{\frac52-\frac1r}.$ Then the solutions of  the system \eqref{eqs:3} satisfy for all $T>0$ and $0<\varepsilon\le1$,
\begin{eqnarray*}
\|\mathcal{Q}\vepsilon\|_{L^r_T\dot{B}_{\infty,1}^0}+\|\cepsilon\|_{L^r_T\dot{B}_{\infty,1}^0}&\le& C_0\varepsilon^{\frac1r}(1+T)e^{CV_\varepsilon(T)}
\end{eqnarray*}
with $C_0$ a constant depending on $r$ and the norm $\|(\vepsiloninit,\cepsiloninit)\|_{B_{2,1}^{\frac52-\frac1r}}$ and 
$$
V_\varepsilon(T)=\int_0^T(\|\nabla\vepsilon(t)\|_{L^\infty}+\|\nabla\cepsilon(t)\|_{L^\infty}\big)dt.
$$
\end{cor}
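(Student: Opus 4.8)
The plan is to reduce everything to the two complex unknowns $\Gamma_\epsilon$ and $\gamma_\epsilon$ of \eqref{eq01}--\eqref{eq02}, for which a clean wave equation is already available, and then feed these equations into the Strichartz estimate of Lemma \ref{lemst}. The first observation is that the desired quantities are simply the real and imaginary parts of these unknowns: since $\vepsilon$ and $\cepsilon$ are real-valued and the operators $\mathcal{Q}=\nabla\Delta^{-1}\divergence$ and $\nabla|\textnormal{D}|^{-1}$ are real Fourier multipliers, one has $\mathcal{Q}\vepsilon=\mathrm{Re}\,\Gamma_\epsilon$ and $\cepsilon=\mathrm{Im}\,\gamma_\epsilon$. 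Because the dyadic blocks $\dot\Delta_q$ have real convolution kernels they commute with complex conjugation, whence $\|\mathcal{Q}\vepsilon\|_{L^r_T\dot B^0_{\infty,1}}\le\|\Gamma_\epsilon\|_{L^r_T\dot B^0_{\infty,1}}$ and $\|\cepsilon\|_{L^r_T\dot B^0_{\infty,1}}\le\|\gamma_\epsilon\|_{L^r_T\dot B^0_{\infty,1}}$. It therefore suffices to bound $\Gamma_\epsilon$ and $\gamma_\epsilon$.

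Next I would apply Lemma \ref{lemst} to \eqref{eq01} and \eqref{eq02} with the choice $s=\frac32-\frac1r$, so that the target index $s-\frac32+\frac1r$ is exactly $0$. This gives
$$\|\Gamma_\epsilon\|_{L^r_T\dot B^0_{\infty,1}}\le C\varepsilon^{\frac1r}\Big(\|\Gamma_\epsilon(0)\|_{\dot B^{\frac32-\frac1r}_{2,1}}+\|\mathcal{Q}f_\varepsilon-i\nabla|\textnormal{D}|^{-1}g_\varepsilon\|_{L^1_T\dot B^{\frac32-\frac1r}_{2,1}}\Big),$$
and similarly for $\gamma_\epsilon$. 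For the data term I would use that $\mathcal{Q}$ and $\nabla|\textnormal{D}|^{-1}$ are homogeneous multipliers of order $0$, hence bounded on $\dot B^{\frac32-\frac1r}_{2,1}$, and that for the positive index $\frac32-\frac1r>0$ one has $\|u\|_{\dot B^{\frac32-\frac1r}_{2,1}}\lesssim\|u\|_{B^{\frac32-\frac1r}_{2,1}}\le\|u\|_{B^{\frac52-\frac1r}_{2,1}}$, the low frequencies being absorbed by the $L^2$ norm. Thus $\|\Gamma_\epsilon(0)\|_{\dot B^{\frac32-\frac1r}_{2,1}}\lesssim\|(\vepsiloninit,\cepsiloninit)\|_{B^{\frac52-\frac1r}_{2,1}}=:C_0$, which is the source of the $1$ in the factor $(1+T)$.

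The heart of the matter is the source term. Since the order-$0$ multipliers $\mathcal{Q}$, $\nabla|\textnormal{D}|^{-1}$, $|\textnormal{D}|^{-1}\divergence$ are bounded on $\dot B^{\frac32-\frac1r}_{2,1}$, it is enough to estimate $f_\varepsilon$ and $g_\varepsilon$ themselves in $L^1_T\dot B^{\frac32-\frac1r}_{2,1}$. Each constituent of $f_\varepsilon$ and $g_\varepsilon$ (namely $\vepsilon\cdot\nabla\vepsilon$, $\cepsilon\nabla\cepsilon$, $\vepsilon\cdot\nabla\cepsilon$, $\cepsilon\divergence\vepsilon$) is a product of an undifferentiated factor with a once-differentiated one. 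Hence the tame product estimate coming from Bony's decomposition, $\|ab\|_{\dot B^{\sigma}_{2,1}}\lesssim\|a\|_{L^\infty}\|b\|_{\dot B^{\sigma}_{2,1}}+\|b\|_{L^\infty}\|a\|_{\dot B^{\sigma}_{2,1}}$ valid for $\sigma>0$, together with $\|\nabla u\|_{\dot B^{\frac32-\frac1r}_{2,1}}\approx\|u\|_{\dot B^{\frac52-\frac1r}_{2,1}}$ and the embedding $B^{\frac52-\frac1r}_{2,1}\hookrightarrow L^\infty$ (legitimate since $\frac52-\frac1r\ge\frac32$), bounds each of them by $\|(\vepsilon,\cepsilon)(t)\|_{B^{\frac52-\frac1r}_{2,1}}^2$. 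Invoking the energy estimate of Proposition \ref{energy1} at regularity $\frac52-\frac1r>0$, this is $\le C_0^2\,e^{2CV_\varepsilon(t)}\le C_0^2\,e^{2CV_\varepsilon(T)}$; integrating over $[0,T]$ gives the $L^1_T$ norm a bound $\lesssim T\,C_0^2\,e^{2CV_\varepsilon(T)}$. Collecting the data and source contributions and renaming constants yields $C_0\varepsilon^{\frac1r}(1+T)e^{CV_\varepsilon(T)}$, as claimed.

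The step I expect to be the main obstacle is the precise regularity bookkeeping rather than any single hard inequality. The dispersive gain in Lemma \ref{lemst} costs $\frac32$ derivatives, which pins the wave data at level $\frac32-\frac1r$; but the derivative sitting inside each transport nonlinearity raises the regularity of $(\vepsilon,\cepsilon)$ that one must control to level $\frac52-\frac1r$, and this is exactly the level at which the energy estimate is available, so the two budgets match. One must also take care that homogeneous Besov norms are not monotone in the regularity index: I would route the low frequencies through the $L^2$ bound (permissible because $\frac32-\frac1r>0$) and control every homogeneous norm that appears by the single inhomogeneous quantity $\|(\vepsilon,\cepsilon)(t)\|_{B^{\frac52-\frac1r}_{2,1}}$ that Proposition \ref{energy1} propagates.
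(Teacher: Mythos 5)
Your proposal is correct and coincides in essence with the paper's own proof: both apply Lemma \ref{lemst} to the wave equations \eqref{eq01}--\eqref{eq02} with $s=\frac32-\frac1r$, use the boundedness of the order-zero multipliers on homogeneous Besov spaces, estimate $f_\varepsilon,g_\varepsilon$ by a tame product law plus the embedding $B_{2,1}^{\frac52-\frac1r}\hookrightarrow L^\infty$, and close with Proposition \ref{energy1} before taking real/imaginary parts of $\Gamma_\varepsilon$ and $\gamma_\varepsilon$. The only cosmetic difference is that you run the product estimate in homogeneous norms and pass to inhomogeneous ones at the end, while the paper passes to inhomogeneous norms immediately via $B_{2,1}^{s}\hookrightarrow\dot B_{2,1}^{s}$, $s>0$; the substance is identical.
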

\begin{proof}
Applyiny Lemma \ref{lemst} to the equation \eqref{eq01} with $r>2$ and $s=\frac32-\frac1r$ we  get
\begin{eqnarray*}
\|\Gamma_\varepsilon\|_{L^r_T\dot{B}_{\infty,1}^0}&\le& C \varepsilon^{\frac1r}\big(\|\Gamma_{\varepsilon}(0)\|_{\dot{B}_{2,1}^{\frac32-\frac1r}}+\|\mathcal{Q}f_\varepsilon-i\nabla|\hbox{D}|^{-1}g_\varepsilon\|_{L^1_T\dot{B}_{2,1}^{\frac32-\frac1r}}  \big).
\end{eqnarray*}
Since the operators $\mathcal{Q}$ and $\nabla|\hbox{D}|^{-1}$ are homogeneous of order zero  then they are  continuous on the homogeneous Besov spaces and thus we get
\begin{eqnarray*}
\|\Gamma_\varepsilon\|_{L^r_T\dot{B}_{\infty,1}^0}&\le& C \varepsilon^{\frac1r}\big(\|(\vepsiloninit,\cepsiloninit)\|_{\dot{B}_{2,1}^{\frac32-\frac1r}}+\|(f_\varepsilon,g_\varepsilon)\|_{L^1_T\dot{B}_{2,1}^{\frac32-\frac1r}}  \big)\\
&\le& C \varepsilon^{\frac1r}\big(\|(\vepsiloninit,\cepsiloninit)\|_{{B}_{2,1}^{\frac32-\frac1r}}+\|(f_\varepsilon,g_\varepsilon)\|_{L^1_T{B}_{2,1}^{\frac32-\frac1r}}  \big).
\end{eqnarray*}
We point out that we have used above the embedding  ${B}_{2,1}^{s}\hookrightarrow \dot{B}_{2,1}^{s}$, for $s>0$.
To estimate the terms $f_\varepsilon$ and $g_\varepsilon$ we will use the following law product that can be easily proven by using Bony's decomposition: for $s>0$
$$
\|f\partial_j g\|_{{B}_{2,1}^{s}}\lesssim \|f\|_{L^\infty}\|g\|_{{B}_{2,1}^{s+1}}+\|g\|_{L^\infty}\|f\|_{{B}_{2,1}^{s+1}}.
$$ 
Therefore we find
$$
\|(f_\varepsilon,g_\varepsilon)\|_{{B}_{2,1}^{\frac32-\frac1r}}\le C\|(\vepsilon,\cepsilon)\|_{L^\infty}\|(\vepsilon,\cepsilon)\|_{{B}_{2,1}^{\frac52-\frac1r}}.
$$
Combining this estimate with Sobolev embedding ${B}_{2,1}^{\frac52-\frac1r}\hookrightarrow L^\infty$ (true  for $r\geq1$), we get
$$
\|(f_\varepsilon,g_\varepsilon)\|_{{B}_{2,1}^{\frac32-\frac1r}}\le C\|(\vepsilon,\cepsilon)\|_{{B}_{2,1}^{\frac52-\frac1r}}^2.
$$
Applying   Proposition \ref{energy1} yields to
$$
\|(\vepsilon,\cepsilon)\|_{L^\infty_T{B}_{2,1}^{\frac52-\frac1r}}\le C \|(\vepsiloninit,\cepsiloninit)\|_{{B}_{2,1}^{\frac52-\frac1r}} e^{CV_\varepsilon(T)}.
$$
Thus we obtain for $r>2$,
\begin{eqnarray*}
\|\Gamma_\varepsilon\|_{L^r_T\dot{B}_{\infty,1}^0}&\le& C \varepsilon^{\frac1r}\Big( \|(\vepsiloninit,\cepsiloninit)\|_{{B}_{2,1}^{\frac32-\frac1r}}+\|(\vepsiloninit,\cepsiloninit)\|_{{B}_{2,1}^{\frac52-\frac1r}}^2 T e^{CV_\varepsilon(T)}\Big)\\
&\le& C_0\varepsilon^{\frac1r}(1+T)e^{CV_\varepsilon(T)}.
\end{eqnarray*}
Since the real part of $\Gamma_\varepsilon$ is the compressible part of $v_\varepsilon$ then
$$
\|\mathcal{Q}\vepsilon\|_{L^r_T\dot{B}_{\infty,1}^0}\le C_0\varepsilon^{\frac1r}(1+T)e^{CV_\varepsilon(T)}.
$$
By the same way we prove a similar result for $\gamma_\varepsilon:$
$$
\|\gamma_\varepsilon\|_{L^r_T\dot{B}_{\infty,1}^0}\le C_0\varepsilon^{\frac1r}(1+T)e^{CV_\varepsilon(T)}.
$$
This gives the desired estimate for $\|\cepsilon\|_{L^r_T\dot{B}_{\infty,1}^0}.$
\end{proof}
\begin{prop}\label{stt1}
Let $(\vepsiloninit,\cepsiloninit)$ be a $ H^s$-bounded family with $s>\frac52$ and $\vepsilon,\cepsilon\in \mathcal{C}([0,T_\varepsilon);H^s)$ be the maximal solution of the system \eqref{eqs:3}. Then for every $r>2,$ $0\le T<T_\varepsilon$ and $\varepsilon\in]0,1]$ we have, 
$$
\|\divergence\vepsilon\|_{L^1_TB_{\infty,1}^0}+\|\nabla\cepsilon\|_{L^1_TB_{\infty,1}^0}\le C_0\varepsilon^{\frac{2s-5}{r(2s-3)}}(1+T^{2}) e^{CV_\varepsilon(T)}.
$$

\end{prop}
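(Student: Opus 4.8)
The plan is to obtain the bound by interpolating between the two estimates already at our disposal: the Strichartz estimate of Corollary \ref{cor11}, which gives a gain of $\varepsilon^{1/r}$ but only in the low regularity space $\dot{B}_{\infty,1}^0$, and the energy estimate of Proposition \ref{energy1}, which controls the high regularity norm $H^s=B_{2,2}^s$ (taking $\Psi\equiv1$, $r=2$). The pivotal observation is that $\divergence\vepsilon=\divergence\mathcal{Q}\vepsilon$ and $\nabla\cepsilon$ each cost exactly one derivative relative to $\mathcal{Q}\vepsilon$ and $\cepsilon$, which at the level of a dyadic block is the factor $2^q$ furnished by Bernstein's Lemma \ref{lb}. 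I will carry out the argument for $\divergence\vepsilon$, the term $\nabla\cepsilon$ being identical with $\cepsilon$ in place of $\mathcal{Q}\vepsilon$. Fixing a cutoff $N\in\NN$ to be optimized, I split
$$
\|\divergence\vepsilon\|_{L^1_TB_{\infty,1}^0}=\sum_{q\geq-1}\int_0^T\|\Delta_q\divergence\vepsilon(t)\|_{L^\infty}\,dt
$$
into the low part $-1\le q\le N$ and the high part $q>N$.

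For the low frequencies I would use Bernstein to write $\|\Delta_q\divergence\vepsilon\|_{L^\infty}\lesssim 2^q\|\Delta_q\mathcal{Q}\vepsilon\|_{L^\infty}$, then Hölder in time to pass from $L^1_T$ to $L^r_T$:
$$
\sum_{-1\le q\le N}\int_0^T 2^q\|\Delta_q\mathcal{Q}\vepsilon\|_{L^\infty}\,dt\lesssim 2^N\,T^{1-\frac1r}\,\|\mathcal{Q}\vepsilon\|_{L^r_T\dot{B}_{\infty,1}^0}.
$$
Here the block $q=-1$ is harmless, being controlled by $\|\Delta_{-1}u\|_{L^\infty}\lesssim\|u\|_{\dot{B}_{\infty,1}^0}$, and for $q\ge0$ the inhomogeneous and homogeneous blocks coincide. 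Invoking Corollary \ref{cor11} and absorbing $T^{1-\frac1r}(1+T)$ into $(1+T^2)$ bounds this contribution by $C_0\,2^N\varepsilon^{\frac1r}(1+T^2)e^{CV_\varepsilon(T)}$.

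For the high frequencies I would spend the derivative against the $H^s$ smoothness. Bernstein from $L^2$ to $L^\infty$ on $\RR^3$ together with the $L^2$-boundedness of $\mathcal{Q}$ gives $\|\Delta_q\divergence\vepsilon\|_{L^\infty}\lesssim 2^{\frac{5q}{2}}\|\Delta_q\vepsilon\|_{L^2}$, and Cauchy--Schwarz applied to the $\ell^2$ sequence $\big(2^{qs}\|\Delta_q\vepsilon\|_{L^2}\big)_q$ produces the tail bound $\sum_{q>N}2^{\frac{5q}{2}}\|\Delta_q\vepsilon\|_{L^2}\lesssim 2^{-N(s-\frac52)}\|\vepsilon\|_{H^s}$; it is precisely the strict inequality $s>\frac52$ that makes this geometric tail summable with a genuinely decaying prefactor. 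Integrating in time and using $\|\vepsilon\|_{L^\infty_TH^s}\le C_0\,e^{CV_\varepsilon(T)}$ from Proposition \ref{energy1} bounds the high-frequency part by $C_0\,2^{-N(s-\frac52)}(1+T)e^{CV_\varepsilon(T)}$.

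It then remains to optimize the cutoff, and this balancing is the only genuinely delicate step. Equating the two contributions forces $2^N\sim\varepsilon^{-1/(r(s-3/2))}$, for which the common value of $2^N\varepsilon^{\frac1r}$ and $2^{-N(s-\frac52)}$ equals $\varepsilon^{(s-5/2)/(r(s-3/2))}=\varepsilon^{(2s-5)/(r(2s-3))}$; taking $N$ to be the nearest nonnegative integer (the estimate being trivial when $\varepsilon$ stays away from $0$) yields the announced power. Running the same computation for $\nabla\cepsilon$ and summing completes the proof. The conceptual content of the obstacle is that the one-derivative loss degrades the raw Strichartz gain $\varepsilon^{1/r}$ into the smaller exponent $\frac{2s-5}{r(2s-3)}$, which stays strictly positive exactly because we work in the subcritical range $s>\frac52$; in the critical case $s=\frac52$ the exponent vanishes and this interpolation collapses, which is the difficulty the paper addresses separately in the critical framework.
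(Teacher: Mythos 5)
Your proof is correct, and it rests on the same ingredients as the paper's: Bernstein's inequality, the Strichartz bound of Corollary \ref{cor11} for the low frequencies, the energy bound of Proposition \ref{energy1} for the high-frequency tail, and a balancing of the cutoff that produces the exponent $\frac{2s-5}{r(2s-3)}$. The one structural difference is \emph{where} the cutoff is optimized. The paper first proves the pointwise-in-time interpolation inequality of Lemma \ref{lem2}, choosing $N=N(t)$ so that $2^{N(s-\frac32)}\approx\|\varphi(t)\|_{H^s}/\|\varphi(t)\|_{L^\infty}$, which gives $\|\nabla\varphi(t)\|_{B_{\infty,1}^0}\lesssim\|\varphi(t)\|_{H^s}^{\frac{2}{2s-3}}\|\varphi(t)\|_{L^\infty}^{\frac{2s-5}{2s-3}}$, and only afterwards applies H\"older in time together with Corollary \ref{cor11} and Proposition \ref{energy1} with $\varphi=\mathcal{Q}\vepsilon$. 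You instead fix a single cutoff for the whole time interval and choose it in terms of $\varepsilon$, namely $2^N\approx\varepsilon^{-\frac{2}{r(2s-3)}}$, after the a priori bounds have been inserted; both organizations yield identical conclusions here, since the adaptive choice offers no extra gain once the Strichartz and energy norms are the only inputs. It is worth noting that your version --- a time-uniform, $\varepsilon$-dependent cutoff --- is precisely the device the paper itself is forced to adopt in the critical case (Section \ref{critik}, where the cutoff is fixed by $e^{N}\approx\varepsilon^{-\frac{1}{2r}}$), because there the pointwise balancing of Lemma \ref{lem2} degenerates, exactly as your closing remark about the collapse of the exponent at $s=\frac52$ anticipates.
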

\begin{proof}
Applying Lemma \ref{lem2} with $\varphi=\mathcal{Q}\vepsilon$ and using the identity $\divergence \mathcal{Q}\vepsilon=\divergence\vepsilon$, we get
\begin{eqnarray*}
\|\divergence\vepsilon\|_{L^1_TB_{\infty,1}^0}&\le& CT^{1-\frac{2s-5}{r(2s-3)}}\|\mathcal{Q}\vepsilon\|_{L^r_TL^\infty}^{\frac{2s-5}{2s-3}}\|\mathcal{Q}\vepsilon\|_{L^\infty_TH^s}^{\frac{2}{2s-3}}\\
&\le&CT^{1-\frac{2s-5}{r(2s-3)}}\|\mathcal{Q}\vepsilon\|_{L^r_T\dot{B}_{\infty,1}^0}^{\frac{2s-5}{2s-3}}\|\vepsilon\|_{L^\infty_TH^s}^{\frac{2}{2s-3}}.
\end{eqnarray*}
We have used in the last inequality the embedding $\dot{B}_{\infty,1}^0\hookrightarrow L^\infty$. Now,
combining this estimate with Proposition \ref{energy1} and Corollary \ref{cor11},
\begin{eqnarray*}
\|\divergence\vepsilon\|_{L^1_TB_{\infty,1}^0}
&\le&C_0\varepsilon^{\frac{2s-5}{r(2s-3)}}(1+T)^{1+\frac{2s-5}{2s-3}(1-\frac1r)} e^{CV_\varepsilon(t)}\\
&\le&C_0\varepsilon^{\frac{2s-5}{r(2s-3)}}(1+T^{2}) e^{CV_\varepsilon(t)}.
\end{eqnarray*}
Similarly we obtain an analogous  estimate for the acoustic part $\|\nabla\cepsilon\|_{L^1_TB_{\infty,1}^0}$.

\end{proof}

\section{Axisymmetric flows}
In this  section we intend to establish some preliminary results about the axisymmetric geometry for  the compressible flows. First, we prove the persistence in time of this geometry when it is initially prescribed and second we analyze the structure and the dynamics of the vorticity. We end this section with some useful a priori estimates.
\subsection{Persistence of the geometry}
The study of  axisymmetric  flows was  initiated by Ladyzhanskaya \cite{Ladyza}  and Ukhoviskii and Yudovich \cite{Ukhovskii}  for both incompressible Euler and Navier-Stokes equations. This study has been recently  extended for other models of incompressible fluid dynamics like stratified Euler and Navier-Stokes  systems \cite{AHK, HR}. First of all we will show  the compatibility of this geometry with the model \eqref{eqs:3} but we need before to give a precise definition of   axisymmetric vector fields.
\begin{defi}\label{defi1}

{\it
$\bullet$ We say that a  vector field $v:\RR^3\to\RR^3$ is  axisymmetric  if it satisfies
$$
\mathcal{R}_{-\alpha}\{v(\mathcal{R}_\alpha x)\}=v(x),\quad \forall\alpha\in[0,2\pi],\quad \forall x\in\RR^3,
$$
where $\mathcal{R}_\alpha$ denotes the rotation of axis $(Oz)$ and with angle $\theta$. An axisymmetric vector field $v$ is called without swirl if its angular component vanishes, which is equivalent to the fact that $v$ takes the form:
$$
v(x) = v^r(r, z)e_r + v^z (r, z)e_z,\quad x=(x_{1},x_{2},z),\quad r=({x_{1}^2+x_{2}^2})^{\frac12},
$$
where  $\big(e_r, e_{\theta} , e_z\big)$ is the cylindrical basis of
$\mathbb R^3$ and the components $v^r$ and $v^z$ do not depend on
the angular variable.  

$\bullet$ A scalar function $f:\RR^3\to\RR$ is called axisymmetric if the vector field $x\mapsto f(x)e_z$ is axisymmetric, which means that
$$
f(\mathcal{R}_\alpha x)=f(x),\,\forall\, x\in\RR^3, \forall\,\alpha\in[0,2\pi].
$$ 
This is equivalent to say that $f$ depends only on $r$ and $z$.
}
\end{defi} 
Now we will prove the persistence of this geometry for the system \eqref{eqs:3}.
\begin{prop}\label{pers}
Let $(\vepsiloninit,\cepsiloninit)$ be a smooth axisymmetric initial data without swirl. Then the associated maximal solution of \eqref{eqs:3}   remains axisymmetric.

\end{prop}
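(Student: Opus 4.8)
The plan is to deduce the persistence of the geometry from the \emph{covariance} of the system \eqref{eqs:3} under the orthogonal group $O(3)$, combined with the uniqueness of solutions in $H^s$, $s>\frac52$. Given $A\in O(3)$ I would set $\tilde v(t,x):=A^{-1}v(t,Ax)$ and $\tilde c(t,x):=c(t,Ax)$, and first verify that $(\tilde v,\tilde c)$ solves the very same system \eqref{eqs:3}. This is a chain-rule computation: since $A^{-1}=A^{\mathrm t}$, one gets $\divergence\tilde v=(\divergence v)(A\cdot)$ and $\tilde v\cdot\nabla\tilde c=(v\cdot\nabla c)(A\cdot)$ (both transforming as scalars), while $\nabla\tilde c=A^{-1}(\nabla c)(A\cdot)$ and $(\tilde v\cdot\nabla)\tilde v=A^{-1}\big((v\cdot\nabla)v\big)(A\cdot)$ (both transforming as vectors); the orthogonality relation $AA^{\mathrm t}=\mathrm{Id}$ is exactly what makes the advection and the divergence transform correctly. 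The penalized terms $\frac1\varepsilon\nabla\tilde c$, $\frac1\varepsilon\divergence\tilde v$ and the quadratic terms $\bar\gamma\tilde c\,\nabla\tilde c$, $\bar\gamma\tilde c\,\divergence\tilde v$ are covariant for the same reasons, so $(\tilde v,\tilde c)$ indeed satisfies \eqref{eqs:3} with data $(A^{-1}v_{0,\varepsilon}(A\cdot),c_{0,\varepsilon}(A\cdot))$. Since orthogonal transformations are isometries of $H^s$, this transformed pair lies in the same solution class.

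Next I would specialize to the rotation $A=\mathcal{R}_\alpha$ about the axis $(Oz)$. By Definition \ref{defi1} the axisymmetry of the initial data means precisely $\mathcal{R}_{-\alpha}v_{0,\varepsilon}(\mathcal{R}_\alpha\cdot)=v_{0,\varepsilon}$ and $c_{0,\varepsilon}(\mathcal{R}_\alpha\cdot)=c_{0,\varepsilon}$ for every $\alpha$. Hence the transformed solution $(\tilde v,\tilde c)$ and the original solution $(\vepsilon,\cepsilon)$ share the same initial data; by the uniqueness of the $H^s$ solution on the maximal time interval I conclude $\mathcal{R}_{-\alpha}\vepsilon(t,\mathcal{R}_\alpha\cdot)=\vepsilon(t,\cdot)$ and $\cepsilon(t,\mathcal{R}_\alpha\cdot)=\cepsilon(t,\cdot)$ for all $t$ in the lifespan, i.e. the solution stays axisymmetric.

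It remains to propagate the \emph{without swirl} condition $\vepsilon^\theta=0$. Having established axisymmetry, I would project the momentum equation onto $e_\theta$ in cylindrical coordinates. Because $\cepsilon$ is axisymmetric, $\nabla\cepsilon$ has no angular component, so the pressure and penalized terms drop out of the $\theta$-component; using $\partial_\theta\equiv0$ this component reads $\partial_t\vepsilon^\theta+\vepsilon^r\partial_r\vepsilon^\theta+\vepsilon^z\partial_z\vepsilon^\theta+\frac{\vepsilon^r\vepsilon^\theta}{r}=0$. Multiplying by $r$ and regrouping yields precisely the transport equation $\big(\partial_t+\vepsilon^r\partial_r+\vepsilon^z\partial_z\big)(r\,\vepsilon^\theta)=0$ for the angular momentum $r\,\vepsilon^\theta$. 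Since the data has no swirl, $r\,\vepsilon^\theta|_{t=0}=0$, and the transport structure forces $r\,\vepsilon^\theta\equiv0$, hence $\vepsilon^\theta\equiv0$ throughout the lifespan. Alternatively, one obtains the same conclusion by applying the covariance step to a reflection $\sigma$ across a plane containing $(Oz)$, for which axisymmetric-without-swirl data is invariant while $\vepsilon^\theta\mapsto-\vepsilon^\theta$, and invoking uniqueness once more.

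The only genuinely delicate ingredient, and the one that drives the whole argument, is the uniqueness of solutions in the class $C([0,T_\varepsilon[;H^s)$ with $s>\frac52$: this is where the symmetric-hyperbolic (Friedrichs-symmetrizable) nature of \eqref{eqs:3} is used, and it is what permits transferring the symmetry of the data to the solution. The covariance computation itself is routine, and the $H^s$-isometry property of the orthogonal action guarantees that no regularity is lost in passing from $(\vepsilon,\cepsilon)$ to its transform.
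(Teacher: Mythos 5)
Your proposal is correct and follows essentially the same route as the paper: covariance of the system \eqref{eqs:3} under rotations about $(Oz)$ combined with uniqueness of the $H^s$ solution gives axisymmetry, and the $\theta$-component of the momentum equation then kills the swirl. The only (harmless) variations are that you verify covariance for all of $O(3)$ rather than just the rotations $\mathcal{R}_\alpha$, and that you close the swirl step by observing that $r\,\vepsilon^\theta$ is exactly transported, whereas the paper applies the maximum principle and Gronwall's inequality to the same equation for $\vepsilon^\theta$ directly.
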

\begin{proof}
For the sake of the simplicity we will remove the subscript  $\varepsilon$ from our notations. We set 
$$
v_\alpha(t,x)=\mathcal{R}_{-\alpha}\{v(t,\mathcal{R}_\alpha x)\}\quad\hbox{and}\quad c_\alpha(t,x)=c(t,\mathcal{R}_\alpha x).
$$
Our goal is to show that $(v_\alpha,c_\alpha)$ solves the system   \eqref{eqs:3}. First of all, we claim that
\begin{equation}\label{axx}
(v_\alpha\cdot\nabla v_\alpha)(t,x)=\mathcal{R}_{-\alpha}\{(v\cdot\nabla v)(t,\mathcal{R}_\alpha x)\}.
\end{equation}
Indeed, obvious computations yield to
\begin{eqnarray*}
(v_\alpha\cdot\nabla v_\alpha)(t,x)&=&\sum_{i=1}^3\big(\mathcal{R}_{-\alpha}\{v(t,\mathcal{R}_\alpha x)\}\big)^i\partial_i\{\mathcal{R}_{-\alpha}\,v(t,\mathcal{R}_\alpha x)\}\\
&=&\sum_{i=1}^3\big(\mathcal{R}_{-\alpha}\{v(t,\mathcal{R}_\alpha x)\}\big)^i\,\mathcal{R}_{-\alpha}\partial_i\{v(t,\mathcal{R}_\alpha x)\}\\
&=&\mathcal{R}_{-\alpha}\sum_{i=1}^3\big(\mathcal{R}_{-\alpha}\{v(t,\mathcal{R}_\alpha x)\}\big)^i\,\partial_i\{v(t,\mathcal{R}_\alpha x)\}\\
&:=&\mathcal{R}_{-\alpha}w.
\end{eqnarray*}
From the formula 
$$\partial_i\{v(t,\mathcal{R}_\alpha x)\}^j=\big(\mathcal{R}_{-\alpha}\{(\nabla v^j)(t,\mathcal{R}_\alpha x)\}\big)^i
$$
and using the fact that the rotations preserve Euclidian scalar product we obtain
\begin{eqnarray*}
w^j&=&\sum_{i=1}^3\big(\mathcal{R}_{-\alpha}\{v(t,\mathcal{R}_\alpha x)\}\big)^i\,\big(\mathcal{R}_{-\alpha}\{(\nabla v^j)(\mathcal{R}_\alpha x)\}\big)^i\\
&=&(v\cdot\nabla v^j)(t,\mathcal{R}_\alpha x).
\end{eqnarray*}
Therefore we get
\begin{eqnarray*}
(v_\alpha\cdot\nabla v_\alpha)(t,x)
&=&\mathcal{R}_{-\alpha}\{(v\cdot\nabla v)(t,\mathcal{R}_\alpha x)\}.
\end{eqnarray*}
Now if $f$ is a scalar function then
\begin{equation}\label{axxx}
\mathcal{R}_{-\alpha}\{(\nabla f)(\mathcal{R}_\alpha x)\}=\nabla\{ f(\mathcal{R}_\alpha x)\}.
\end{equation}
Using this identity and \eqref{axx} we prove that $(v_\alpha, c_\alpha)$ satisfies the first equation of \eqref{eqs:3}. It remains to prove that this couple of functions satisfies also the second equation of \eqref{eqs:3}. We write according to the identity \eqref{axxx} and from the fact that $\mathcal{R}_\alpha$ is an isometry,
\begin{eqnarray*}
(v_\alpha\cdot\nabla c_\alpha)(t,x)&=&\sum_{i=1}^3\mathcal{R}_{-\alpha}\{v(t,\mathcal{R}_\alpha x)\}\cdot\mathcal{R}_{-\alpha}\{ \nabla c\}(t,\mathcal{R}_\alpha x)\\
&=&\{v\cdot\nabla c\}(t,\mathcal{R}_\alpha x).
\end{eqnarray*}
It remains to check that
$$
\divergence v_\alpha(t,x)=\{\divergence v\}(t,\mathcal{R}_\alpha x).
$$
Indeed, set $\mathcal{R}_{-\alpha}:=(a_{ij})_{1\le i,j\le 3}$, then since $\mathcal{R}_\alpha^\star=\mathcal{R}_{-\alpha}$,  we find
\begin{eqnarray*}
\divergence v_\alpha(t,x)&=&\sum_{i,j=1}^3\partial_i\{a_{ij}v^j(t,\mathcal{R}_\alpha x)\}\\
&=&\sum_{i,k,j=1}^3a_{ij} a_{ik}\{\partial_kv^j\}(t,\mathcal{R}_\alpha x)\\
&=& \sum_{k,j=1}^3\delta_{kj}\{\partial_kv^j\}(t,\mathcal{R}_\alpha x)\\
&=&\{\divergence v\}(t,\mathcal{R}_\alpha x).
\end{eqnarray*}
Finally, we deduce that $(v_\alpha, c_\alpha)$ satisfies the same equations of $(v,c)$ and by the uniqueness of the solutions we get $v_\alpha=v, c_\alpha=c$ for every $\alpha\in[0,2\pi]$ and thus the solution is axisymmetric. To achieve the proof it remains to show that the angular component $v^\theta$ of the velocity $v$ is zero. By direct computations using the axisymmetry of the solution $(v,c)$ we get 
$$
\partial_t v^\theta+v\cdot\nabla v^\theta+\frac{v^r}{r} v^\theta=0.
$$
It follows from  the maximum principle and Gronwall's inequality that
$$
\|v^\theta(t)\|_{L^\infty}\le \|v^\theta_0\|_{L^\infty}e^{\|v^r/r\|_{L^1_tL^\infty}}.
$$
Therefore if $v^\theta_0\equiv0$ then $v^\theta(t)\equiv0$ everywhere the solution is defined.
\end{proof}
\begin{Rema}\label{remaa}
From the above computations we get the following assertions:
\begin{enumerate}
\item If $c$ is a scalar axisymmetric function then its gradient $\nabla c$ is an axisymmetric vector field.

\item If $v$ is an axisymmetric vector field, then its divergence $\divergence v$ is an axisymmetric scalar function.

\end{enumerate}
\end{Rema}
\subsection{Dynamics of the vorticity}
We will start with recalling  some algebraic properties of the axisymmetric vector fields and especially we will discuss  the special structure of the vorticity of the \mbox{system \eqref{eqs:3}}. First, we make some general statements: let $w=w^r(r,z) e_r+w^\theta(r,z)e_\theta+w^z(r,z) e_z$ and $v=v^r(r,z) e_r+v^z(r,z) e_z$ be two smooth  vector fields, then \begin{equation}\label{eqq1}
w\cdot\nabla= w^r\partial_r+\frac{w^\theta}{r}\partial_\theta+w^z\partial_z,\quad \divergence v=\partial_r v^r+\frac{v^r}{r}+\partial_z v^z.
\end{equation}
Easy computations show that the vorticity $\Omega:=\hbox{curl } v$ of the  vector field $v$ takes  the form,
$$
\Omega=(\partial_zv^r-\partial_rv^z) e_{\theta}.
$$
Now let us study  under this special geometry  the dynamics of the vorticity of the \mbox{system \eqref{eqs:3}}. Denote by 
$
\Omegaepsilon=(\partial_zv^r_\varepsilon-\partial_rv^z_\varepsilon) e_{\theta}:=\Omegaepsilon^\theta e_\theta,
$ the vorticity of $\vepsilon.$
Then applying the curl operator to the velocity equation yields
$$
\partial_t\Omegaepsilon+\textnormal{curl}(\vepsilon\cdot\nabla\vepsilon)=0.
$$
By straightforward computations we get the identity
$$
\textnormal{curl}(\vepsilon\cdot\nabla\vepsilon)=\vepsilon\cdot\nabla\Omegaepsilon-\Omegaepsilon\cdot\nabla\vepsilon+\Omegaepsilon\divergence\vepsilon.
$$
Therefore we get
$$
\partial_t\Omegaepsilon+\vepsilon\cdot\nabla\Omegaepsilon+\Omegaepsilon\divergence\vepsilon=\Omegaepsilon\cdot\nabla\vepsilon
$$
Now,  since $\Omegaepsilon=\Omegaepsilon^\theta e_\theta
$ and by \eqref{eqq1} it follows that the stretching term takes the form,
 \begin{eqnarray*}
 \Omegaepsilon\cdot\nabla\vepsilon&=&\Omegaepsilon^\theta\frac1r\partial_\theta(v^r_\epsilon e_r+v^z_\epsilon e_z)\\
 &=&\Omegaepsilon^\theta\,\frac{ v^r_\epsilon}{r}\, e_\theta\\
 &=&
 \frac{v^r_\epsilon}{r}\Omegaepsilon.
\end{eqnarray*} 
Consequently the vorticity equation becomes
\begin{equation}\label{vor0}
\partial_t\Omegaepsilon+\vepsilon\cdot\nabla\Omegaepsilon+\Omegaepsilon\divergence\vepsilon=\frac{v^r_\epsilon}{r}\Omegaepsilon.
\end{equation}
Thus the quantity $ \frac{\Omegaepsilon}{r}$ is governed by the following equation\begin{equation}\label{vor1}
(\partial_t+\vepsilon\cdot\nabla+\divergence\vepsilon)\big(\frac{\Omegaepsilon}{r}\big)=0.
\end{equation}
We observe that this equation is analogous  to the vorticity equation in dimension two,
In the case of incompressible axisymmetric   flows the quantity $ \frac{\Omegaepsilon}{r}$ is just transported by the flow and this gives new conservation laws that lead to global existence of smooth solutions. However in our case we can not get global estimates due to the presence of $\divergence \vepsilon$, yet  we have already seen that this quantity is damped by higher oscillations and thus we can expect that the  time lifespan can  grow when the Mach number becomes small.  This will be clearly discussed later in the next sections.
\subsection{Geometric properties  }
When we deal with the critical regularities, it seems that the only use of  the equation \eqref{vor1} is not sufficient for our study and so  we need more refined properties of the vorticity. We start with the following results.\begin{prop}\label{mim0} Let $v=(v^1,v^2,v^3)$ be a smooth axisymmetric vector field without swirl. 
Then the following assertions hold true.
\begin{enumerate}
\item  The vector $\omega=\nabla\times v=(\omega^1,\omega^2,\omega^3)$ satisfies $\omega(x)\times e_{\theta}(x)=(0,0,0).$ In particular, we have for every $(x_{1},x_{2},z)\in \Bbb R^3$,
$$
\omega^3=0,\quad x_{1}\omega^1(x_{1},x_{2},z)+x_{2}\omega^2(x_{1},x_{2},z)=0\quad\hbox{and}
$$
$$
 \omega^1(x_{1},0,z)=\omega^2(0,x_{2},z)=0.
$$

\item For every $q\geq -1$,  $\Delta_{q}u$ is axisymmetric without swirl and
$$
(\Delta_{q}u^1)(0,x_{2},z)=(\Delta_{q}u^2)(x_{1},0,z)=0.
$$
\item Let $\Omega$ be a free divergence vector field such that $\Omega(x)\times e_\theta(x)=0$. Then for $q\geq-1$ we have
$$
\Delta_q\Omega(x)\times e_\theta(x)=0.
$$

\end{enumerate}
\end{prop}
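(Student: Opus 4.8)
The plan is to handle the three assertions in turn, the common engine being that each Littlewood--Paley block $\Delta_q$ is convolution against a \emph{radial} kernel $h_q$ (equal to $2^{3q}h(2^q\cdot)$ for $q\ge0$ and to the radial kernel of $\chi(\mathrm D)$ for $q=-1$), and therefore commutes with the action of the orthogonal group on vector fields.

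For assertion (1) the decisive fact has already been established in the computation preceding \eqref{vor0}: for a smooth axisymmetric vector field without swirl one has $\omega=\mathrm{curl}\,v=(\partial_z v^r-\partial_r v^z)\,e_\theta=:\omega^\theta e_\theta$. Thus $\omega$ is everywhere proportional to $e_\theta$, so $\omega\times e_\theta=0$ follows at once. Writing $e_\theta=(-x_2/r,x_1/r,0)$ gives $\omega^1=-\omega^\theta x_2/r$, $\omega^2=\omega^\theta x_1/r$ and $\omega^3=0$; the identities $x_1\omega^1+x_2\omega^2=0$ and $\omega^1(x_1,0,z)=\omega^2(0,x_2,z)=0$ then follow by inspection, the values on the axis $r=0$ being recovered by continuity of the smooth field $\omega$.

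For assertion (2) I would first characterise ``axisymmetric without swirl'' by equivariance under the horizontal orthogonal group. Rotation equivariance $u(\mathcal{R}_\alpha x)=\mathcal{R}_\alpha u(x)$ encodes axisymmetry, while equivariance under the reflections $\sigma_1=\mathrm{diag}(-1,1,1)$ and $\sigma_2=\mathrm{diag}(1,-1,1)$, i.e. $u(\sigma_j x)=\sigma_j u(x)$, encodes the absence of swirl: indeed $e_r,e_z$ are $\sigma_j$-equivariant while $e_\theta(\sigma_2 x)=-\sigma_2 e_\theta(x)$, so a nonzero angular part would violate the symmetry. Since $\Delta_q$ is convolution against the radial $h_q$, a change of variables $y\mapsto Oy$ for $O\in\{\mathcal{R}_\alpha,\sigma_1,\sigma_2\}$, using $h_q(Ox)=h_q(x)$ and the invariance of Lebesgue measure, yields $(\Delta_q u)(Ox)=O\,(\Delta_q u)(x)$. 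Hence $\Delta_q u$ is again axisymmetric without swirl. The boundary values then fall out of the reflection equivariance: at $x_1=0$ one has $\sigma_1 x=x$, so $\Delta_q u^1(0,x_2,z)=-\Delta_q u^1(0,x_2,z)=0$, and symmetrically $\Delta_q u^2(x_1,0,z)=0$.

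For assertion (3) I would begin by upgrading the hypotheses. If $\Omega$ is divergence free with $\Omega\times e_\theta=0$, then $\Omega=\Omega^\theta e_\theta$, and a direct computation in the cylindrical frame gives $\mathrm{div}\,\Omega=\tfrac1r\partial_\theta\Omega^\theta$, so the constraint forces $\Omega^\theta$ to be independent of $\theta$. Consequently $\Omega$ is axisymmetric and, because $e_\theta(\sigma_2 x)=-\sigma_2 e_\theta(x)$, it is \emph{anti}-equivariant under $\sigma_2$, namely $\Omega(\sigma_2 x)=-\sigma_2\Omega(x)$. The radial-convolution argument of (2) transfers both properties to $\Delta_q\Omega$: one obtains $(\Delta_q\Omega)(\mathcal{R}_\alpha x)=\mathcal{R}_\alpha(\Delta_q\Omega)(x)$ and $(\Delta_q\Omega)(\sigma_2 x)=-\sigma_2(\Delta_q\Omega)(x)$. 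Decomposing $\Delta_q\Omega=W^r e_r+W^\theta e_\theta+W^z e_z$ with $W^\bullet=W^\bullet(r,z)$ (by axisymmetry) and matching signs under $\sigma_2$, where $e_r,e_z$ are equivariant but $e_\theta$ is anti-equivariant, forces $W^r=W^z=0$. Thus $\Delta_q\Omega$ is purely azimuthal and $\Delta_q\Omega\times e_\theta=0$.

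The main obstacle I anticipate is bookkeeping rather than depth: one must handle the action of the orthogonal group on \emph{vector}-valued functions, where $O$ acts simultaneously on the argument and on the target vector, and verify carefully that radiality of $h_q$ lets the matrix $O$ be pulled through the convolution. The second delicate point is the behaviour near the symmetry axis $r=0$, where $e_r$ and $e_\theta$ are singular: there the pointwise identities are first obtained on $\{r>0\}$ and then extended using the continuity of the smooth fields $\Delta_q u$ and $\Delta_q\Omega$.
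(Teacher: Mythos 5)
Your proof is correct; the substantive comparison is in assertion (3), where your method genuinely differs from the paper's. For (1) and (2) the paper gives no argument at all, simply citing \cite{AHK}, and your self-contained proofs there (the azimuthal form of the curl for (1), the radial-kernel equivariance argument for (2)) are the standard ones. For (3) the paper computes on the Fourier side: since $(\Delta_q\Omega)^3=\Delta_q\Omega^3=0$, the claim reduces to $x_1\Delta_q\Omega^1+x_2\Delta_q\Omega^2=0$, i.e.\ to $\partial_{\xi_1}\bigl(\varphi(2^{-q}|\xi|)\widehat{\Omega^1}\bigr)+\partial_{\xi_2}\bigl(\varphi(2^{-q}|\xi|)\widehat{\Omega^2}\bigr)=0$, and the two Leibniz terms vanish separately: radiality of $\varphi$ makes $\nabla_\xi\varphi(2^{-q}|\xi|)$ parallel to $\xi$, so the term where the derivatives hit the multiplier carries the factor $\xi_1\widehat{\Omega^1}+\xi_2\widehat{\Omega^2}=-i\,\widehat{\diver\Omega}=0$, while the term where they hit $\widehat{\Omega^i}$ equals $-i\,\mathcal{F}\bigl(x_1\Omega^1+x_2\Omega^2\bigr)=0$. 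You instead turn the hypotheses into symmetries: from $\diver(\Omega^\theta e_\theta)=\frac1r\partial_\theta\Omega^\theta=0$ you get $\Omega^\theta=\Omega^\theta(r,z)$, hence $\Omega$ is rotation-equivariant and anti-equivariant under $\sigma_2=\mathrm{diag}(1,-1,1)$; both symmetries pass through convolution with the radial kernel of $\Delta_q$, and the two combined kill the $e_r$ and $e_z$ components of $\Delta_q\Omega$. One point worth stressing, which your write-up does get right: the $\sigma_2$-anti-equivariance alone only makes those components odd under the reflection; it is its conjunction with the $\theta$-independence furnished by rotation-equivariance that forces them to vanish identically. The two proofs consume the same two ingredients --- radiality of $\varphi$ and $\diver\Omega=0$ --- but in different places: the paper's computation is shorter and never has to discuss the singular axis $r=0$, whereas yours is more geometric, subsumes (2) and (3) under one and the same equivariance principle, and makes transparent exactly where the divergence-free assumption enters (it eliminates the $\theta$-dependence of $\Omega^\theta$).
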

\begin{proof}
The results $(1)-(2)$ and $(3)$ are proved  for example in \cite{AHK}. It remains to prove the last assertion. First, it is easy to check that 
$(\Delta_q\Omega)^3=\Delta_q(\Omega^3)$ and thus the last component of $\Delta_q\Omega$ is zero and consequently our claim reduces to the following identity,
$$
x_1\Delta_q\Omega^1+x_2\Delta_q\Omega^2=0.
$$
Using Fourier transform this is equivalent to 
$$
\partial_1\big(\varphi(2^{-q}|\xi|)\widehat{\Omega^1}(\xi)\big)+\partial_2\big(\varphi(2^{-q}|\xi|)\widehat{\Omega^2}(\xi)\big)=0.
$$
Straightforward computations and the fact that $\Omega^3\equiv 0$ yield
\begin{eqnarray*}
\partial_1\big(\varphi(2^{-q}|\xi|)\widehat{\Omega^1}(\xi)\big)+\partial_2\big(\varphi(2^{-q}|\xi|)\widehat{\Omega^2}(\xi)\big)&=&2^{-q}|\xi|^{-1}\varphi^\prime(2^{-q}|\xi|)\big(\xi^1\widehat{\Omega^1}(\xi)+\xi^2\widehat{\Omega^2}(\xi)  \big)\\
&+&\varphi(2^{-q}|\xi|)\big(\partial_1\widehat{\Omega^1}(\xi)+\partial_2\widehat{\Omega^2}(\xi)\big)\\
&=&-i2^{-q}|\xi|^{-1}\varphi^\prime(2^{-q}|\xi|)\widehat{\diver \Omega}(\xi)\\&-&i\varphi(2^{-q}|\xi|)\mathcal{F}\big({x_1\Omega^1+x_2{\Omega^2}}\big)(\xi)\\
&=&0.
\end{eqnarray*}
The last identity is an easy consequence from the hypotheses $\diver \Omega=0$ and $\Omega\times e_\theta=0.$

\end{proof}
The next result deals with some properties of the flow $\psi$ associated to a time-dependent axisymmetric flow $(t,x)\mapsto v(t,x)$. It is defined as follow through the integral equation
$$
\psi(t,x)=x+\int_0^t v\big(\tau,\psi(\tau,x)\big)d\tau.
$$
It is well-known from the theory of differential equation that if $I$ is an interval and $v\in L^1(I, \textnormal{Lip})$ then the flow is well-defined on the full interval $I$.
Denote by $\mathcal{G}$ the group of rotations with axis $(Oz)$, that is
$$
\mathcal{G}:=\big\{\mathcal{R}_\theta,\theta\in[0,2\pi]\big\}.
$$Now we will prove the following result.
\begin{prop}\label{mim01}
Let $(t,x)\mapsto v(t,x)$ be a time-dependent smooth axisymmetric vector field without swirl and $(t,x)\mapsto \psi(t,x)$ its flow. Then the following results hold true.
\begin{enumerate}
\item For all $\mathcal{R}_\theta\in\mathcal{G}$ we have
$$
\mathcal{R}_\theta(\psi(t,x))=\psi(t,\mathcal{R}_\theta x), \quad\forall x\in\RR^3.
$$
\item For every $x\in\RR^3, t\geq 0$ we have
$$
\psi(t,x)\cdot e_\theta(x)=0.
$$
\item For every $t$, the vector field  $x\mapsto v(t,\psi(t,x))$ is axisymmetric without swirl.
\item For all $q\in \NN$ we have
$$
S_{q}\big(v(t)\circ\psi(t)\big)(x)\cdot e_\theta(x)=0,\quad\forall x\in\RR^3.
$$

\end{enumerate}

\end{prop}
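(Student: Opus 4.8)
The four assertions are naturally proved in turn, and (3)--(4) will follow readily once (1) and (2) are in hand, so the substance lies in the first two. \emph{For assertion (1)} the plan is to invoke uniqueness of the flow. Fix $\mathcal{R}_\theta\in\mathcal{G}$ and set $\Phi(t,x):=\mathcal{R}_{-\theta}\psi(t,\mathcal{R}_\theta x)$. Using the integral equation defining $\psi$ and the linearity of $\mathcal{R}_{-\theta}$, I would write
$$
\Phi(t,x)=x+\int_0^t\mathcal{R}_{-\theta}\,v\big(\tau,\psi(\tau,\mathcal{R}_\theta x)\big)\,d\tau.
$$
Since $\psi(\tau,\mathcal{R}_\theta x)=\mathcal{R}_\theta\Phi(\tau,x)$ by the very definition of $\Phi$, the axisymmetry of $v$ in the form $\mathcal{R}_{-\theta}\{v(\tau,\mathcal{R}_\theta y)\}=v(\tau,y)$ (applied with $y=\Phi(\tau,x)$) turns the integrand into $v(\tau,\Phi(\tau,x))$. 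Hence $\Phi$ solves the same integral equation as $\psi$ with the same initial value $x$; as $v\in L^1(I,\textnormal{Lip})$ guarantees uniqueness of the flow, I conclude $\Phi=\psi$, which is exactly (1).

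\emph{For assertion (2)} I would argue by a scalar linear ODE along the trajectory. Fix $x$, write $\psi=(\psi^1,\psi^2,\psi^3)$ and $r_\psi=((\psi^1)^2+(\psi^2)^2)^{1/2}$, and set $g(t):=\psi(t,x)\cdot e_\theta(x)$; note $g(0)=x\cdot e_\theta(x)=0$. Differentiating and using $\partial_t\psi=v(t,\psi)$ together with the decomposition $v(t,\psi)=v^r(\psi)e_r(\psi)+v^z(\psi)e_z$ and $e_z\cdot e_\theta(x)=0$ yields
$$
g'(t)=v^r(\psi)\,e_r(\psi)\cdot e_\theta(x).
$$
The key algebraic identity is $e_r(\psi)\cdot e_\theta(x)=g(t)/r_\psi$, which follows from $e_r(\psi)=r_\psi^{-1}(\psi^1,\psi^2,0)$ together with $(\psi^1,\psi^2,0)\cdot e_\theta(x)=\psi\cdot e_\theta(x)=g(t)$ (the third component of $e_\theta(x)$ vanishing). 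Thus $g'(t)=\big(v^r(\psi)/r_\psi\big)g(t)$; since $v$ is a smooth axisymmetric field without swirl, the ratio $v^r/r$ extends to a bounded continuous function, the coefficient is integrable in time, and Gronwall's lemma forces $g\equiv0$.

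\emph{For assertions (3) and (4)} set $w(x):=v(t,\psi(t,x))$. Axisymmetry of $w$ is immediate from (1): $\mathcal{R}_{-\theta}\{w(\mathcal{R}_\theta x)\}=\mathcal{R}_{-\theta}v(t,\mathcal{R}_\theta\psi(t,x))=v(t,\psi(t,x))=w(x)$, once more by axisymmetry of $v$. That $w$ carries no swirl, i.e. $w(x)\cdot e_\theta(x)=0$, comes directly from (2), since the computation above shows $v(t,\psi)\cdot e_\theta(x)=v^r(\psi)g(t)/r_\psi=0$. Finally (4) follows by localizing (3): $w$ is a smooth axisymmetric field without swirl, so Proposition \ref{mim0}(2) applies blockwise and gives that each $\Delta_j w$ is again axisymmetric without swirl; summing over $-1\le j\le q-1$ shows $S_q w$ is axisymmetric without swirl as well, whence $S_q w(x)\cdot e_\theta(x)=0$.

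The only genuinely delicate point is assertion (2). The crux is to identify the correct scalar $g=\psi\cdot e_\theta(x)$, namely the projection onto the \emph{initial} angular direction $e_\theta(x)$ (which need not coincide with $e_\theta(\psi(t,x))$ as the particle moves), and to check that its evolution closes into a \emph{homogeneous} linear ODE. Geometrically this says the trajectory never leaves the meridian plane through $x$, and the boundedness of $v^r/r$ for smooth swirl-free fields is precisely what makes Gronwall applicable and rules out any escape.
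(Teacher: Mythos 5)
Your proof is correct, and for assertions (1), (3) and (4) it follows essentially the same route as the paper: for (1) the paper also invokes uniqueness of the flow together with axisymmetry of $v$ (it compares $\mathcal{R}_\theta\psi(t,x)$ and $\psi(t,\mathcal{R}_\theta x)$ as solutions of the same ODE, while you conjugate by $\mathcal{R}_{-\theta}$ in the integral equation — the same argument), and for (3)--(4) both you and the paper combine (1), (2) and Proposition \ref{mim0}-(2) applied to the dyadic blocks. The genuine difference is assertion (2): the paper does not prove it at all, but simply cites \cite{AHK1}, whereas you supply a self-contained argument, identifying the scalar $g(t)=\psi(t,x)\cdot e_\theta(x)$, closing it into the homogeneous linear ODE $g'=\big(v^r(\psi)/r_\psi\big)g$ via the identity $e_r(\psi)\cdot e_\theta(x)=g/r_\psi$, and concluding by Gronwall since $g(0)=0$ and $v^r/r$ is bounded for a smooth swirl-free axisymmetric field. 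This is a worthwhile addition. The one point to polish is the degenerate case $r_\psi(t)=0$, where $e_r(\psi)$ and the quotient are undefined: either remark that $\frac{d}{dt}\log r_\psi=v^r(\psi)/r_\psi$ is bounded, so a trajectory starting off the axis never reaches it in finite time, or note that at such an instant $v(t,\psi)$ is purely vertical and $g(t)=0$, so the differential inequality $|g'(t)|\le \|v^r/r\|_{L^\infty}\,|g(t)|$ — which is all that Gronwall requires — holds in every case.
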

\begin{proof}
$\bf(1)$ We set $\psi_\theta(t,x):=\mathcal{R}_\theta\psi(t,x). $ Thus differentiating with respect to $t$ we get
$$
\partial_t\psi_\theta(t,x)=\mathcal{R}_\theta\big( v(t,\psi(t,x))\big).
$$
Since $v$ is axisymmetric then
$$
\mathcal{R}_\theta\big( v(t,X)\big)=v(t,\mathcal{R}_\theta X),\quad\forall X\in\RR^3
$$
and consequently
$$
\partial_t\psi_\theta(t,x)= v\big(t,\mathcal{R}_\theta(\psi(t,x))\big)=v(t,\psi_\theta(t,x)),\quad\psi_\theta(0,x)=\mathcal{R}_\theta x.
$$
It is easy to see that $(t,x)\mapsto\psi(t,\mathcal{R}_\theta x)$ satisfies the same differential equation of $\psi_\theta$ and thus by uniqueness we get the desired identity.

$\bf(2)$ This result, which means that the trajectory of a given particle $x$ remains in the vertical plan, was proved in Proposition \cite{AHK1}.

$\bf(3)$ Let $\mathcal{R}_\theta\in \mathcal{G}.$ Since $v$ is axisymmetric then using $(1)$ of Proposition \ref{mim01} yields to
\begin{eqnarray*}
\mathcal{R}_\theta\big(v(t,\psi(t,x))\big)&=&v(t,\mathcal{R}_\theta(\psi(t,x)))\\
&=&v(t,\psi(t,\mathcal{R}_\theta x)).
\end{eqnarray*}
It remains to show that the angular component of the vector field $x\mapsto v(t,\psi(t,x))$ is zero. Since the angular component of  $v$ is zero then
$$
v(t,X)\cdot e_\theta(X)=0,\quad \forall X\in\RR^3
$$
and by $(2)$ of Proposition \ref{mim01} we have $e_\theta(\psi(t,x))=\pm e_\theta(x).$ It follows that the angular component of the vector field $x\mapsto v(t,\psi(t,x))$ is vanishes and consequently it is axisymmetric without swirl.

$\bf(4)$ Combining the preceding result with the part $(2)$ of Proposition \ref{mim0} we obtain that the vector field $x\mapsto S_q \big(v(t)\circ\psi(t))\big)(x)$ is also axisymmetric without swirl and consequently its angular component is zero.
\end{proof}
The end of this section is devoted to the  study of some geometric properties of  a  compressible transport model. It is   a vorticity equation like  in which no relations between the vector field $v$ and the solution $\Omega$ are supposed.
$$
\mbox{(CT)}\left\lbrace
\begin{array}{l}
\partial_t \Omega+(v\cdot\nabla)\Omega+\Omega\,\diver v=\Omega\cdot\nabla v,\\
{\Omega}_{| t=0}=\Omega_0.
\end{array}
\right.
$$
We will assume that $v$ is axisymmetric and the unknown function  $\Omega=(\Omega^1,\Omega^2,\Omega^3)$ is a vector field. The following result describes the persistence    of some  initial geometric conditions of the \mbox{solution $\Omega.$}
\begin{prop}\label{mim}
Let $T>0$ and $v$ be an axisymmetric vector field without swirl and belonging to the space $L^1([0,T], \textnormal{Lip}(\RR^3))$. Denote by   $\Omega $  the unique  solution  of \textnormal{(CT)} corresponding to smooth initial data $\Omega_{0}.$ Then we have the following properties:
\begin{enumerate}
\item If $\diver \Omega_{0}=0,$ then \quad$\diver \Omega(t)=0,\quad \forall\,\,t\in [0,T].$
\item If $\Omega_{0}\times e_{\theta}=0,$ then we have 
$$ \forall t\in [0,T],\qquad \Omega(t)\times e_{\theta}=0.
$$  Consequently, $\Omega^1(t,x_{1},0,z)=\Omega^2(t,0,x_{2},z)=0,$ and 
$$
\partial_t \Omega+(v\cdot\nabla)\Omega+\Omega\,\diver v= \frac{v^r}{r}\Omega.
$$
\end{enumerate}
\end{prop}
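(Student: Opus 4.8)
The plan is to derive a closed transport equation for the scalar $\divergence\Omega$. Applying $\divergence$ to the vector equation in (CT) and using that spatial derivatives commute, I would compute the three terms separately: $\divergence\big((v\cdot\nabla)\Omega\big)=(v\cdot\nabla)(\divergence\Omega)+\partial_iv^k\,\partial_k\Omega^i$, then $\divergence(\Omega\,\divergence v)=(\divergence\Omega)(\divergence v)+(\Omega\cdot\nabla)(\divergence v)$, and finally $\divergence(\Omega\cdot\nabla v)=\partial_i\Omega^k\,\partial_kv^i+(\Omega\cdot\nabla)(\divergence v)$. The two copies of $(\Omega\cdot\nabla)(\divergence v)$ cancel, and after relabeling the summation indices the quadratic terms $\partial_iv^k\,\partial_k\Omega^i$ and $\partial_i\Omega^k\,\partial_kv^i$ are seen to be identical and cancel as well. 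What remains is exactly
\[
\big(\partial_t+v\cdot\nabla+\divergence v\big)(\divergence\Omega)=0 .
\]
Since $v\in L^1([0,T],\textnormal{Lip})$ the flow $\psi$ is well defined, and integrating this linear equation along the characteristics gives $(\divergence\Omega)(t,\psi(t,x))=(\divergence\Omega_0)(x)\,e^{-\int_0^t(\divergence v)(s,\psi(s,x))ds}$; hence $\divergence\Omega_0=0$ forces $\divergence\Omega(t)\equiv0$ on $[0,T]$.

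\textbf{Part (2).} For the angular confinement I would first establish a Cauchy-type representation formula for the solution of (CT). Writing $A(t,x)=\nabla_x\psi(t,x)$ for the Jacobian matrix and $J=\det A$, the flow ODE yields $\partial_tA=(\nabla v)(t,\psi)\,A$ (with $(\nabla v)^i_k=\partial_kv^i$) and $\partial_tJ=J\,(\divergence v)(t,\psi)$. A direct differentiation then shows that $\eta:=J^{-1}A\,\Omega_0$ solves, along each trajectory, the same linear ODE $\tfrac{d}{dt}\eta=\big[(\nabla v)-(\divergence v)\,\mathrm{Id}\big](t,\psi)\,\eta$ that (CT) dictates for $t\mapsto\Omega(t,\psi(t,x))$; since both agree at $t=0$, uniqueness for this pointwise ODE gives
\[
\Omega(t,\psi(t,x))=\frac{1}{J(t,x)}\,\nabla_x\psi(t,x)\,\Omega_0(x).
\]
(If needed, one first argues for smooth $v$ so that $\nabla_x\psi$ is genuinely $C^0$, then passes to the limit.)

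\textbf{The main obstacle} is to show this right-hand side stays parallel to $e_\theta$. The key geometric identity I would extract comes from the equivariance proved in Proposition \ref{mim01}(1): differentiating $\mathcal{R}_\alpha\psi(t,x)=\psi(t,\mathcal{R}_\alpha x)$ at $\alpha=0$ and using that the infinitesimal generator $M$ of the rotations about $(Oz)$ satisfies $Mx=r\,e_\theta(x)$ gives $\nabla_x\psi(t,x)\,e_\theta(x)=\tfrac{r_\psi}{r}\,e_\theta(\psi(t,x))$, where $r_\psi$ is the cylindrical radius of $\psi(t,x)$. The hypothesis $\Omega_0\times e_\theta=0$ means $\Omega_0=\Omega_0^\theta\,e_\theta$, so the representation formula reduces to $\Omega(t,\psi(t,x))=\tfrac{\Omega_0^\theta(x)}{J}\tfrac{r_\psi}{r}\,e_\theta(\psi(t,x))$, which is parallel to $e_\theta(\psi(t,x))$. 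Since $\psi(t,\cdot)$ is a homeomorphism of $\RR^3$, every point is of the form $\psi(t,x)$, whence $\Omega(t)\times e_\theta=0$ on $[0,T]$.

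\textbf{Consequences.} Evaluating $\Omega\times e_\theta=0$ on the half-planes $\{x_2=0\}$ and $\{x_1=0\}$, where $e_\theta$ is proportional to $e_2$ and to $e_1$ respectively, yields $\Omega^1(t,x_1,0,z)=\Omega^2(t,0,x_2,z)=0$. Finally, writing $\Omega=\Omega^\theta e_\theta$ and computing the stretching term exactly as in the previous subsection, $\Omega\cdot\nabla v=\tfrac{\Omega^\theta}{r}\partial_\theta(v^re_r+v^ze_z)=\tfrac{v^r}{r}\Omega^\theta e_\theta=\tfrac{v^r}{r}\Omega$, which turns (CT) into $\partial_t\Omega+(v\cdot\nabla)\Omega+\Omega\,\divergence v=\tfrac{v^r}{r}\Omega$, as claimed.
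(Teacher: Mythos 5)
Your proof is correct, and while Part (1) follows the paper's argument essentially verbatim (apply $\divergence$, cancel the two quadratic terms and the two $(\Omega\cdot\nabla)(\divergence v)$ terms, solve the resulting damped transport equation — the paper concludes by maximum principle and Gronwall rather than by integrating along characteristics, an immaterial difference), Part (2) takes a genuinely different route. The paper works componentwise in the cylindrical frame: it derives the coupled equations $\partial_t\Omega^r+v\cdot\nabla\Omega^r+\Omega^r\divergence v=\Omega^r\partial_rv^r+\Omega^z\partial_zv^r$ and its analogue for $\Omega^z$, observes that $\Omega_0\times e_\theta=0$ makes the initial data of this closed linear system vanish, and kills $(\Omega^r,\Omega^z)$ by maximum principle and Gronwall. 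You instead prove the compressible Cauchy formula $\Omega(t,\psi(t,x))=J^{-1}\nabla_x\psi(t,x)\,\Omega_0(x)$ and combine it with the rotational equivariance $\mathcal{R}_\alpha\psi(t,x)=\psi(t,\mathcal{R}_\alpha x)$ of Proposition \ref{mim01}(1), differentiated in $\alpha$, to get $\nabla_x\psi(t,x)\,e_\theta(x)=\frac{r_\psi}{r}e_\theta(\psi(t,x))$, so the $e_\theta$-direction is transported exactly. Your argument is more geometric and yields strictly more: an explicit pointwise formula $\Omega^\theta(t,\psi(t,x))=J^{-1}\frac{r_\psi}{r}\Omega_0^\theta(x)$, from which the conservation law \eqref{vor1} and the $L^p$ estimates of Proposition \ref{vort12}(1) can be read off directly. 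The price is that it needs differentiability of the flow map (hence your smoothing-and-limiting remark, which is the right fix given that the proposition only assumes $v\in L^1([0,T],\textnormal{Lip})$) and a word about the axis $r=0$, where $e_\theta$ is undefined: since the flow commutes with rotations it preserves the axis, so every point off the axis is $\psi(t,x)$ for some $x$ off the axis and the identity holds wherever it is meaningful. The paper's componentwise Gronwall argument avoids both issues, which is presumably why it was chosen, but it buries the geometry that your derivation makes explicit.
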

\begin{proof}
{\bf $(1)$} We apply the divergence operator to the equation (V) 
$$
\partial_{t}\diver\Omega+\diver (u\cdot\nabla\Omega+\Omega\,\diver u)=\diver(\Omega\cdot\nabla u).
$$
Straightforward computations yield to
\begin{eqnarray*}
\diver (u\cdot\nabla\Omega)&=&u\cdot\nabla\diver \Omega+\sum_{i,j=1}^3\partial_i u^j\,\partial_j\Omega^i\\
\diver (\Omega\,\diver u)&=&\diver\Omega\,\diver u+\Omega\cdot\nabla\diver u\\
\diver(\Omega\cdot\nabla u)&=&\Omega\cdot\nabla\diver u+\sum_{i,j=1}^3\partial_i u^j\,\partial_j\Omega^i.
\end{eqnarray*}
Thus, the quantity $\diver\Omega$  satisfies the equation
$$
\partial_{t}\diver\Omega+u\cdot\nabla\diver \Omega+\diver\Omega\,\diver u=0.
$$
Using maximum principle and Gronwall inequality we get
$$
\|\diver\Omega(t)\|_{L^\infty}\le \\\diver\Omega_0\|_{L^\infty}e^{\|\diver u\|_{L^1_tL^\infty}}.
$$
Therefore if $\diver\Omega_0=0$ then for every time $\Omega(t)$ remains incompressible.

{\bf $(2)$} We denote by $(\Omega^r,\Omega^\theta,\Omega^z)$ the coordinates of $\Omega$ in cylindrical basis. It is obvious that 
$\Omega^r=\Omega\cdot e_r.$ Recall that in cylindrical coordinates the \mbox{operator $u\cdot\nabla$} has the form
$$
u\cdot\nabla=u^r\partial_{r}+\frac1r u^\theta\partial_{\theta}+u^z\partial_{z}=u^r\partial_{r}+u^z\partial_{z}.
$$
We have used in the last equality the fact that for axisymmetric flows the angular component is zero. Hence we get
\begin{eqnarray*}
(u\cdot\nabla\Omega)\cdot e_{r}&=&u^r\partial_{r}\Omega\cdot e_{r}+u^z\partial_{z}\Omega\cdot e_{r}\\
&=&(u^r\partial_{r}+u^z\partial_{z})(\Omega\cdot e_{r})\\
&=&u\cdot\nabla \Omega^r,
\end{eqnarray*}
Where we use $\partial_{r}e_{r}=\partial_{z}e_{r}=0.$
 Now it remains to compute $(\Omega\cdot\nabla u)\cdot e_{r}.$
 By a straightforward computations we get,
  \begin{eqnarray*}
 (\Omega\cdot\nabla u)\cdot e_{r}&=&\Omega^{r}\,\partial_{r}u\cdot e_{r}+\frac1r\Omega^{\theta}\,\partial_{\theta}u\cdot e_{r}+\Omega^3\,\partial_{3}u\cdot e_{r}\\
 &=&\Omega^{r}\partial_{r}u^r+\Omega^3\partial_{3}u^r.
 \end{eqnarray*}
Thus the component $\Omega^r$ obeys to the equation
$$
\partial_{t}\Omega^r+u\cdot \nabla \Omega^r+\diver u\, \Omega^r=\Omega^r\partial_{r}u^r+\Omega^3
\partial_{3}u^r.
$$
From the maximum principle  we deduce
$$
\|\Omega^r(t)\|_{L^\infty}\leq\int_{0}^t\big(\|\Omega^r(\tau)\|_{L^\infty}+\|\Omega^3(\tau)\|_{L^\infty}\big)\|\nabla u(\tau)\|_{L^\infty}d\tau.
$$

On the other hand the component $\Omega^3$ satisfies the equation
\begin{eqnarray*}
\partial_{t}\Omega^3+u\cdot\nabla\Omega^3+\diver u\,\Omega^3&=&\Omega^3\partial_{3}u^3+\Omega^r\partial_{r}u^3.
\end{eqnarray*}
 This leads to
 $$
 \|\Omega^3(t)\|_{L^\infty}\leq \int_{0}^t\big(\|\Omega^3(\tau)\|_{L^\infty}+\|\Omega^r(\tau)\|_{L^\infty}\big)\|\nabla u(\tau)\|_{L^\infty}d\tau.
 $$
 Combining these estimates and using Gronwall's inequality we obtain for every $t\in\RR_{+},$ $\Omega^3(t)=\Omega^r(t)=0,$  which is the desired result.
 
 Under these assumptions the stretching term becomes
 \begin{eqnarray*}
 \Omega\cdot \nabla u&=&\frac 1r\Omega^\theta\partial_{\theta}(u^r e_{r})\\
 &=&\frac1r u^r\Omega^\theta e_{\theta}=\frac1r u^r\Omega.
 \end{eqnarray*}
 \end{proof}

\subsection{Some a priori estimates}
Our goal in this section is to establish some a priori estimates that will be used later for both cases of critical and sub-critical regularities. Our result reads as follows.
\begin{prop}\label{vort12}
Let $(\vepsilon,\cepsilon)$ be a smooth axisymmetric solution of the system \eqref{eqs:3}. Then the following estimates hold true.
\begin{enumerate}
\item Denote by $\Omega_\varepsilon$ the vorticity of $\vepsilon$, then for $t\geq  0$ and  $p\in[1,\infty]$ we have 
$$
\Big\|\frac{\Omega_\varepsilon}{r}(t)\Big\|_{L^{p}}\le \Big\|\frac{\Omega_\varepsilon^0}{r}\Big\|_{L^{p}}e^{(1-\frac1p)\|\diver\vepsilon\|_{L^1_t L^\infty}}
$$
and  for $1<p<\infty, q\in [1,+\infty]$ 
$$
\Big\|\frac{\Omega_\varepsilon}{r}(t)\Big\|_{L^{p,q}}\le C\Big\|\frac{\Omega_\varepsilon^0}{r}\Big\|_{L^{p,q}}e^{(1-\frac1p)\|\diver\vepsilon\|_{L^1_t L^\infty}}.
$$
\item For $t\geq0$, we have
$$
\|\frac{v^r_\varepsilon}{r}\|_{L^\infty}\lesssim \|\vepsilon\|_{L^2}+\|\divergence\vepsilon\|_{B_{\infty,1}^0}+
\Big\|\frac{\Omegaepsilon^0}{r}\Big\|_{L^{3,1}}e^{\|\divergence\vepsilon\|_{L^1_tL^\infty}}.
$$
\item For $t\geq0$, we have
$$
\|\Omegaepsilon(t)\|_{L^\infty}\le\|\Omegaepsilon^0\|_{L^\infty}e^{C\|\vepsilon\|_{L^1_tL^2}}e^{C\Big\{\|\divergence\vepsilon\|_{L^1_tB_{\infty,1}^0}+t\,\|{\Omegaepsilon^0}/{r}\|_{L^{3,1}}\,e^{\|\divergence\vepsilon\|_{L^1_tL^\infty}}\Big\}}.
$$

\end{enumerate}
\end{prop}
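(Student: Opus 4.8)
The plan is to establish the three estimates in the order stated, since the first feeds into the second and the second feeds into the third; the geometric reductions of the preceding subsection do most of the structural work, and the real analytic content is concentrated in part (2).

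For part (1) the starting point is the transport identity \eqref{vor1}, namely $(\partial_t+\vepsilon\cdot\nabla+\divergence\vepsilon)(\Omegaepsilon/r)=0$. Writing $g:=\Omegaepsilon/r$ and testing against $|g|^{p-2}g$, the convection term contributes $\int\vepsilon\cdot\nabla g\,|g|^{p-2}g\,dx=-\frac1p\int(\divergence\vepsilon)|g|^p\,dx$ after integration by parts, so that $\frac{d}{dt}\|g\|_{L^p}\le(1-\frac1p)\|\divergence\vepsilon\|_{L^\infty}\|g\|_{L^p}$, and Gronwall gives the $L^p$ bound. Equivalently, along the flow $\psi$ of $\vepsilon$ one has $g(t,\psi(t,x))=g_0(x)\exp(-\int_0^t\divergence\vepsilon\circ\psi)$, and the change of variables with Jacobian $\exp(\int_0^t\divergence\vepsilon\circ\psi)$ reproduces the same constant. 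For the Lorentz estimate I would invoke the real-interpolation definition $L^{p,q}=(L^{p_0},L^{p_1})_{\theta,q}$: the solution map $g_0\mapsto g(t)$ is linear and bounded on each $L^{p_i}$ with norm $e^{(1-1/p_i)\|\divergence\vepsilon\|_{L^1_tL^\infty}}$; choosing $1<p_0<p<p_1<\infty$ and interpolating, the exponents combine exactly to $1-1/p$ since $1/p=(1-\theta)/p_0+\theta/p_1$, which yields the claim up to the interpolation constant $C$.

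Part (2) is the crux. I would split $\vepsilon=\mathcal P\vepsilon+\mathcal Q\vepsilon$ and estimate $\frac{v^r_\varepsilon}{r}$ piece by piece, after first peeling off the low frequencies, whose contribution is absorbed into $\|\vepsilon\|_{L^2}$ via Bernstein once the $1/r$ singularity is regularized by frequency truncation. For the incompressible piece $\mathcal P\vepsilon=-\nabla\times\Delta^{-1}\Omegaepsilon$, the axisymmetric Biot--Savart law expresses $(\mathcal P\vepsilon)^r/r$ as a singular integral acting on $\Omegaepsilon^\theta/r$, and the decisive point is that this kernel maps $L^{3,1}$ into $L^\infty$; combined with part (1) at $p=3$ this produces the term $\|\Omegaepsilon^0/r\|_{L^{3,1}}e^{\|\divergence\vepsilon\|_{L^1_tL^\infty}}$. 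For the compressible piece $\mathcal Q\vepsilon=\nabla\Delta^{-1}\divergence\vepsilon$, I would note that $x\mapsto(\mathcal Q\vepsilon)^r/r$ is again axisymmetric with $(\mathcal Q\vepsilon)^r$ vanishing on the axis (Proposition \ref{mim0}), so the quotient is a genuine zero-order object in $\divergence\vepsilon$; summing its high frequencies in $\dot B^0_{\infty,1}$ bounds it by $\|\divergence\vepsilon\|_{B^0_{\infty,1}}$.

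Part (3) then follows from the vorticity equation \eqref{vor0}. Integrating along the characteristics $\psi$ gives $\frac{d}{dt}\Omegaepsilon(t,\psi(t,x))=\big(\frac{v^r_\varepsilon}{r}-\divergence\vepsilon\big)\Omegaepsilon(t,\psi(t,x))$, whence the maximum principle yields $\|\Omegaepsilon(t)\|_{L^\infty}\le\|\Omegaepsilon^0\|_{L^\infty}\exp\big(\int_0^t(\|v^r_\varepsilon/r\|_{L^\infty}+\|\divergence\vepsilon\|_{L^\infty})\,d\tau\big)$. Inserting the bound from part (2), using $B^0_{\infty,1}\hookrightarrow L^\infty$ to absorb $\|\divergence\vepsilon\|_{L^1_tL^\infty}$ into $\|\divergence\vepsilon\|_{L^1_tB^0_{\infty,1}}$, and bounding the time integral of the monotone exponential by $t\,\|\Omegaepsilon^0/r\|_{L^{3,1}}e^{\|\divergence\vepsilon\|_{L^1_tL^\infty}}$, produces exactly the stated triple exponential. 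I expect the main difficulty to lie in part (2), precisely in the sharp mapping property of the axisymmetric Biot--Savart kernel into $L^\infty$ through the endpoint Lorentz space $L^{3,1}$: here $L^3$ alone would not suffice, and one must exploit the fine structure of the kernel near the axis together with the cancellation built into the geometry. The frequency splitting of the compressible part and the propagation in part (3) are, by comparison, routine.
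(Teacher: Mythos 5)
Your proposal is correct and follows the same architecture as the paper's proof: part (1) via the transport structure of $\Omegaepsilon/r$ (the paper integrates along the flow and uses the Jacobian identity $\textnormal{det}\,\nabla\psi_\varepsilon(t,x)=e^{\int_0^t(\diver \vepsilon)(\tau,\psi(\tau,x))d\tau}$, which is exactly the "equivalent" flow formulation you mention; your energy computation against $|g|^{p-2}g$ gives the same constant) together with real interpolation for the Lorentz bound; part (2) via the Hodge splitting $\vepsilon=\mathcal{P}\vepsilon+\mathcal{Q}\vepsilon$, the Shirota--Yanagisawa pointwise bound $|(\mathcal{P}\vepsilon)^r/r|\lesssim |x|^{-2}\star|\Omegaepsilon^\theta/r|$ combined with $|\cdot|^{-2}\in L^{3,\infty}$ and part (1) in $L^{3,1}$; and part (3) via the maximum principle for \eqref{vor0} plus Gronwall, with the same absorptions you describe. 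The one place where you genuinely deviate is the compressible contribution in part (2): the paper proves an exact algebraic identity (Lemma \ref{prop1}) stating that, on axisymmetric functions, $(\partial_r/r)\Delta^{-1}$ is a combination of the Riesz transforms $\mathcal{R}_{ij}$ with bounded coefficients $x_ix_j/r^2$, and then uses continuity of Riesz transforms on $\dot{B}_{\infty,1}^0$ plus Bernstein for the low frequencies. You instead invoke the vanishing of $(\mathcal{Q}\vepsilon)^r$ on the axis; as written ("vanishing on the axis, so the quotient is a genuine zero-order object") this is not yet a proof, but it becomes one after a mean-value step: since $(\mathcal{Q}\vepsilon)^r(0,z)=0$ and $\partial_r(x_i/r)=0$, one gets $|(\mathcal{Q}\vepsilon)^r/r|\le\|\partial_r(\mathcal{Q}\vepsilon)^r\|_{L^\infty}\le\|\nabla\mathcal{Q}\vepsilon\|_{L^\infty}$ with $\nabla\mathcal{Q}\vepsilon=\nabla^2\Delta^{-1}\divergence\vepsilon$, after which the same $\dot{B}_{\infty,1}^0$/Bernstein argument closes the estimate (passing through $\dot{B}_{\infty,1}^0$, as you do, is essential, since Riesz transforms are not bounded on $L^\infty$). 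Your variant is more elementary than Lemma \ref{prop1}, yielding the needed inequality rather than an exact identity; everything else matches the paper's proof step by step.
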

\begin{proof}
$\bf{(1)}$ To prove this result we will use the characteristic method. Let $\psi_\varepsilon$ denote the flow associated to the velocity $v_\varepsilon$ and  defined by the integral equation
$$
\psi_\varepsilon(t,x)=x+\int_0^t\vepsilon(\tau,\psi_\varepsilon(\tau,x))d\tau.
$$
Set $f_\varepsilon(t,x)=\frac{\Omega_\varepsilon}{r}(t,\psi(t,x))$, then it is easy to see from the equation \eqref{vor1} that $$
\partial_tf_\varepsilon(t,x)+(\diver \vepsilon)(t,\psi(t,x))f_\varepsilon(t,x)=0.
$$ 
Thus we get easily
\begin{equation}\label{maha2}
f_\varepsilon(t,x)=f_\varepsilon(0,x)\,e^{-\int_0^t(\diver \vepsilon)(\tau,\psi(\tau,x))d\tau}.
\end{equation}
For $p=+\infty$ we get
$$
\Big\|\frac{\Omega_\varepsilon}{r}(t)\Big\|_{L^\infty}=\Big\|\frac{\Omega_\varepsilon^0}{r}\Big\|_{L^{\infty}}.
$$
Now for $p\in[1,+\infty[$ we will use the identity
$$
\textnormal{det} \nabla\psi_\varepsilon(t,x)=e^{\int_0^t(\diver \vepsilon)(\tau,\psi(\tau,x))d\tau}
$$
and thus we get by \eqref{maha2}
\begin{eqnarray*}
\Big\|\frac{\Omega_\varepsilon}{r}(t)\Big\|_{L^p}^p&=&\|f_\varepsilon(t)\circ\psi_\varepsilon^{-1}(t,\cdot)\|_{L^p}^p\\
&=&\int_{\RR^3}|f_\varepsilon(t,x)|^p\textnormal{det} \nabla\psi_\varepsilon(t,x)dx\\
&=&\int_{\RR^3}|f_\varepsilon(0,x)|^pe^{(1-p)\int_0^t(\diver \vepsilon)(\tau,\psi(\tau,x))}d\tau\\
&\le&\Big\|\frac{\Omega_\varepsilon^0}{r}\Big\|_{L^{p}}^pe^{(p-1)\|\diver \vepsilon\|_{L^1_tL^\infty}}.
\end{eqnarray*}
The estimates in Lorentz spaces are a direct consequence of real interpolation argument.

${\bf(2)}$ The estimate of the quantity $\|\frac{v^r_\epsilon}{r}\|_{L^1_tL^\infty}$ will require the use of some special properties of axisymmetric flows. First, we split the velocity into compressible and incompressible parts:
 $$\vepsilon=\mathcal{P}\vepsilon+\nabla\Delta^{-1}\divergence\vepsilon.
 $$ 
 We point out that in this decomposition both vector fields are also axisymmetric. Indeed, from Remark \ref{remaa}, the scalar function $\divergence\vepsilon$ is an axisymmetric scalar function and   $\Delta^{-1}\divergence\vepsilon$ too. Again from Remark \ref{remaa} the vector field  $\nabla\Delta^{-1}\divergence\vepsilon$ is axisymmetric. Now obvious computations yield to
\begin{eqnarray}\label{hama1}
\nonumber{v^r_\varepsilon}&=&{(\mathcal{P}\vepsilon)^r}+(\nabla\Delta^{-1}\divergence\vepsilon)\cdot e_r\\
&=&{(\mathcal{P}\vepsilon)^r}+{\partial_r}{\Delta^{-1}\divergence\vepsilon}.
\end{eqnarray}
Therefore we deduce
$$
\frac{v^r_\varepsilon}{r}=\frac{(\mathcal{P}\vepsilon)^r}{r}+\frac{\partial_r}{r}{\Delta^{-1}\divergence\vepsilon}\cdot
$$
To estimate the second term of the preceding identity we make  use  of the algebraic identity described in Lemma \ref{prop1} and dealing with the action of the  operator $\frac{\partial_r}{r}\Delta^{-1}u$ over axisymmetric functions. 
\begin{eqnarray*}
\Big\|\frac{\partial_r}{r}\Delta^{-1}\divergence\vepsilon
  \Big\|_{L^\infty}&\le&\sum_{i,j=1}^2\|\mathcal{R}_{ij}\divergence\vepsilon\|_{L^\infty}\\
  &\le&\sum_{i,j=1}^2\|\mathcal{R}_{ij}\divergence\vepsilon\|_{\dot{B}_{\infty,1}^0}\\
 &\le& C \|\divergence\vepsilon
  \Big\|_{\dot{B}_{\infty,1}^0}\\
  &\le&C\|\vepsilon\|_{L^2}+\|\divergence\vepsilon
  \Big\|_{{B}_{\infty,1}^0}.
\end{eqnarray*}
We have used the emebedding $\dot{B}_{\infty,1}^0\hookrightarrow L^\infty$ combined with the continuity of Riesz transforms on the homogeneous Besov spaces. Furthermore, In the last line we use Bernstein inequalities in order to bound low frequencies:
$$
\sum_{j\in\ZZ_{-}}\|\dot\Delta_j\divergence\vepsilon\|_{L^\infty}\le C\|\vepsilon\|_{L^2}\sum_{j\in\ZZ_{-}}2^{\frac52\,j}\le  C\|\vepsilon\|_{L^2}.
$$
Now let us come back to \eqref{hama1} and look at the  incompressible term.  Since $\mathcal{P}\vepsilon$ is axisymmetric and satisfying furthermore $\divergence \mathcal{P}\vepsilon=0$ and $\hbox{curl }\mathcal{P}\vepsilon=\Omegaepsilon$ then we can use the following inequality proven by Shirota and Yanagisawa \cite{Taira}, 
$$
\Big|\frac{(\mathcal{P}\vepsilon)^r}{r}(x)\Big|\le C\int_{\RR^3}\frac{|\frac{\Omegaepsilon^\theta}{r}(y)|}{|x-y|^2}dy.
$$
Now since $\frac{1}{|\cdot|^{2}}$ belongs to Lorentz space $L^{3,\infty}$ then the usual convolution laws yield
$$
\Big\|\frac{(\mathcal{P}\vepsilon)^r}{r}(t)\Big\|_{L^\infty}\lesssim\Big\|\frac{\Omegaepsilon}{r}(t)\Big\|_{L^{3,1}}.
$$
According to the first part of Proposition \ref{vort12} we have
$$
\Big\|\frac{\Omegaepsilon}{r}(t)\Big\|_{L^{3,1}}\le C\Big\|\frac{\Omegaepsilon^0}{r}\Big\|_{L^{3,1}}e^{\|\divergence\vepsilon\|_{L^1_tL^\infty}}.
$$
Combining these estimates we get
\begin{equation}\label{lao7}
\|\frac{v^r_\varepsilon}{r}\|_{L^\infty}\lesssim \|\vepsilon\|_{L^2}+\|\divergence\vepsilon\|_{B_{\infty,1}^0}+
\Big\|\frac{\Omegaepsilon^0}{r}\Big\|_{L^{3,1}}e^{\|\divergence\vepsilon\|_{L^1_tL^\infty}}.
\end{equation}
$\bf{(3)}$  Applying maximum principle to the vorticity equation \eqref{vor0} and using  Gronwall's inequality we get
\begin{equation}\label{vorrt}
\|\Omegaepsilon(t)\|_{L^\infty}\le\|\Omegaepsilon(0)\|_{L^\infty}e^{\|\divergence\vepsilon\|_{L^1_tL^\infty}+\|\frac{v^r_\epsilon}{r}\|_{L^1_tL^\infty}}.
\end{equation}
Plugging the estimate \eqref{lao7}    into \eqref{vorrt} gives
$$
\|\Omegaepsilon(t)\|_{L^\infty}\le\|\Omegaepsilon^0\|_{L^\infty}e^{C\|\vepsilon\|_{L^1_tL^2}}e^{C\Big\{\|\divergence\vepsilon\|_{L^1_tB_{\infty,1}^0}+t\,\|{\Omegaepsilon^0}/{r}\|_{L^{3,1}}\,e^{\|\divergence\vepsilon\|_{L^1_tL^\infty}}\Big\}}.
$$

\end{proof}

\section{Sub-critical regularities}
The main goal of this section is to prove  Theorem \ref{thm1}, however   we shall limit our analysis    to the estimate of the  lifespan of the solutions  and to the rigorous justification of the low Mach number limit. For example we will not deal with the construction of maximal solution which is cassical and was done for instance in \cite{MR84d:35089}.
 \subsection{Lower bound of $T_\varepsilon$}
 
We will show how the combination  of Strichartz estimates with the special structure of axisymmetric flows  allow to improve  the estimate of the time lifespan $T_\varepsilon$ in the case of ill-prepared initial data. We shall prove the following result.
\begin{prop}\label{lower}
Let $s>\frac52$ and  assume that 
$$
\sup_{\varepsilon\le1}\|(\vepsiloninit,\cepsiloninit)\|_{H^s}<+\infty.
$$
Then the system \eqref{eqs:3} admits a unique solution $(\vepsilon,\cepsilon)\in C([0,T_\varepsilon]; H^s)$ such that
$$
T_\varepsilon\ge C_0 \log\log\log(\varepsilon^{-1}).
$$
Moreover, there exists $\sigma>0$ such that for every $T\in[0,T_\varepsilon]$
$$
\|\divergence\vepsilon\|_{L^1_TB_{\infty,1}^0}+\|\nabla\cepsilon\|_{L^1_TB_{\infty,1}^0}\le C_0\varepsilon^\sigma
$$
and
$$
\|\Omegaepsilon(T)\|_{L^\infty}\le C_0e^{C_0 T},\quad \|\nabla\vepsilon\|_{L^1_TL^\infty}\le C_0 {e^{\exp\{C_0 T\}}},
$$
$$\|(\vepsilon,\cepsilon)(T)\|_{H^s}\le C_0 e^{e^{\exp\{C_0 T\}}}.
$$
\end{prop}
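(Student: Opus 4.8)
The plan is to run a continuity (bootstrap) argument on the single scalar quantity $V_\varepsilon(T)=\|\nabla\vepsilon\|_{L^1_TL^\infty}+\|\nabla\cepsilon\|_{L^1_TL^\infty}$, which by Proposition \ref{energy1} controls the whole $H^s$ norm and hence, through the Beale--Kato--Majda continuation criterion, the lifespan. The driving mechanism is that the acoustic/compressible part is quantitatively small: as long as $V_\varepsilon$ is under control, Proposition \ref{stt1} gives $\|\divergence\vepsilon\|_{L^1_TB_{\infty,1}^0}+\|\nabla\cepsilon\|_{L^1_TB_{\infty,1}^0}\le C_0\varepsilon^{\sigma_0}(1+T^2)e^{CV_\varepsilon(T)}$ with $\sigma_0=\tfrac{2s-5}{r(2s-3)}>0$. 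I would let $T^\ast$ be the largest time on $[0,T_\varepsilon)$ for which $\|\divergence\vepsilon\|_{L^1_TB_{\infty,1}^0}+\|\nabla\cepsilon\|_{L^1_TB_{\infty,1}^0}\le 1$; this quantity is continuous and vanishes at $T=0$, so $T^\ast>0$, and the goal is to show $T^\ast>\tilde T_\varepsilon:=C_0\log\log\log(\varepsilon^{-1})$.

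On $[0,T^\ast)$ I would chain the a priori estimates to build the tower of exponentials. First, the smallness of $\|\divergence\vepsilon\|_{L^1_TL^\infty}$ together with Proposition \ref{energy1}(1) gives $\|\vepsilon\|_{L^\infty_TL^2}\le C_0$ and $\|\vepsilon\|_{L^1_TL^2}\le C_0T$. Feeding these, together with the bootstrap bound, into Proposition \ref{vort12}(3) yields the single-exponential bound $\|\Omegaepsilon(T)\|_{L^\infty}\le C_0e^{C_0T}$ (the second \emph{moreover} estimate). Next I would split $\vepsilon=\mathcal P\vepsilon+\mathcal Q\vepsilon$ and bound $\|\nabla\vepsilon\|_{L^\infty}$ by $\|\nabla\mathcal Q\vepsilon\|_{L^\infty}\lesssim\|\divergence\vepsilon\|_{B_{\infty,1}^0}$ (the Riesz-type operators $\nabla^2\Delta^{-1}$ being bounded on $B_{\infty,1}^0\hookrightarrow L^\infty$) plus the logarithmic Beale--Kato--Majda inequality $\|\nabla\mathcal P\vepsilon\|_{L^\infty}\lesssim\|\vepsilon\|_{L^2}+\|\Omegaepsilon\|_{L^\infty}\log\big(e+\|\vepsilon\|_{H^s}\big)$, valid precisely because $s>\tfrac52$. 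Since $\log(e+\|\vepsilon\|_{H^s})\lesssim 1+V_\varepsilon$ by Proposition \ref{energy1}(2), integrating in time produces the Gronwall inequality $V_\varepsilon(T)\le C_0(1+T)+C_0\int_0^Te^{C_0t}\big(1+V_\varepsilon(t)\big)\,dt$, whose resolution gives the double-exponential bound $\|\nabla\vepsilon\|_{L^1_TL^\infty}\le C_0e^{\exp\{C_0T\}}$ (third estimate) and then, again via Proposition \ref{energy1}(2), the triple-exponential bound $\|(\vepsilon,\cepsilon)(T)\|_{H^s}\le C_0e^{e^{\exp\{C_0T\}}}$ (fourth estimate).

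It remains to close the loop and read off the lifespan. Plugging $V_\varepsilon(T)\le C_0e^{\exp\{C_0T\}}$ back into Proposition \ref{stt1}, I would calibrate the slope of $\tilde T_\varepsilon$ so small that for $T\le\tilde T_\varepsilon$ one has $e^{\exp\{C_0T\}}\le\tfrac{\sigma_0}{2CC_0}\log(\varepsilon^{-1})$, equivalently $C_0T\le\log\log\log(\varepsilon^{-1})$ up to lower-order terms; this inversion of the three nested exponentials is exactly the origin of the three logarithms. For such $T$ we get $e^{CV_\varepsilon(T)}\le\varepsilon^{-\sigma_0/2}$, hence $\|\divergence\vepsilon\|_{L^1_TB_{\infty,1}^0}+\|\nabla\cepsilon\|_{L^1_TB_{\infty,1}^0}\le C_0(1+T^2)\varepsilon^{\sigma_0/2}\le\varepsilon^{\sigma}$ with $\sigma=\sigma_0/4$, for $\varepsilon$ small (the factor $(1+T^2)\approx(\log\log\log(\varepsilon^{-1}))^2$ being absorbed). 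This strictly improves the bootstrap threshold $1$, so by continuity $T^\ast\le\tilde T_\varepsilon$ is impossible; since the finiteness of $V_\varepsilon$ on $[0,T^\ast)$ rules out blow-up there, we conclude $T_\varepsilon>T^\ast>\tilde T_\varepsilon$, which yields the lifespan bound and the announced first \emph{moreover} estimate.

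The main obstacle I expect is not any single estimate but the \emph{circular feedback}: the smallness of the acoustic part requires a bound on $V_\varepsilon$, while the bound on $V_\varepsilon$ (through the vorticity) requires the acoustic smallness. The continuity argument is what breaks this circularity, and the delicate point is the exact bookkeeping of the tower $e^{C_0T}\to e^{\exp\{C_0T\}}\to e^{e^{\exp\{C_0T\}}}$ together with the calibration of the slope of $\tilde T_\varepsilon$, chosen small enough that the triple exponential in $V_\varepsilon$ is beaten by the Strichartz gain $\varepsilon^{\sigma_0}$ exactly on the triple-logarithmic time scale. The structural ingredient that keeps the growth of $V_\varepsilon$ merely double-exponential, rather than permitting finite-time blow-up, is the logarithmic Beale--Kato--Majda inequality, which is available only in the subcritical range $s>\tfrac52$.
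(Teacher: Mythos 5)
Your argument is correct and is built from exactly the same ingredients as the paper's proof --- Proposition \ref{stt1} for the acoustic decay, Proposition \ref{vort12} for the vorticity, Lemma \ref{lem22} and Proposition \ref{energy1} for the Gronwall step, and a continuity argument calibrated on the triple-logarithmic scale --- but it organizes the continuity method differently, in a way that genuinely simplifies the bookkeeping. The paper bootstraps on $V_\varepsilon$ itself against the $\varepsilon$-dependent threshold $CV_\varepsilon(t)\le\log\big(1/\alpha_\varepsilon(T_\varepsilon)\big)$, with $\alpha_\varepsilon(T)=C_0\varepsilon^{\frac{2s-5}{r(2s-3)}}(1+T^2)$; consequently every intermediate estimate carries the self-referential factor $\alpha_\varepsilon(T)e^{CV_\varepsilon(T)}$, culminating in the nested inequality \eqref{tout01}, which is only then resolved by the open-closed argument. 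You instead bootstrap on the fixed, $\varepsilon$-independent threshold $\|(\diver\vepsilon,\nabla\cepsilon)\|_{L^1_TB_{\infty,1}^0}\le1$: under this hypothesis the vorticity bound is single-exponential, the Gronwall inequality for $V_\varepsilon$ is linear (no $e^{CV_\varepsilon}$ on the right-hand side), and Proposition \ref{stt1} is invoked once, at the very end, to beat the threshold by $\varepsilon^{\sigma_0/4}$ (in your notation) on the time scale $T\lesssim\log\log\log(\varepsilon^{-1})$. The tower $e^{C_0T}\to e^{\exp\{C_0T\}}\to e^{e^{\exp\{C_0T\}}}$ and the calibration of $\tilde T_\varepsilon$ are the same in both proofs; your variant trades the paper's nested self-referential inequality for a cleaner linear Gronwall, at the negligible cost of fixing the bootstrap constant $1$ in advance.

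Two points must still be filled in. First, applying Proposition \ref{vort12}(3) with a constant depending only on the $H^s$ bound of the data requires the uniform estimate $\|\Omegaepsilon^0/r\|_{L^{3,1}}\le C_0\sup_{\varepsilon}\|\vepsiloninit\|_{H^s}$; this is not automatic, and in the paper it is a genuine step: the axisymmetric vorticity vanishes on the axis (Proposition \ref{mim0}), so Taylor's formula, a dilation/homogeneity argument in the Lorentz space $L^{3,1}$, and the embedding $H^{s-2}\hookrightarrow L^{3,1}$ (a second place where $s>\frac52$ is used) yield the claim. Without it your single-exponential vorticity bound, hence the whole tower, is unjustified. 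Second, your parenthetical claim that $\nabla^2\Delta^{-1}$ is bounded on $B_{\infty,1}^0$ is not literally correct on the inhomogeneous space: the symbol is singular at $\xi=0$, and the block $\Delta_{-1}$ must be handled separately by Bernstein, producing the extra term $\|\vepsilon\|_{L^2}$ exactly as in Lemma \ref{lem22}. Since your Gronwall inequality already carries $\|\vepsilon\|_{L^2}\le C_0$, this changes nothing structurally, but the statement should be corrected. With these two repairs your proof is complete.
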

\begin{proof}
According to Proposition \eqref{vort12} we have
$$
\|\Omegaepsilon(t)\|_{L^\infty}\le\|\Omegaepsilon^0\|_{L^\infty}e^{C\|\vepsilon\|_{L^1_tL^2}}e^{C\Big\{\|\divergence\vepsilon\|_{L^1_tB_{\infty,1}^0}+t\,\|{\Omegaepsilon^0}/{r}\|_{L^{3,1}}\,e^{\|\divergence\vepsilon\|_{L^1_tL^\infty}}\Big\}}.
$$
To bound $\|{\Omegaepsilon^0}/{r}\|_{L^{3,1}}$ we  use the fact that  $\Omegaepsilon^0(0,0,z)=0, $ see Propostion \ref{mim0},  combined with Taylor formula  
$$
\Omegaepsilon^0(x_{1},x_{2},z)=\int_{0}^1\Big(x_{1}\partial_{x_{1}}\Omegaepsilon^0(\tau x_{1},\tau x_{2},z)+x_{2}\partial_{x_{2}}\Omegaepsilon^0(\tau x_{1},\tau x_{2},z)\Big)d\tau.  
$$
Applying \eqref{imbed0}  with  a homogeneity argument yields
\begin{eqnarray*}
\|\Omegaepsilon^0/r\|_{L^{3,1}}&\lesssim&\int_{0}^1\|\nabla\Omegaepsilon^0(\tau\cdot,\tau\cdot,\cdot)\|_{L^{3,1}}d\tau\\
&\lesssim& \|\nabla\Omegaepsilon^0\|_{L^{3,1}}\int_{0}^1\tau^{-\frac{2}{3}}d\tau\\
&\lesssim &\|\nabla\Omegaepsilon^0\|_{L^{3,1}}.  \end{eqnarray*}
According to the embedding $H^{s-2}\hookrightarrow L^{3,1} ,\,\hbox{for } s>\frac52$ one has
\begin{eqnarray*}
\|{\Omegaepsilon^0}/{r}\|_{L^{3,1}}&\lesssim& \|{\nabla\Omegaepsilon^0}\|_{H^{s-2}}\\
&\lesssim&\|\Omegaepsilon^0\|_{H^{s-1}}\\
&\lesssim&\|\vepsiloninit\|_{H^s}.
\end{eqnarray*}
From Proposition \ref{energy1} we get
\begin{equation}\label{tour1}
\|\Omegaepsilon(t)\|_{L^\infty}\le C_0 e^{C_0 (1+t)\exp{\big\{\|\divergence\vepsilon\|_{L^1_tB_{\infty,1}^0}}\big\}}.
\end{equation}

Using Lemma \ref{lem22} and Proposition \ref{energy1} we get
$$
\|\nabla\vepsilon(t)\|_{L^\infty}\lesssim\|\vepsilon(t)\|_{L^2}+\|\divergence\vepsilon(t)\|_{B_{\infty,1}^0}+C_0\|\Omegaepsilon(t)\|_{L^\infty}\big(1+V_\varepsilon(t)\big).
$$
Integrating in time and using Proposition \ref{energy1} and Proposition \ref{stt1},
\begin{eqnarray*}
\|\nabla\vepsilon\|_{L^1_TL^\infty}&\lesssim& \|\vepsilon\|_{L^1_TL^2}+\|\divergence\vepsilon\|_{L^1_TB_{\infty,1}^0}+C_0\int_0^T\|\Omegaepsilon(t)\|_{L^\infty}\big(1+V_\varepsilon(t)\big)dt\\
&\le&C_0Te^{\|\divergence\vepsilon\|_{L^1_TL^\infty}}+C_0\alpha_\varepsilon(T)  e^{CV_\varepsilon(T)}+C_0\int_0^T\|\Omegaepsilon(t)\|_{L^\infty}\big(1+V_\varepsilon(t)\big)dt,
\end{eqnarray*}
with
$$
\alpha_\varepsilon(T):=C_0\varepsilon^{\frac{2s-5}{r(2s-3)}}(1+T^{2}).
$$
Using again Proposition \ref{stt1} we obtain
$$
V_\varepsilon(T)\le  C_0\alpha_\varepsilon(T)  e^{CV_\varepsilon(T)}+C_0Te^{\|\divergence\vepsilon\|_{L^1_tL^\infty}}+C_0\int_0^T\|\Omegaepsilon(t)\|_{L^\infty}(1+V_\varepsilon(t)dt.
$$
Thus Gronwall's lemma combined with \eqref{tour1} and Proposition \ref{stt1} yield
\begin{eqnarray}\label{tout01}
\nonumber V_\varepsilon(T)&\le& C_0\,e^{C_0\int_0^T\|\Omegaepsilon(t)\|_{L^\infty}dt}\Big(\alpha_\varepsilon(T)  e^{CV_\varepsilon(T)}+Te^{\|\divergence\vepsilon\|_{L^1_tL^\infty}}\Big)\\
\nonumber&\le& e^{e^{C_0 (1+T)\exp{\big\{\|\divergence\vepsilon\|_{L^1_TB_{\infty,1}^0}}\big\}}}
 \Big(\alpha_\varepsilon(T)  e^{CV_\varepsilon(T)}+Te^{\|\divergence\vepsilon\|_{L^1_tL^\infty}}\Big)\\
 \nonumber&\le&  e^{e^{C_0 (1+T)\exp{\big\{\|\divergence\vepsilon\|_{L^1_tB_{\infty,1}^0}}\big\}}}
 \Big(\alpha_\varepsilon(T) \,e^{CV_\varepsilon(T)}+1\Big)\\
 &\le&e^{e^{C_0 (1+T)\exp{\big\{\alpha_\varepsilon(T)e^{CV_\varepsilon(T)}\big\}}}}.
\end{eqnarray}
We choose $T_\varepsilon$ such that
\begin{equation}\label{c1}
\log\big(\frac{1}{\alpha_\varepsilon(T_\varepsilon)}\big)>Ce^{e^{\,C_0\, e\,(1+T_\varepsilon)}}.
\end{equation}
Then we claim that for every $t\in[0,T_\varepsilon]$ 
\begin{equation}\label{tout1}
CV_\varepsilon(t)\le \log\big(\frac{1}{\alpha_\varepsilon(T_\varepsilon)}\big).
\end{equation}
Indeed, we set $I_{T_\varepsilon}:=\big\{t\in[0,T_\varepsilon]; CV_\varepsilon(t)\le \log\big(\frac{1}{\alpha_\varepsilon(T_\varepsilon)}\big)\big\}.$ First this set is nonempty since $0\in I_{T_\varepsilon}$. By the continuity of $t\mapsto V_\varepsilon(t)$, the set  $I_{T_\varepsilon}$ is closed and thus to prove that $I_{T_\varepsilon}$ coincides with $[0,T_\varepsilon]$ it suffices to show that $I_{T_\varepsilon}$ is an open set. Let $t\in I_{T_\varepsilon}$ then using \eqref{tout01} and \eqref{tout1} we get
\begin{eqnarray*}
CV_\varepsilon(t)&\le& Ce^{e^{C_0 (1+t)\exp{\big\{\alpha_\varepsilon(t)e^{CV_\varepsilon(t)}\big\}}}}\\
&\le&Ce^{e^{C_0 (1+t)\exp\{\frac{\alpha_\varepsilon(t)}{\alpha_\varepsilon(T_\varepsilon)}\}}}\\
&\le&Ce^{e^{C_0\,e\, (1+t)}}\\
&<&\log\big(\frac{1}{\alpha_\varepsilon(T_\varepsilon)}\big).
\end{eqnarray*}
This proves that $t$ is in the interior of $I_{T_\varepsilon}$ and thus $I_{T_\varepsilon}$ is an open set of $[0,T_\varepsilon]$. Consequently we conclude that $I_{T_\varepsilon}=[0,T_\varepsilon].$
Now we choose precisely $T_\varepsilon$ such that 
$$
C_0 e (1+T_\varepsilon)=\log\log\log(\varepsilon^{-\theta}),\quad \hbox{with}\quad \theta>0,
$$
then the assumption \eqref{c1} is satisfied whenever
$$
{\varepsilon^{\frac{2s-5}{r(2s-3)}}}(1+T_\varepsilon^{2})<{\varepsilon^{C\theta}}\cdot
$$
This last condition  is satisfied for small values of $\varepsilon$ when  $C\theta<\frac{2s-5}{r(2s-3)}.$ Now inserting \eqref{tout1} into \eqref{tout01} we  get  for $T\in[0,T_\varepsilon]$
\begin{equation}\label{Lipsss}
V_\varepsilon(T)\le e^{e^{C_0 e(1+T)}}. 
\end{equation}
Plugging this into the estimate of Proposition \ref{stt1} we obtain for $T\in[0,T_\varepsilon]$
\begin{eqnarray}\label{disppp1}
\nonumber\|\divergence\vepsilon\|_{L^1_TB_{\infty,1}^0}+\|\nabla\cepsilon\|_{L^1_TB_{\infty,1}^0}&\le& C_0 e^{Ce^{\exp\{C_0 e (1+T)\}}}\,\varepsilon^{\frac{2s-5}{r(2s-3)}}\\
\nonumber&\le&C\,\varepsilon^{\frac{2s-5}{r(2s-3)}-C\theta}\\
&\le& C\varepsilon^\sigma,\quad \hbox{with}\quad \sigma=\frac{2s-5}{r(2s-3)}-C\theta.
\end{eqnarray}
From Corollary \ref{cor11} and by choosing $\theta$ sufficiently small we obtain for every $r\in]2,+\infty[$
\begin{eqnarray*}
\nonumber\|\mathcal{Q}\vepsilon\|_{L^r_TL^\infty}+\|\cepsilon\|_{L^r_TL^\infty}&\le&C_0 \varepsilon^{\frac1r}(1+T)e^{C V_\varepsilon(t)} \\
\nonumber&\leq&C_0  \varepsilon^{\frac1r-C\theta}\log\log\log(\varepsilon^{-\theta})\\
&\le&C_0\varepsilon^{\sigma'},\quad\sigma^\prime>0.
\end{eqnarray*}
We point out that the use of H\"{o}lder inequality and the slow growth of $T_\varepsilon$ allow us to get the following: for every $r\in[1,+\infty[$
\begin{eqnarray}\label{esppp1}
\|\mathcal{Q}\vepsilon\|_{L^r_TL^\infty}+\|\cepsilon\|_{L^r_TL^\infty}
&\le&C_0\varepsilon^{\sigma'},\quad\sigma^\prime>0.
\end{eqnarray}
Inserting \eqref{disppp1} into \eqref{tour1} leads to
$$\|\Omegaepsilon(T)\|_{L^\infty}\le C_0 e^{C_0T}.
$$
To  estimate the solutions of \eqref{eqs:3} in Sobolev norms we use Proposition \ref{energy1} combined with the Lipschitz bound \eqref{Lipsss}
$$
\|(\vepsilon,\cepsilon)(T)\|_{H^s}\le C_0 e^{e^{\exp\{C_0T\}}}.
$$
\end{proof}
\subsection{Incompressible limit}
Our task now is to prove the second part of  Theorem \ref{thm1}. More precisely we will establish the following result.
\begin{prop}
Let $(\vepsiloninit,\cepsiloninit)$ be a $H^s$-bounded family of axisymmetric initial data with $s>\frac52$, that is
$$
\sup_{0<\varepsilon\le1}\|(\vepsiloninit,\cepsiloninit)\|_{H^s}<+\infty.
$$
 Assume that there exists $v_0\in H^s$ such that $$
 \lim_{\varepsilon\to 0}\|\mathcal{P}\vepsiloninit-v_0\|_{L^2}=0.
 $$ 
  Then the family
$(\mathcal{P}\vepsilon)_{\varepsilon}$  tends to the 
   solution $v$ of the incompressible Euler  system $(\ref{eq:45})$ associated to the initial data $v_0$: for every $T>0,$  
   \begin{equation*}
    \lim_{\varepsilon\to 0} \|\mathcal{P} \vepsilon-v\|_{L^\infty_TL^2}=0.
   \end{equation*}
However, the compressible and acoustic parts tend to zero: for every $r\in[1,+\infty[,T>0$
 \begin{equation*}
    \lim_{\varepsilon\to 0} \|(\mathcal{Q}\vepsilon, \cepsilon)\|_{L^r_TL^\infty}=0.
   \end{equation*}

\end{prop}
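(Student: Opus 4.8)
The plan is to dispatch the two assertions separately, the second being essentially a reformulation of estimates already in hand. For the decay of the compressible and acoustic parts I would simply invoke \eqref{esppp1} from Proposition \ref{lower}: since $T_\varepsilon\geq C_0\log\log\log(\varepsilon^{-1})\to+\infty$, any fixed $T>0$ satisfies $T<T_\varepsilon$ for all small enough $\varepsilon$, and then $\|\mathcal{Q}\vepsilon\|_{L^r_TL^\infty}+\|\cepsilon\|_{L^r_TL^\infty}\le C_0\varepsilon^{\sigma'}$ for every $r\in[1,+\infty[$, which yields the limit $0$.

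For the incompressible limit I would apply the Leray projector $\mathcal{P}$ to the momentum equation of \eqref{eqs:3}. Because $\mathcal{P}$ kills gradients, the penalized term $\frac1\varepsilon\nabla\cepsilon$ and $\bar\gamma\cepsilon\nabla\cepsilon=\frac{\bar\gamma}{2}\nabla\cepsilon^2$ both disappear, leaving $\partial_t\mathcal{P}\vepsilon+\mathcal{P}(\vepsilon\cdot\nabla\vepsilon)=0$. Let $v$ be the global Kato solution of \eqref{eq:45} with data $v_0$; note $v_0=\lim\mathcal{P}\vepsiloninit$ is axisymmetric without swirl, so $v$ is indeed global in $H^s$ by \cite{Taira,Ukhovskii}. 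Setting $w_\varepsilon:=\mathcal{P}\vepsilon-v$ and subtracting the two equations, I would insert the splitting $\vepsilon=\mathcal{P}\vepsilon+\mathcal{Q}\vepsilon$ into the quadratic term: the diagonal incompressible part produces $v\cdot\nabla w_\varepsilon+w_\varepsilon\cdot\nabla\mathcal{P}\vepsilon$, while the genuinely compressible contributions are $\mathcal{P}\vepsilon\cdot\nabla\mathcal{Q}\vepsilon$, $\mathcal{Q}\vepsilon\cdot\nabla\mathcal{P}\vepsilon$ and $\mathcal{Q}\vepsilon\cdot\nabla\mathcal{Q}\vepsilon$.

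The heart of the matter is an $L^2$ energy estimate for $w_\varepsilon$. Since $\mathcal{P}$ is self-adjoint and $\mathcal{P}w_\varepsilon=w_\varepsilon$, I may drop $\mathcal{P}$ in the pairing. The transport term $\langle v\cdot\nabla w_\varepsilon,w_\varepsilon\rangle$ vanishes because $\divergence v=0$, and $\langle w_\varepsilon\cdot\nabla\mathcal{P}\vepsilon,w_\varepsilon\rangle$ is bounded by $\|\nabla\mathcal{P}\vepsilon\|_{L^\infty}\|w_\varepsilon\|_{L^2}^2$, whose time integral is finite and uniform in $\varepsilon$ by the Lipschitz bound \eqref{Lipsss} together with \eqref{disppp1}. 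For the source terms I would exploit that $\mathcal{Q}\vepsilon$ is a gradient: this makes $\mathcal{Q}\vepsilon\cdot\nabla\mathcal{Q}\vepsilon=\frac12\nabla|\mathcal{Q}\vepsilon|^2$, whose pairing with the divergence-free $w_\varepsilon$ is zero, and it allows the rewriting $\langle\mathcal{P}\vepsilon\cdot\nabla\mathcal{Q}\vepsilon,w_\varepsilon\rangle=-\langle(w_\varepsilon\cdot\nabla)\mathcal{P}\vepsilon,\mathcal{Q}\vepsilon\rangle$ after an integration by parts using $\divergence w_\varepsilon=\divergence\mathcal{P}\vepsilon=0$. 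Both remaining terms are then controlled by $\|\mathcal{Q}\vepsilon\|_{L^\infty}\|\vepsilon\|_{H^1}\|w_\varepsilon\|_{L^2}$, and Gronwall's lemma gives
\[
\|w_\varepsilon\|_{L^\infty_TL^2}\le C\Big(\|\mathcal{P}\vepsiloninit-v_0\|_{L^2}+\|\vepsilon\|_{L^\infty_TH^1}\,\|\mathcal{Q}\vepsilon\|_{L^1_TL^\infty}\Big)e^{C\|\nabla\mathcal{P}\vepsilon\|_{L^1_TL^\infty}}.
\]
For fixed $T$ the exponential factor and $\|\vepsilon\|_{L^\infty_TH^1}$ are bounded uniformly in $\varepsilon$ by Propositions \ref{energy1} and \ref{lower}, the first summand tends to $0$ by hypothesis, and $\|\mathcal{Q}\vepsilon\|_{L^1_TL^\infty}\le C_0\varepsilon^{\sigma'}\to0$ by \eqref{esppp1}; hence $\|w_\varepsilon\|_{L^\infty_TL^2}\to0$, which is the claim.

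The main obstacle is exactly the term $\mathcal{P}\vepsilon\cdot\nabla\mathcal{Q}\vepsilon$: a direct bound would require $\|\mathcal{Q}\vepsilon\|_{L^2}$ or $\|\nabla\mathcal{Q}\vepsilon\|_{L^2}$ to be small, which is \emph{false}, since the acoustic energy is conserved in $L^2$ and only disperses in $L^\infty$. The resolution is that the curl-free structure of $\mathcal{Q}\vepsilon$, combined with the incompressibility of $w_\varepsilon$, lets one transfer the derivative onto the regular factor $\mathcal{P}\vepsilon$ and close the estimate using solely the Strichartz decay of $\|\mathcal{Q}\vepsilon\|_{L^\infty}$.
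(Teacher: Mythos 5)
Your proof is correct, but for the main assertion it takes a genuinely different route from the paper's. The decay of $(\mathcal{Q}\vepsilon,\cepsilon)$ is handled identically (both are direct consequences of \eqref{esppp1} once $T<T_\varepsilon$ for small $\varepsilon$). For the incompressible limit, however, the paper never writes an equation for $\mathcal{P}\vepsilon-v$: it proves that $(\mathcal{P}\vepsilon)_\varepsilon$ is a Cauchy family in $L^\infty_TL^2$ by estimating $\ww=\mathcal{P}\vepsilon-\mathcal{P}v_{\varepsilon'}$ for two parameters $\varepsilon>\varepsilon'$, extracts a strong limit, and only then identifies that limit as a solution of \eqref{eq:45} by passing to the limit in $\mathcal{P}(\vepsilon\cdot\nabla\vepsilon)$, the compressible contributions $\mathcal{P}\vepsilon\cdot\nabla\mathcal{Q}\vepsilon$ and $\mathcal{Q}\vepsilon\cdot\nabla\vepsilon$ vanishing in $L^1_TL^2$. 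You instead compare directly with the Kato solution $v$; this requires importing the global well-posedness of axisymmetric Euler from \cite{Taira,Ukhovskii} and the observation (which you correctly make) that $v_0$, being an $L^2$-limit of axisymmetric swirl-free fields, is itself axisymmetric without swirl. The paper's scheme is self-contained in that it reconstructs the limiting solution rather than presupposing it; yours is shorter, skips the limit-identification step, and yields an explicit rate $\|\mathcal{P}\vepsilon-v\|_{L^\infty_TL^2}\lesssim_T\|\mathcal{P}\vepsiloninit-v_0\|_{L^2}+\varepsilon^{\sigma'}$. Your handling of the delicate term is also different and valid: using the Hessian symmetry $\partial_i(\mathcal{Q}\vepsilon)^j=\partial_j(\mathcal{Q}\vepsilon)^i$ and $\divergence w_\varepsilon=0$ one indeed gets $\langle\mathcal{P}\vepsilon\cdot\nabla\mathcal{Q}\vepsilon,w_\varepsilon\rangle=-\langle w_\varepsilon\cdot\nabla\mathcal{P}\vepsilon,\mathcal{Q}\vepsilon\rangle$, so only $\|\mathcal{Q}\vepsilon\|_{L^1_TL^\infty}\to 0$ is needed there. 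One correction of your commentary, though: the claim that a direct bound of $\mathcal{P}\vepsilon\cdot\nabla\mathcal{Q}\vepsilon$ is impossible is overstated. The paper bounds it directly by $\|\vepsilon\|_{L^\infty_TL^2}\,\|\nabla\mathcal{Q}\zeta_{\varepsilon,\varepsilon'}\|_{L^1_TL^\infty}$, controlling the gradient through the low/high frequency splitting $\|\nabla\mathcal{Q}\zeta_{\varepsilon,\varepsilon'}\|_{L^1_tL^\infty}\lesssim\|\divergence\zeta_{\varepsilon,\varepsilon'}\|_{L^1_tB_{\infty,1}^0}+\|\mathcal{Q}\zeta_{\varepsilon,\varepsilon'}\|_{L^1_tL^\infty}$, both of which are $O(\varepsilon^\eta)$ by \eqref{disppp1} and \eqref{esppp1}; so the $L^1_TL^\infty$ Strichartz control of the full gradient of the compressible part is in fact available, and your integration by parts, while elegant, is a convenience rather than a necessity.
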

\begin{proof}
The last assertion concerning the compressible and acoustic parts of the fluid is a direct consequence of  \eqref{esppp1}. Now 
we intend to show that for every time $T>0$ the family $(\mathcal{P}\vepsilon)_{\varepsilon}$ is a Cauchy sequence in $L^\infty_TL^2$ for small values of $\varepsilon$ and this will allow us to prove strong convergence, when $\varepsilon$ tends to zero, of the incompressible parts to a vector-valued function $v$ which is a solution of the incompressible Euler equations. For this purpose, let $\varepsilon>\varepsilon^\prime>0$ and set $\zeta_{\varepsilon,\varepsilon'}=v_\varepsilon-v_{\varepsilon^\prime}$ and $w_{\varepsilon,\varepsilon'}=\mathcal{P}{v_{\varepsilon}}-\mathcal{P}{v_{\varepsilon^\prime}}.$ Applying Leray's projector $\mathcal{P}$ to the first equation of \eqref{eqs:3} yields to
$$
\partial_t\mathcal{P}\vepsilon+\mathcal{P}(\vepsilon\cdot\nabla \vepsilon)=0.
$$
It follows that
$$
\partial_t\ww+\mathcal{P}(\vepsilon\cdot\nabla \zeta_{\varepsilon,\varepsilon'})+\mathcal{P}(\zeta_{\varepsilon,\varepsilon'}\cdot\nabla v_{\epsilon^\prime})=0.
$$
Taking the $L^2$ inner product of this equation with $\ww$  and using the fact that Leray's projector is an involutive self-adjoint operator we get
\begin{eqnarray*}
\frac12\frac{d}{dt}\|\ww(t)\|_{L^2}^2&=&-\int_{\RR^3}(\vepsilon\cdot\nabla \zeta_{\varepsilon,\varepsilon'})\mathcal{P}\ww dx-\int_{\RR^3}(\zeta_{\varepsilon,\varepsilon'}\cdot\nabla v_{\epsilon^\prime})\mathcal{P}\ww dx\\
&=&-\int_{\RR^3}(\vepsilon\cdot\nabla(\ww+ \mathcal{Q} \zeta_{\varepsilon,\varepsilon'}))\,\ww dx-\int_{\RR^3}((\ww+\mathcal{Q}\zeta_{\varepsilon,\varepsilon'})\cdot\nabla v_{\epsilon^\prime})\ww dx\\
&=&\frac12\int_{\RR^3}|\ww|^2\,\divergence\,\vepsilon dx-\int_{\RR^3}(\ww\cdot\nabla v_{\epsilon^\prime})\ww dx\\&-&
\int_{\RR^3}\big(\vepsilon\cdot\nabla \mathcal{Q} \zeta_{\varepsilon,\varepsilon'}+\mathcal{Q}\zeta_{\varepsilon,\varepsilon'}\cdot\nabla v_{\epsilon^\prime}\big)\ww dx\\
&\le&\big(\|\divergence \vepsilon(t)\|_{L^\infty}+\|\nabla v_{\varepsilon^\prime}(t)\|_{L^\infty}\big)\|\ww(t)\|_{L^2}^2\\&+&\Big(\|\vepsilon(t)\|_{L^2}\|\nabla \mathcal{Q} \zeta_{\varepsilon,\varepsilon'}(t)\|_{L^\infty}+\|\mathcal{Q}\zeta_{\varepsilon,\varepsilon'}(t)\|_{L^\infty}\|\nabla v_{\varepsilon^\prime}(t)\|_{L^2}  \Big)\|\ww(t)\|_{L^2}.
\end{eqnarray*}
Integrating in time and using Gronwall's lemma 
$$
\|\ww(t)\|_{L^2}\leq\big(\|\ww^0\|_{L^2} +F_{\varepsilon,\varepsilon^\prime}(t)\big)e^{\|\divergence \vepsilon\|_{L^1_tL^\infty}+\|\nabla v_{\varepsilon^\prime}\|_{L^1_tL^\infty}}
$$
with
$$F_{\varepsilon,\varepsilon^\prime}(t)=\|\vepsilon\|_{L^\infty_tL^2}\|\nabla \mathcal{Q} \zeta_{\varepsilon,\varepsilon'}\|_{L^1_tL^\infty}+\|\mathcal{Q}\zeta_{\varepsilon,\varepsilon'}\|_{L^1_tL^\infty}\|\nabla v_{\varepsilon^\prime}\|_{L^\infty_tL^2}   \Big).
$$
Splitting $ \mathcal{Q} \zeta_{\varepsilon,\varepsilon'}$ into low and high frequencies we get
$$
\|\nabla \mathcal{Q} \zeta_{\varepsilon,\varepsilon'}\|_{L^1_tL^\infty}\lesssim \|\textnormal{div }\zeta_{\varepsilon,\varepsilon'}\|_{L^1_tB_{\infty,1}^0}+\| \mathcal{Q} \zeta_{\varepsilon,\varepsilon'}\|_{L^1_tL^\infty}.
$$
Therefore combining  Proposition  \ref{energy1} and Proposition  \ref{lower} with \eqref{esp1} we get for some $\eta>0$
\begin{eqnarray*}
\sup_{t\in[0,T_\varepsilon]}F_{\varepsilon,\varepsilon^\prime}(t)&\le& C_0\varepsilon^\eta.
\end{eqnarray*}
According to Proposition \ref{lower} we obtain
$$
\|\ww(t)\|_{L^2}\le C_0e^{e^{\exp\{C_0 t\}}}\big(\|\ww^0\|_{L^2}+\varepsilon^\eta\big).
$$
Thus we deduce that $(\mathcal{P}\vepsilon)_{\varepsilon}$ is a Caucy sequence in the Banach space $L^\infty_TL^2$ and therefore it converges strongly to some $v$ which belongs to $L^\infty_{\textnormal{loc}}(\RR_+;H^s)$. This last claim is a consequence of the uniform bound in $H^s$. It remains now to prove that $v$ is a solution for the  incompressible Euler system with initial data $v^0:=\lim_{\varepsilon\to 0}\mathcal{P}\vepsiloninit.$ The passage to the limit in the linear part $(\partial_t \vepsilon)_{\varepsilon}$ is easy to do by using the convergence  in the  weak sense. For the nonlinear term we split the velocity into its compressible and incompressible parts:	
$$
\vepsilon\cdot\nabla\vepsilon=\mathcal{P}\vepsilon\cdot\nabla\mathcal{P}\vepsilon+\mathcal{P}\vepsilon\cdot\nabla\mathcal{Q}\vepsilon+\mathcal{Q}\vepsilon\cdot\nabla\vepsilon.
$$
By virtue of  Proposition \ref{energy1}, Proposition \ref{lower} and \eqref{esppp1}  we get
\begin{eqnarray*}
\|\mathcal{P}\vepsilon\cdot\nabla\mathcal{Q}\vepsilon\|_{L^1_TL^2}&\le& \|\vepsilon\|_{L^\infty_TL^2}\|\nabla\mathcal{Q}\vepsilon\|_{L^1_TL^\infty}\\
&\le& C_0\big(\|\mathcal{Q}\vepsilon\|_{L^1_TL^\infty}+\|\divergence\vepsilon\|_{L^1_TB_{\infty,1}^0}  \big)\\
&\le& C_0\varepsilon^\eta. 
\end{eqnarray*}
For the last term we  combine \eqref{esppp1} with Proposition \ref{lower},
\begin{eqnarray*}
\|\mathcal{Q}\vepsilon\cdot\nabla\vepsilon\|_{L^1_TL^2}&\le& \|\vepsilon\|_{L^\infty_TH^1}\|\mathcal{Q}\vepsilon\|_{L^1_TL^\infty}\\
&\le& C_0 \varepsilon^\eta. 
\end{eqnarray*}
It follows that
$$
\lim_{\varepsilon\to 0}\|\vepsilon\cdot\nabla\vepsilon-\mathcal{P}\vepsilon\cdot\nabla\mathcal{P}\vepsilon\|_{L^1_TL^2}=0.
$$
The strong convergence of $\mathcal{P}\vepsilon$ to $v$ in $L^\infty_TL^2$ allows us to claim that 
$$
\lim_{\varepsilon\to 0}\|\mathcal{P}\vepsilon\otimes\mathcal{P}\vepsilon- v\otimes\, v\|_{L^1_TL^1}=0.
$$
Consequently, the family
$\mathcal{P}(\vepsilon\cdot\nabla\vepsilon)$ converges strongly in some Banach space to $\mathcal{P}(v\cdot\nabla v)$  and  thus we deduce that $v$ satisfies the incompressible Euler system.
\end{proof}

\section{Critical regularities} \label{critik}
In this section we will study the system \eqref{eqs:3} with initial data lying in the  critical Besov \mbox{spaces $B_{2,1}^{\frac52}$.} There are some additional difficulties compared to the subcritical case that we can summarize in two points. The first one concerns the Beal-Kato-Majda criterion which is out of use and the second one is related to the extension of the interpolation argument used in Proposition \ref{stt1} to the critical regularity. We will start with a strong version of Theorem \ref{thm11} in which we require the initial data to be in a more regular   
space of type $B_{2,1}^{\frac52,\Psi}$.  We are able to prove the desired result for any slowly growth of $\Psi$. Roughly speaking, to get the incompressible limit and quantify the lower bound of the lifespan,  the function $\Psi$ must  only increase to infinity. The study of the case $\Psi\equiv1$ which is the subject of the  Theorem \ref{thm11} will be deduced from the Corollary \ref{corza1} and the Theorem \ref{thm111}.
\begin{thm}\label{thm111}
Let $\Psi \in \mathcal{U}_\infty$ and $\{(\vepsiloninit,\cepsiloninit)_{0<\varepsilon\le1}\}$ be a $B_{2,1}^{\frac52,\Psi}$-bounded family of axisymmetric initial data, that is
$$
\sup_{0<\varepsilon\le1}\|(\vepsiloninit,\cepsiloninit)\|_{B_{2,1}^{\frac52,\Psi}}<+\infty.
$$
Then the system \eqref{eqs:3} has a unique solution $(\vepsilon,\cepsilon)\in C([0,T_\varepsilon[; B_{2,1}^{\frac52,\Psi}),$ with
$$
T_\varepsilon\geq C_0\log\log\log\Phi(\varepsilon):=\tilde T_\varepsilon, \quad \hbox{ if}\quad 0<\varepsilon<<1,
$$
where 
$$
\Phi(x):= x^{\frac{1}{2r}}+\frac{1}{\Psi\big(\log\big(\frac{1}{x^{\frac{1}{2r}}}\big)\big)},\quad  x\in]0,1]
$$ 
and $r$ is a free parameter belonging to $]2,+\infty[$. Moreover we have for small values of $\varepsilon$,
     \begin{equation}\label{disp23}
   \|(\diver\vepsilon,\nabla \cepsilon)\|_{L^1_{\tilde T_\varepsilon} L^\infty}\le C_0 \Phi^{\frac13}(\varepsilon).
        \end{equation}
Assume in addition that
the incompressible parts $(\mathcal{P}\vepsiloninit)$ converge in $L^2$ to some $v_0.$ Then 
   the incompressible parts of the solutions tend to the Kato
   solution of the system $(\ref{eq:45})$:
   \begin{equation*}
     \mathcal{P} \vepsilon \rightarrow v
     \quad \text{ in } L^\infty_{loc} (\RR ^{ +} ; L ^2 ).
   \end{equation*}

\end{thm}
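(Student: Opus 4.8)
The plan is to reproduce the architecture of the subcritical proof of Proposition \ref{lower}, but with two of its ingredients replaced by their critical analogues. The first ingredient to replace is the interpolation estimate of Proposition \ref{stt1}: since at the critical regularity $\|\divergence\vepsilon\|_{L^\infty}$ and $\|\vepsilon\|_{B_{2,1}^{\frac52}}$ have the same scaling, the naive interpolation collapses, and the extra decay must instead be extracted from the weight $\Psi$. The second is the Beale-Kato-Majda step used to bound the Lipschitz norm in the proof of Proposition \ref{lower}: in place of a bound on $\|\Omegaepsilon\|_{L^\infty}$, I would prove and use the logarithmic estimate for $\|\Omegaepsilon(t)\|_{B_{\infty,1}^0}$ announced in the introduction, together with the control of $\|\vepsilon^r/r\|_{L^\infty}$ from Proposition \ref{vort12}(2). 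Once these two replacements are in hand, the lifespan is obtained by a continuity (bootstrap) argument identical in spirit to the one closing Proposition \ref{lower}.

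For the first replacement, I would split the acoustic quantities into low and high frequencies at a threshold $N$. On the low band $q\le N$ I would use the Strichartz bound of Corollary \ref{cor11}: since $\|\Delta_q\divergence\vepsilon\|_{L^\infty}\sim 2^q\|\Delta_q\mathcal{Q}\vepsilon\|_{L^\infty}$, summing over $q\le N$ loses only a factor $2^N$, so that after the H\"older embedding $L^r_T\hookrightarrow L^1_T$ this band is bounded by $C_0\,2^N\,\varepsilon^{1/r}(1+T)^2 e^{CV_\varepsilon(T)}$. On the high band $q>N$ I would use Bernstein to pass from $L^2$ to $L^\infty$ and then the monotonicity of $\Psi$, giving a factor $\Psi(N)^{-1}$ times $\|\vepsilon\|_{B_{2,1}^{\frac52,\Psi}}$, which is controlled through the heterogeneous energy estimate of Proposition \ref{energy1}. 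Choosing $2^N=\varepsilon^{-1/(2r)}$ balances the two contributions and produces exactly
$$
\|\divergence\vepsilon\|_{L^1_TB_{\infty,1}^0}+\|\nabla\cepsilon\|_{L^1_TB_{\infty,1}^0}\le C_0(1+T)^2 e^{CV_\varepsilon(T)}\,\Phi(\varepsilon),
$$
the quantity $\Phi(\varepsilon)=\varepsilon^{1/(2r)}+\Psi\big(\log(\varepsilon^{-1/(2r)})\big)^{-1}$ being precisely the optimized error.

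Next I would feed this into the vorticity analysis. Using the $B_{\infty,1}^0$ logarithmic estimate for $\Omegaepsilon$ (whose derivation, via the splitting of the vorticity and the dynamical interpolation technique of \cite{AHK} adapted to the compressible model \eqref{vor0}, is the technical heart of the section), together with a critical Lipschitz bound of the form $\|\vepsilon\|_{\textnormal{Lip}}\lesssim\|\vepsilon\|_{L^2}+\|\divergence\vepsilon\|_{B_{\infty,1}^0}+\|\Omegaepsilon\|_{B_{\infty,1}^0}$, I would obtain a closed differential inequality for $V_\varepsilon(T)$ in which the driving small parameter is $\Phi(\varepsilon)$. As in Proposition \ref{lower}, the nested exponentials coming from $e^{\|\vepsilon^r/r\|_{L^1_tL^\infty}}$ and from the iterated Gronwall steps yield a bound of the form $V_\varepsilon(T)\le\exp\exp\exp(\cdots\Phi(\varepsilon)\cdots)$; running the open/closed continuity argument on the set $\{t:CV_\varepsilon(t)\le\log(1/\Phi(\varepsilon))\}$ and then choosing $\tilde T_\varepsilon$ so that the triple exponential in $T$ stays below $\log(1/\Phi(\varepsilon))$ gives the announced lower bound and, inserting the bound on $V_\varepsilon$ back, the decay \eqref{disp23}.

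Finally, the incompressible limit is obtained exactly as in the subcritical case: the decay of $(\mathcal{Q}\vepsilon,\cepsilon)$ in $L^r_TL^\infty$, a byproduct of the Strichartz estimate and \eqref{disp23}, lets me show that $(\mathcal{P}\vepsilon)_\varepsilon$ is Cauchy in $L^\infty_TL^2$, with limit $v$, and that $\mathcal{P}(\vepsilon\cdot\nabla\vepsilon)\to\mathcal{P}(v\cdot\nabla v)$, so that $v$ solves \eqref{eq:45}; uniqueness identifies it as the Kato solution. I expect the hard part to be the logarithmic $B_{\infty,1}^0$ estimate for the vorticity: unlike the incompressible case of \cite{AHK}, the term $\Omegaepsilon\,\divergence\vepsilon$ destroys the pure transport structure, so the dynamical interpolation must be carried out while simultaneously absorbing the compressible contribution $\|\divergence\vepsilon\|_{L^1_tB_{p,1}^{3/p}}^2$, and it is precisely the interplay of this with the frequency splitting of the first step that must be balanced consistently throughout the continuity argument.
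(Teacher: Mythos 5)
Your proposal matches the paper's proof in all essentials: the same frequency splitting at threshold $N$ with $2^N\approx\varepsilon^{-1/(2r)}$, combining the Strichartz bound of Corollary \ref{cor11} on low modes with Bernstein plus the $\Psi$-weighted energy estimate on high modes to produce exactly $\Phi(\varepsilon)$; the same use of the logarithmic $B_{\infty,1}^0$ vorticity estimate (Theorem \ref{Thm89}) and the critical Lipschitz bound of Lemma \ref{lem22} to close a Gronwall inequality for $V_\varepsilon$; the same open/closed bootstrap on the set where $e^{CV_\varepsilon(t)}$ stays below a power of $\Phi^{-1}(\varepsilon)$; and the same reduction of the incompressible limit to the subcritical argument. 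The one ingredient you leave implicit --- controlling $\|\diver\vepsilon\|_{L^1_t B_{p,1}^{3/p}}$ by interpolating between the energy norm $B_{2,1}^{3/2}$ and the dispersive norm $B_{\infty,1}^0$ (the paper takes $p=4$) --- is precisely how the paper absorbs the compressible contribution you flag, so your plan is sound as stated.
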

Before giving the proof we will state some remarks.
\begin{Remas}
\begin{enumerate} {\it 
\item  When $\Psi$ has at most an exponential growth at infinity, that is $\Psi(x) \leq  e^{C x}, $ then according to  Theorem $\ref{thm111}$ we get that 
 $$
 T_\varepsilon\geq C_0\log\log\big|\log{\Psi\big(\log\big(\frac{1}{\varepsilon^{\frac{1}{2r}}}\big)\big)}\big|.
 $$ 
 We observe that in the case where $\Psi$ has exactly an exponential growth $\Psi(x)=2^{ax} $ then the  Besov space $B_{2,1}^{s,\Psi}$ reduces to the Besov space $B_{2,1}^{s+a}.$ With this choice  the regularity of the initial data in Theorem $\ref{thm111}$  becomes subcritical  and what we obtain is in agreement  with Theorem $\ref{thm1}$; we get especially the same result on the lower bound of the lifespan.
 \item When $\Psi$ has a polynomial growth  that is $\Psi(x)=(x+2)^\alpha, \alpha>0$ then it is clear that $\Psi\in \mathcal{U}_\infty$ and moreover
 $$
 T_\varepsilon\geq C_0\log\log\log\frac1\varepsilon.
 $$
 }
 \end{enumerate}
\end{Remas}
 For the proof of Theorem \ref{thm111} we shall  examine only the estimate of the lower bound of $T_\varepsilon$ and the estimate \eqref{disp23}. The proof of the incompressible limit can be done similarly to the proof of \mbox{Theorem \ref{thm1}.} We point out that   the analysis in the critical case is rather difficult compared to the subcritical case and we mainly distinguish two  difficulties that one ought to face.
 
  The first one is related  to the criticality of Strichartz estimate for $\|(\diver \vepsilon,\nabla\cepsilon)\|_{L^1_tL^\infty}$. This quantity has the same scale of  the critical Besov space $B_{2,1}^{\frac52}$ and to circumvent this problem we move to a slightly smooth space $B_{2,1}^{\frac52,\Psi}$ but so close to $B_{2,1}^{\frac52}$.  We  will see later in the proof that the additional slow decreasing of the Fourier modes  are needed in order to interpolate the involved Strichartz  norm between the energy norm $\|(\vepsilon,\cepsilon)\|_{L^\infty_tB_{2,1}^{\frac52,\Psi}}$ and  a  Strichartz estimate for a lower norm of \mbox{type $\|(\nabla\Delta^{-1}\diver \vepsilon,\cepsilon)\|_{L^1_tL^\infty}$.   }
  
  The second difficulty that one should  to deal with has a close relation  with the criterion of   Beale-Kato-Majda which is not known to   work for the critical regularities. In other words, the bound  of $\|\Omegaepsilon(t)\|_{L^\infty}$ is not sufficient  to propagate initial regularities and thus one  should estimate a more stronger norm  $ \|\Omegaepsilon(t)\|_{B_{\infty,1}^0}$ instead.  This is the hard part of the proof and the geometric structure of the vorticity will play a significant role, especially the results discussed before in Proposition \ref{mim01}  and Proposition \ref{mim}.

\subsection{Proof of Theorem \ref{thm11}}
The proof of Theorem \ref{thm11} is a consequence of Theorem \ref{thm111} combined with Corollary \ref{corza1}. Indeed, from the assumption 
$$
\sum_{q}2^{\frac52 q}\sup_{0\le\varepsilon\le1}\|(\Delta_q\vepsiloninit,\Delta_q\cepsiloninit)\|_{L^2}<+\infty 
$$
and using Corollary \ref{corza1} we conclude the existence of  $\Psi\in \mathcal{U}_\infty$ such that the family $(\vepsiloninit,\cepsiloninit)_{0<\varepsilon\le1}$ is uniformly bounded in $B_{2,1}^{\frac52,\Psi}$. Now we can  just use Theorem \ref{thm111}.
\subsection{Logarithmic estimate}
Now let us examine a little further some interesting  properties of  the compressible transport model  given by
$$
\mbox{(CT)}\left\lbrace
\begin{array}{l}
\partial_t \Omega+(u\cdot\nabla)\Omega+\Omega\,\diver u=\Omega\cdot\nabla u\\
{\Omega}_{| t=0}=\Omega_0.
\end{array}
\right.
$$
This is the model governing the vorticity dynamics for the system \eqref{eqs:3} and  
it is worth while to study the linear growth of the norm $B_{\infty,1}^{0}$ which is crucial for the analysis of the critical case as it has been already pointed out. Our main result reads as follows.
\begin{thm}\label{Thm89}
Let $p\in[1,+\infty[$, $u$ be an axisymmetric smooth vector field without swirl and $\Omega$ be a smooth vector-valued solution of the equation \textnormal{(CT)}.
We assume that  $\Omega_0$ satisfies 
$$\diver\Omega_0=0\quad\hbox{and}\quad\Omega_0\times e_\theta=0.
$$ Then  
$$
\|\Omega(t)\|_{B_{\infty,1}^0}\le C\|\Omega_0\|_{B_{\infty,1}^0}e^{\|u^r/r\|_{L^1_tL^\infty}}\Big(1+e^{C\|\nabla u\|_{L^1_tL^\infty}}\|\diver{ u}\|_{ L^1_tB_{p,1}^{\frac3p}}^2\big)\Big(1+\int_0^t\|u(\tau)\|_{\textnormal{Lip}}d\tau\Big).
$$
with the notation $\|u\|_{\textnormal{Lip}}:=\|u\|_{L^\infty}+\|\nabla u\|_{L^\infty}$ and $C$ depends only on $p$.

\end{thm}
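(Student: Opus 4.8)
The plan is to run a log-free transport estimate in the borderline space $B_{\infty,1}^0$ in the spirit of \cite{AHK}, the genuinely new point being the treatment of the compressible source $(\diver{u})\Omega$ by a dynamical frequency splitting. First I would invoke Proposition \ref{mim}: the conditions $\diver{\Omega_0}=0$ and $\Omega_0\times e_\theta=0$ are propagated, so the equation reduces to
\begin{equation*}
(\partial_t+u\cdot\nabla+\diver{u})\Omega=\frac{u^r}{r}\,\Omega,
\end{equation*}
with $\Omega(t)=\Omega^\theta e_\theta$ for all $t$. This reduction is essential: it makes available for the vorticity and its dyadic blocks all the geometric identities of Proposition \ref{mim0} and Proposition \ref{mim01}, in particular $S_q\big(u(t)\circ\psi(t)\big)\cdot e_\theta=0$. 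Integrating the reduced equation along the flow $\psi$ immediately gives the $L^\infty$ bound
\begin{equation*}
\|\Omega(t)\|_{L^\infty}\le\|\Omega_0\|_{L^\infty}\,e^{\|u^r/r\|_{L^1_tL^\infty}+\|\diver{u}\|_{L^1_tL^\infty}},
\end{equation*}
which will feed every $\|\Omega\|_{L^\infty}$ slot below (note $e^{\|\diver{u}\|_{L^1_tL^\infty}}\le e^{\|\nabla u\|_{L^1_tL^\infty}}$).

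Next I would localize, writing for each block
\begin{equation*}
(\partial_t+u\cdot\nabla)\Delta_q\Omega=\Delta_q\Big(\tfrac{u^r}{r}\,\Omega\Big)-\Delta_q\big((\diver{u})\Omega\big)-[\Delta_q,u\cdot\nabla]\Omega,
\end{equation*}
estimating each block by the maximum principle along $\psi$ and then summing in $q$. The scalar stretching $\tfrac{u^r}{r}\Omega$ is kept inside the transport as a multiplicative potential, so that, exactly as in the $L^\infty$ bound, it produces the prefactor $e^{\|u^r/r\|_{L^1_tL^\infty}}$. For the transport commutator $\sum_q\|[\Delta_q,u\cdot\nabla]\Omega\|_{L^\infty}$ I would reproduce the axisymmetric log-free scheme of \cite{AHK}: composing with the flow and exploiting $S_q(u\circ\psi)\cdot e_\theta=0$ together with $\Omega=\Omega^\theta e_\theta$ removes the logarithmic loss that a generic transport would incur, and converts the expected $e^{C\|\nabla u\|_{L^1_tL^\infty}}$ into the linear factor $\big(1+\int_0^t\|u(\tau)\|_{\textnormal{Lip}}d\tau\big)$.

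The heart of the proof is the compressible source $(\diver{u})\Omega$ measured in $B_{\infty,1}^0$, where a direct product law is unavailable. Bony's decomposition bounds the paraproduct $T_{\diver{u}}\Omega$ by $\|\diver{u}\|_{L^\infty}\|\Omega\|_{B_{\infty,1}^0}$ (a harmless self-referential term closed by Gronwall, yielding $e^{C\|\nabla u\|_{L^1_tL^\infty}}$) and the symmetric paraproduct $T_\Omega\diver{u}$ by $\|\Omega\|_{L^\infty}\|\diver{u}\|_{B_{p,1}^{\frac3p}}$, but the remainder $\mathcal R(\diver{u},\Omega)$ carries at regularity $0$ a factor linear in $q$ that is not summable against $\|\diver{u}\|_{B_{p,1}^{\frac3p}}$. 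To absorb this logarithm I would split the frequency sum at a threshold $N$: the low part is controlled by $(N+C)\|\diver{u}\|_{B_{p,1}^{\frac3p}}\|\Omega\|_{L^\infty}$, using Bernstein $\|\Delta_q\diver{u}\|_{L^\infty}\lesssim 2^{3q/p}\|\Delta_q\diver{u}\|_{L^p}$ (whence the restriction $p<\infty$ and the $p$-dependence of $C$), while the high part is summed against the fast-decaying tail of $\diver{u}\in B_{p,1}^{\frac3p}$. Optimizing $N$ dynamically (in time) and closing the self-referential Gronwall term is what assembles the compressible factor $\big(1+e^{C\|\nabla u\|_{L^1_tL^\infty}}\|\diver{u}\|_{L^1_tB_{p,1}^{\frac3p}}^2\big)$; the quadratic power of $\|\diver{u}\|_{B_{p,1}^{\frac3p}}$ arises from the balance between the two frequency regimes combined with the Gronwall closure.

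The main obstacle I anticipate is precisely this dynamical interpolation on the borderline product $(\diver{u})\Omega$: the threshold must be chosen so that the unavoidable logarithm coming from $\mathcal R(\diver{u},\Omega)$ is traded for an algebraic loss in $\|\diver{u}\|_{B_{p,1}^{\frac3p}}$ \emph{without} reintroducing a logarithmic loss in $\|\nabla u\|_{L^\infty}$ (which would spoil the linear factor inherited from \cite{AHK}) and \emph{without} appealing to any norm of $\Omega$ stronger than $B_{\infty,1}^0$ and $L^\infty$, since only these are controlled by the data. A secondary technical point is to verify that the geometric cancellations survive the presence of a nonzero divergence, i.e.\ that $\Omega(t)\times e_\theta=0$ and the block identities of Proposition \ref{mim0} persist along the flow, so that the \cite{AHK} commutator scheme indeed applies to the stretching and transport terms.
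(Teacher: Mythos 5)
There is a genuine gap here, and it is architectural. Your scheme localizes the \emph{solution} (blocks $\Delta_q\Omega$), estimates each block by the maximum principle plus commutator and product bounds, and closes by Gronwall. Any such closure can only produce an exponential factor $e^{C\int_0^t\|u\|_{\textnormal{Lip}}d\tau}$, never the linear factor $\big(1+\int_0^t\|u(\tau)\|_{\textnormal{Lip}}d\tau\big)$ asserted in the statement --- and that linear factor is the entire content of the theorem: in the bootstrap of Section \ref{critik} an exponential bound in the Lipschitz norm would not close. You appeal to ``the axisymmetric log-free scheme of \cite{AHK}'' to repair the transport commutator, but that scheme is not a commutator estimate and cannot be grafted onto your block equation: frequency cut-offs do not commute with composition by the flow, so ``composing with the flow and exploiting $S_q\big(u(t)\circ\psi(t)\big)\cdot e_\theta=0$'' has no meaning for $[\Delta_q,u\cdot\nabla]\Omega$. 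What the paper actually does is localize the \emph{initial data}: it sets $\tilde\Omega_q$ to be the solution of (CT) with datum $\Delta_q\Omega_0$, so that $\Omega=\sum_q\tilde\Omega_q$ by linearity; it filters the divergence in Lagrangian coordinates, $\eta_q(t,x)=e^{G(t,x)}\tilde\Omega_q(t,\psi(t,x))$; it proves the off-diagonal decay \eqref{kor}, $\|\Delta_j(e^{F(t)}\tilde\Omega_q(t))\|_{L^\infty}\lesssim 2^{-\frac12|j-q|}\|\Delta_q\Omega_0\|_{L^\infty}e^{CV_p(t)}$, i.e.\ propagation of $B_{\infty,\infty}^{\pm\frac12}$ for each filtered piece; and only then sums over $j,q$, splitting at $|j-q|\le N$ with $N\approx V_p(t)$, so that the exponential hits only the tail $2^{-N/2}e^{CV_p(t)}$. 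That trade is what produces the linear factor, and nothing in your proposal plays the role of this decay estimate.

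Second, the stretching term is a gap of its own. After applying $\Delta_q$ you cannot keep $\frac{u^r}{r}\Omega$ ``inside the transport as a multiplicative potential'': the error $\Delta_q\big(\frac{u^r}{r}\Omega\big)-\frac{u^r}{r}\Delta_q\Omega$ --- equivalently the Bony pieces $T_\Omega(u^r/r)$ and $\mathcal{R}(u^r/r,\Omega)$ --- is exactly the borderline $L^\infty\times B_{\infty,1}^0$ product that you yourself flag as impossible for $(\diver{u})\,\Omega$, and for $u^r/r$ the hypotheses give you nothing better than $L^\infty$. The paper never forms a Besov product with $u^r/r$: in Lagrangian variables the filtered, data-localized piece solves $\partial_t\eta_q^1=U^2\,\eta_q^1/x_2(t)$, the singular ratio is exactly conserved, $\eta_q^1(t,x)/x_2(t)=\Delta_q\Omega_0^1(x)/x_2$, and its regularity is supplied by the initial data (through $\Delta_q\Omega_0^1(x_1,0,x_3)=0$, Taylor's formula and Lemma \ref{dilatation}) together with the vanishing $S_{j-1}U^2(x_1,0,x_3)=0$ of Proposition \ref{mim01}; the potential $u^r/r$ enters only pointwise along trajectories, whence the factor $e^{\|u^r/r\|_{L^1_tL^\infty}}$.

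Finally, your diagnosis of the compressible product is off target. For $p<\infty$ the remainder $\mathcal{R}(\diver{u},\Omega)$ is not the obstruction: applying Bernstein to the blocks of the product (not to $\Delta_q\diver{u}$ alone) yields the law $\|fg\|_{B_{\infty,1}^0}\le C\|g\|_{B_{\infty,1}^0}\|f\|_{B_{p,1}^{\frac3p}}$, which the paper states and uses, so your threshold device there solves a non-issue. In the paper the quadratic power $\|\diver{u}\|_{L^1_tB_{p,1}^{\frac3p}}^2$ has a different origin: one power comes from the choice $N\approx V_p(t)$ in the double-sum splitting (since $V_p$ contains $\|\diver{u}\|_{L^1_tB_{p,1}^{\frac3p}}$), and the second from undoing the filtration at the end, $\Omega=\zeta+(e^{-F}-1)\zeta$ with $\|e^{-F}-1\|_{B_{p,1}^{\frac3p}}\le Ce^{C\|\nabla u\|_{L^1_tL^\infty}}\|\diver{u}\|_{L^1_tB_{p,1}^{\frac3p}}$.
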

\begin{proof}
First of all, we point out that for the incompressible case, that is $\diver u=0,$ this result was proved in \cite{AHK} by using a tricky splitting of $\Omega$ combined with some geometric aspects  of axisymmetric flows. We shall use here the same approach of  \cite{AHK}, however  the lack of the incompressibility brings more  technical difficulties that we are unable to circumvent without using some refined geometric properties of axisymmetric vector fields. The results previously examined   in Proposition \ref{mim0} and Proposition \ref{mim01} are  very crucial at  different steps of the proof. 
   
Before going further in the details we will first  discuss the main idea of the proof. We localize in frequency the initial data and consider the solution of the same problem (CT). Then by linearity we can rebuild the solution of the initial problem by a superposition argument. The main information that we shall establish in order to get the logarithmic estimate is a  frequency decay property. To be more precise, let $q\geq -1$ and  denote by $\tilde{\Omega}_{q}$  the unique global vector-valued solution of the problem
\begin{equation}\label{td1}
\left\lbrace
\begin{array}{l}
\partial_t \tilde{\Omega}_{q}+(u\cdot\nabla)\tilde{\Omega}_{q}+\tilde{\Omega}_{q}\,\diver u=\tilde\Omega_{q}\cdot\nabla u\\
{\tilde\Omega_q}{_{|t=0}}=\Delta_{q}\Omega_{0}.
\end{array}
\right.
\end{equation} 
Since $\diver \Delta_{q}\Omega_{0}=\Delta_q\diver\Omega_0=0,$ then it follows from Proposition \ref{mim} that 
$\diver\tilde\Omega_{q}(t,x)=0.$ Using the linearity of the problem and the uniqueness of the solutions we get the following decomposition\begin{equation}\label{LU}
\Omega(t,x)=\sum_{q\geq-1}\tilde{\Omega}_{q}(t,x).
\end{equation}
 We wish now to prove some frequency  decay for $\tilde\Omega_q(t).$  More precisely we will establish the following estimate: let $\psi$ be the flow associated to  $u$ and $F$ be the function defined by  $$
 F(t,x)=\int_0^t(\diver u)\big(\tau,\psi(\tau)\circ\psi^{-1}(t,x)\big)d\tau.
 $$
 Then for $p\in[1,6[$ and  for every  $j,q\geq-1$ we have
 \begin{equation}\label{kor}
\|\Delta_j(e^{F(t)}\tilde\Omega_q(t) )\|_{L^\infty}\le C2^{-\frac12|j-q|}\|\Delta_q\Omega_0\|_{L^\infty}\exp\big\{C\big(\|\nabla u\|_{L^1_tL^\infty}+\|\diver u\|_{L^1_tB_{p,1}^{\frac3p}}\big)\big\}.
 \end{equation}
  In the language of Besov spaces, this estimate is equivalent to 
 \begin{equation}\label{korr1}
  \|e^{F(t)}\tilde\Omega_q(t) )\|_{B_{\infty,\infty}^{\pm\frac12}}\le C\|\Delta_q\Omega_0\|_{B_{\infty,\infty}^{\pm\frac12}}\exp\big\{C\big(\|\nabla u\|_{L^1_tL^\infty}+\|\diver u\|_{L^1_tB_{p,1}^{\frac3p}}\big)\big\}.
  \end{equation}
  We remark that in the statement of Theorem \ref{Thm89} we require $p$ to belong to $[1,+\infty[$ which is not the case in the above claim \eqref{korr1}. To do so, we should work with a regularity of size $\eta\in]0,1[$ instead of $\frac12$ and the condition on $p$ will be $-\eta+\frac3p>0.$ 
Before going further into the detail we shall recall some classical properties for the flow $\psi$. It is defined by the  integral equation
$$
\psi(t,x)=x+\int_0^tv(\tau,\psi(\tau,x))d\tau, \, x\in\RR^3, t\geq 0.
$$
The next estimates for the  flow and its inverse $\psi^{-1}$ are well-known
\begin{equation}\label{Eqz33}
\|\nabla\psi^{\pm 1}(t)\|_{L^\infty}\le e^{\int_0^t\|\nabla v(\tau)\|_{L^\infty}d\tau},\quad \|\nabla\big(\psi(\tau)\circ\psi^{-1}(t)\big)\|_{L^\infty}\le e^{|\int_\tau^t\|\nabla v(\tau)\|_{L^\infty}d\tau|}.
\end{equation}

  The propagation of the negative regularity is  much easier than the positive one which needs more elaborated analysis.
 We apply first the
 Proposition \ref{Lems2}, yielding to
\begin{eqnarray}\label{bs10}
\nonumber e^{-CV_p(t)}\|\www_{q}(t)\|_{B_{\infty,\infty}^{-\frac12}}&\lesssim&\|\Delta_{q}\Omega_0\|_{B_{\infty,\infty}^{-\frac12}}+\int_0^te^{-CV_p(\tau)}\|\tilde{\Omega}_{q}^1\,\diver u\|_{ B_{\infty,\infty}^{-\frac12}}d\tau\\
&+&\int_{0}^te^{-CV_p(\tau)}\|\www_{q}\cdot\nabla u(\tau)\|_{B_{\infty,\infty}^{-\frac12}}d\tau.
\end{eqnarray}
with the following notations,
$$V_p(t):=\|u\|_{L^1_t\textnormal{Lip}}+\|\diver u\|_{L^1_tB_{p,1}^{\frac3p}},\,\, p\in[1,6[;\quad \|u\|_{\textnormal{Lip}}:=\|u\|_{L^\infty}+\|\nabla u\|_{L^\infty}.
$$
The first integral term of the right-hand side can be estimated as follows: for $p\in[1,6[$
$$
\int_0^te^{-CV_p(\tau)}\|\tilde{\Omega}_{q}^1\,\diver u\|_{ B_{\infty,\infty}^{-\frac12}}d\tau\le C\int_0^te^{-CV_p(\tau)}\|\diver u\|_{B_{p,1}^{\frac3p}}\|\tilde{\Omega}_{q}^1\|_{ B_{\infty,\infty}^{-\frac12}}d\tau
$$
We have used in the preceding estimate the following law product: for $s<0,\, p\in[1,+\infty[$ such \mbox{that $s+\frac3p>0$} we have
\begin{equation}\label{prod1}
\|fg\|_{B_{\infty,\infty}^{s}}\le C\|f\|_{B_{p,1}^{\frac3p}}\|g\|_{B_{\infty,\infty}^{s}}.
\end{equation}
The proof of this estimate can be done for example through the use of Bony's decomposition. Let us now come back to \eqref{bs10} and examine the last integral term. Using Bony's decomposition, we get
\begin{eqnarray*}
\|\www_{q}\cdot\nabla u\|_{B_{\infty,\infty}^{-\frac12}}&\leq&\|T_{\www_{q}}\cdot\nabla u\|_{B_{\infty,\infty}^{-\frac12}}+\|T_{\nabla u}\cdot\www_{q}\|_{B_{\infty,\infty}^{-\frac12}}\\
&+&\|\mathcal{R}\big({\www_{q}}\cdot\nabla, u\big)\|_{B_{\infty,\infty}^{-\frac12}}\\
&\lesssim&\|\nabla u\|_{L^\infty}\|\www_{q}\|_{B_{\infty,\infty}^{-\frac12}}+\|\mathcal{R}\big({\www_{q}}\cdot\nabla, u\big)\|_{B_{\infty,\infty}^{-\frac12}}.
\end{eqnarray*}
Since $\diver\www_{q}(t)=0,$ then the remainder term can be treated as follows
\begin{eqnarray*}
\|\mathcal{R}\big({\www_{q}}\cdot\nabla, u\big)\|_{B_{\infty,\infty}^{-\frac12}}&=&\|\diver\mathcal{R}\big({\www_{q}}\otimes, u\big)\|_{B_{\infty,\infty}^{-\frac12}}\\
&\lesssim&\sup_{k}2^{k\over2}\sum_{j\geq k-3
}\|\Delta_{j}\www_{q}\|_{L^\infty}\|\widetilde\Delta_{j}u\|_{L^\infty}\\
&\lesssim& \|\www_{q}\|_{B_{\infty,\infty}^{-\frac12}}\|u\|_{B_{\infty,\infty}^1}\\
&\lesssim&\|\www_{q}\|_{B_{\infty,\infty}^{-\frac12}}\|u\|_{\textnormal{Lip}}
\end{eqnarray*}
where we have used the embedding $\textnormal{Lip}\hookrightarrow B_{\infty,\infty}^1$. Combining these estimates yields to
\begin{equation}\label{prs11}
\|\www_{q}\cdot\nabla u\|_{B_{\infty,\infty}^{-\frac12}}\lesssim\|\www_{q}\|_{B_{\infty,\infty}^{-\frac12}}\|u\|_{\textnormal{Lip}}.
\end{equation}
Now by inserting this estimate into (\ref{bs10}) we obtain
\begin{eqnarray*}
e^{-CV_p(t)}\|\www_{q}(t)\|_{B_{\infty,\infty}^{-\frac12}}&\lesssim&\|\Delta_{q}\Omega_0\|_{B_{\infty,\infty}^{-\frac12}}\\
&+&\int_{0}^t\big(\|u(\tau)\|_{\textnormal{Lip}}+\|\diver u\|_{B_{p,1}^{\frac3p}}\big)e^{-CV_p(\tau)}\|\www_{q}(\tau)\|_{B_{\infty,\infty}^{-\frac12}}d\tau.
\end{eqnarray*}

Hence we obtain by Gronwall's inequality
\begin{eqnarray}\label{tao3}
\|\www_{q}(t)\|_{B_{\infty,\infty}^{-\frac12}}&\leq& C\|\Delta_{q}\Omega_0\|_{B_{\infty,\infty}^{-\frac12}}e^{CV_p(t)}.
\end{eqnarray}
Now we claim that for any $p\in[1,6[$ we have
\begin{eqnarray}\label{ts1}
\|e^{F(t)}\tilde\Omega_q(t)\|_{B_{\infty,\infty}^{-\frac12}}&\le&C \|\Delta_q\Omega_0\|_{B_{\infty,\infty}^{-\frac12}}e^{CV_p(t)}.
\end{eqnarray}
 Indeed, by \eqref{prod1} we get   for $p\in[1,6[$
 \begin{eqnarray}\label{tao2}
 \nonumber\|e^{F(t)} \www_q(t)\|_{B_{\infty,\infty}^{-\frac12}}&\le&\|\www_q(t)\|_{B_{\infty,\infty}^{-\frac12}}+\big\|\big(e^{F(t)}-1\big) \www_q(t)\big\|_{B_{\infty,\infty}^{-\frac12}}\\
 &\lesssim&\|\www_q(t)\|_{B_{\infty,\infty}^{-\frac12}}+\big\|e^{F(t)}-1\big\|_{B_{p,1}^{\frac3p}} \|\www_q(t)\big\|_{B_{\infty,\infty}^{-\frac12}}.
 \end{eqnarray}
 We shall use  the following estimate 
 \begin{eqnarray}\label{tao1}
 \nonumber\big\|e^{F(t)}-1\big\|_{B_{p,1}^{\frac3p}}&\lesssim&  \big\|F(t)\big\|_{B_{p,1}^{\frac3p}} e^{C\|F(t)\|_{L^\infty}}\\
 &\lesssim&  \big\|F(t)\big\|_{B_{p,1}^{\frac3p}} e^{C\|\diver{u}\|_{L^1_tL^\infty}}
\end{eqnarray}
 which can be deduced for instance from the result: There exists $C>0$ such that for $u\in B_{p,1}^{\frac3p}$ and $n\in\NN$ we have
 $$
 \|u^n\|_{ B_{p,1}^{\frac3p}}\le C^{n-1}\|u\|_{L^\infty}^{n-1}\|u\|_{ B_{p,1}^{\frac3p}}.
 $$
 To estimate $F(t)$ we will use 
 the  following composition law, see for instance \cite{MR86j:46026} : for $s\in]0,1[,p,r\in[1,\infty],  f\in B_{p,r}^{s}$ and $\psi$ a diffeomorphism then  we have
 $$
 \|f\circ\psi\|_{B_{p,r}^{s}}\le C\|f\|_{B_{p,r}^{s}}\big(1+\|\nabla\psi\|_{L^\infty}^s\big).
 $$
 Applying this result combined with \eqref{Eqz33} we get for $p\in]3,\infty[$
\begin{eqnarray*}
 \|F(t)\|_{ B_{p,1}^{\frac3p}}&\le&  \int_0^t\|\diver{ u(\tau)}\|_{ B_{p,1}^{\frac3p}}\big(1+\|\nabla\big( \psi(\tau,\psi^{-1}(t,\cdot))\big)\|_{L^\infty}^{\frac3p}\big)d\tau\\
 &\lesssim&  \int_0^t\|\diver{ u(\tau)}\|_{ B_{p,1}^{\frac3p}} e^{\int_{\tau}^t\|\nabla u(s)\|_{L^\infty}ds}d\tau.
 \end{eqnarray*}
Combining this estimate with \eqref{tao1} yields to
\begin{equation}\label{kao45}
\|e^{F(t)}-1\big\|_{B_{p,1}^{\frac3p}}\le C  e^{C\|\nabla u\|_{L^1_tL^\infty}}\int_0^t\|\diver{ u(\tau)}\|_{ B_{p,1}^{\frac3p}}d\tau.
\end{equation}
Putting together this estimate,  \eqref{tao2} and \eqref{tao3} 
\begin{eqnarray*}
 \nonumber\|e^{F(t)}\tilde\Omega_q(t)\|_{B_{\infty,\infty}^{-\frac12}}&\lesssim&\|\www_q(t)\|_{B_{\infty,\infty}^{-\frac12}}\Big(1+  e^{C\|\nabla u\|_{L^1_tL^\infty}}\|\diver{ u}\|_{ L^1_tB_{p,1}^{\frac3p}}\Big)\\
 \nonumber&\lesssim&  \|\Delta_q\Omega_0\|_{B_{\infty,\infty}^{-\frac12}}e^{CV_p(t)}\Big(1+  e^{C\|\nabla u\|_{L^1_tL^\infty}}\|\diver{ u}\|_{L^1_t B_{p,1}^{\frac3p}}\Big)\\
&\le& C \|\Delta_q\Omega_0\|_{B_{\infty,\infty}^{-\frac12}}e^{CV_p(t)}.
 \end{eqnarray*}

This completes the proof of \eqref{korr1} in the case of the negative regularity. It remains to show the estimate for $B_{\infty,\infty}^{\frac12}$ which is the hard part of the proof. But before  giving precise discussions about  the difficulties, we shall  rewrite under a suitable way   the stretching term of  the equation (\ref{td1}). For this purpose we use 
 Proposition \ref{mim0}-$(3)$ leading to $(\Delta_{q}\Omega_{0})\times e_{\theta}=0.$ Now according to the   Proposition \ref{mim}-$(2)$ this geometric property is conserved through the time, that is $\www_{q}(t)\times e_{\theta}=0,$ and furthermore the equation \eqref{td1} becomes
\begin{equation}\label{td2}
\left\lbrace
\begin{array}{l}
\partial_t \tilde{\Omega}_{q}+(u\cdot\nabla)\tilde{\Omega}_{q}+\tilde{\Omega}_{q}\,\diver u=\frac{u^r}{r}\tilde\Omega_{q}\\
{\tilde\Omega_q}{_{|t=0}}=\Delta_{q}\Omega_{0}.
\end{array}
\right.
\end{equation}
Applying the maximum principle and using Gronwall's inequality we obtain
\begin{eqnarray}\label{maxx1}
\|\tilde \Omega_{q}(t)\|_{L^\infty}
&\leq& \|\Delta_{q}\Omega_{0}\|_{L^\infty}e^{ 
\|(\diver u,{u^r/r})\|_{L^1_tL^\infty}}.
\end{eqnarray}
From the geometric constraint  $\www_{q}(t)\times e_{\theta}=0$ we see that the solution $\www_{q}$ has two components in the Cartesian basis, $\tilde{\Omega}_{q}=(\tilde{\Omega}_{q}^1,\tilde{\Omega}_{q}^2,0),$ and the mathematical analysis will be  the same for both components. Thus we will just focus   on the treatment of  the first component $\tilde{\Omega}_{q}^1$.

From the identity ${{u^r}\over{ r}}={{u^1}\over{x_{1}}}={{u^2}\over{ x_{2}}},$ which is an easy consequence of $u^\theta=0$, it is clear that 
the  \mbox{function $\tilde{\Omega}_{q}^1$} satisfies the equation
$$
\left\lbrace
\begin{array}{l}
\partial_t \tilde{\Omega}_{q}^1+(u\cdot\nabla)\tilde{\Omega}_{q}^1+\tilde{\Omega}_{q}^1\,\diver u=u^2{{\tilde{\Omega}_q^1}\over x_{2}},\\
{\tilde\Omega_q}^1{_{|t=0}}=\Delta_{q}\Omega_{0}^1.
\end{array}
\right.
$$
 Now we shall discuss the most difficulties encountered when we would like to propagate positive regularities and explain how to circumvent them.
In the work of \cite{AHK}, the velocity is divergence-free and thus the persistence of the the regularity $B_{\infty,\infty}^s, 0<s\leq1$ is guaranteed by the special structure of the stretching term $\frac{v^r}{r}\tilde{\Omega}_q$ where the axisymmetry of the velocity plays a central role. In our case the velocity is not incompressible and the new term $\tilde{\Omega}_q\,\diver u$ does not in general belong to  any space of type $B_{\infty,\infty}^s, s>0$ because in the context of critical regularities the quantity $\diver u$ is only allowed to be in $L^\infty$ or in any Banach space with the same scaling. Our idea is to filtrate the compressible part leading to  a new function governed by an equation where all the terms have positive regularity. But we need to be careful with the filtration procedure, its commutation with the transport part leads to  a bad term and that's why we have to extract the compressible part after using the Lagrangian coordinates. For the simplicity, we will use the following notation: we denote \mbox{by $t\mapsto \big(x_1(t),x_2(t),x_3(t)\big)=\psi(t,x),$} the path of the individual particle located initially at the position $x$. Now, we introduce the new functions:
$$\eta_q(t,x)=e^{G(t,x)} \www_q(t,\psi(t,x)),
$$
and
$$ G(t,x)= \int_0^t(\diver u)(\tau,\psi(\tau,x))d\tau,\quad U(t,x)=u(t,\psi(t,x)).
$$ 
As before, we shall restrict attention to the treatment of the first component of $ \tilde{\Omega}_q$, the second one can be teated in a similar way. It is not hard to see that the function $\eta_q^1$ satisfies  the equation
\begin{equation}\label{kao1}
\partial_t\eta_q^1(t,x)=U^2(t,x)\frac{\eta_q^1(t,x)}{x_2(t)},\,\eta_q(0,x)=\Delta_q\Omega_0^1(x).
\end{equation}
Accordingly, we deduce from this equation that the quantity $t\mapsto\frac{\eta_q^1(t,x)}{x_2(t)}$ is conserved. Indeed, differentiate this function with respect to $t$, then use the equation of the flow we find
\begin{eqnarray*}
\partial_t\big(\frac{\eta_q^1(t,x)}{x_2(t)}\big)&=&\frac{x_2(t)\partial_t\eta_q^1(t,x)-\eta_q^1(t,x)x_2^\prime(t)}{x_2^2(t)}\\
&=&\frac{U^2(t,x)\eta_q^1(t,x)-\eta_q^1(t,x)u^2(t,\psi(t,x))}{x_2^2(t)}\\
&=&0.
\end{eqnarray*}
It follows that
\begin{equation}\label{eqid}
\frac{\eta_{q}^1(t,x)}{x_{2}(t)}=\frac{\Delta_q\Omega_0^1(x)}{x_{2}}\cdot
\end{equation}
Now, we are going to estimate $\eta_q^1$ in the Besov space $B_{\infty,\infty}^{\frac12}$ and thus we prove the remaining case of  \eqref{korr1} concerning the positive regularity. Applying Taylor's formula to the equation \eqref{kao1} and using \eqref{eqid} we obtain
\begin{eqnarray}\label{Eqid12}
\nonumber\|\eta_q^1(t)\|_{B_{\infty,\infty}^{\frac12}}&\le& \|\eta_q^1(0)\|_{B_{\infty,\infty}^{\frac12}}+\int_0^t \Big\|U^2(\tau)\frac{\eta_q^1(\tau)}{x_2(\tau)}\Big\|_{B_{\infty,\infty}^{\frac12}}d\tau\\
&\le&\|\Delta_q\Omega_0^1\|_{B_{\infty,\infty}^{\frac12}}+\int_0^t \Big\|U^2(\tau)\frac{\eta_q^1(\tau)}{x_2(\tau)}\Big\|_{B_{\infty,\infty}^{\frac12}}d\tau.
\end{eqnarray}
To estimate the integrand in the right-hand side in the preceding  estimate, we  shall make use of the paradifferential calculus throughout  Bony's decomposition. So, we get by definition and from the triangular inequality,
\begin{eqnarray*}
\Big\|U^2\frac{\eta_{q}^1}{x_{2}(t)}\Big\|_{B_{\infty,\infty}^{\frac12}}\leq\Big\|T_{\frac{\eta_{q}^1}{x_{2}(t)}}U^2\Big\|_{B_{\infty,\infty}^{\frac12}}+
\Big\|T_{U^2}\frac{\eta_{q}^1}{x_{2}(t)}\Big\|_{B_{\infty,\infty}^{\frac12}}+
\Big\|\mathcal{R}\big(U^2,\frac{\eta_{q}^1}{x_{2}(t)}\big)\Big\|_{B_{\infty,\infty}^{\frac12}}.
\end{eqnarray*}
The estimate of the first paraproduct and the remainder term are easy compared to the second paraproduct for which we will use some sophisticated  analysis. Now for the first paraproduct we write by definition and by the continuity of the Fourier cut-offs,
\begin{eqnarray}\label{bs1}
\nonumber\Big\|T_{\frac{\eta_{q}^1}{x_{2}(t)}}U^2\Big\|_{B_{\infty,\infty}^{\frac12}}&=& \sup_{k\geq-1}2^{\frac{k}{2}}\sum_{j\in\NN}\|\Delta_k(S_{j-1}(\eta_{q}^1/x_{2}(t)) \Delta_j U^2)\|_{L^\infty}\\
\nonumber&=& \sup_{k\geq-1}2^{\frac{k}{2}}\sum_{|j-k|\le 4}\|\Delta_k(S_{j-1}(\eta_{q}^1/x_{2}(t)) \Delta_j U^2)\|_{L^\infty}\\
\nonumber&\lesssim& \sup_{j\in \NN} 2^{\frac{j}{2}}\|\Delta_j U^2\|_{L^\infty}\|S_{j-1}(\eta_{q}^1/x_{2}(t)\|_{L^\infty} \\
\nonumber&\lesssim& \sup_{j\in \NN} 2^{j} \|\Delta_j U^2\|_{L^\infty} \sup_{j\in \NN} 2^{-\frac{j}{2}} \Big\|S_{j-1}\big(\frac{\eta_{q}^1}{x_{2}(t)}\big)\Big\|_{L^\infty}\\
&\lesssim&\| \nabla U\|_{L^\infty}\Big\|\frac{\eta_{q}^1}{x_{2}(t)}\Big\|_{B_{\infty,\infty}^{-\frac12}},
\end{eqnarray}
where we have used  Bernstein inequality in the last line. Now, using the identity \eqref{eqid}  we get
\begin{eqnarray*}
\Big\|\frac{\eta_{q}^1}{x_{2}(t)}\Big\|_{B_{\infty,\infty}^{-\frac12}}&=&\Big\|\frac{\Delta_q\Omega_0^1}{x_{2}}\Big\|_{B_{\infty,\infty}^{-\frac12}}\\
&\lesssim& 2^{\frac12q}\|\Delta_q\Omega_0\|_{L^\infty}\approx \|\Delta_q\Omega_0\|_{B_{\infty,\infty}^{\frac12}}
\end{eqnarray*}
The proof of  last inequality can be done as follows: applying Proposition \ref{mim0}-$(3)$ we get the  identity $\Delta_q\Omega_0^1(x_{1},0,x_3)=0$. This yields in view of  Taylor's expansion
$$
\Delta_q\Omega_0^1(x_{1},x_{2},x_3)=x_{2}\int^1_{0}(\partial_{x_{2}} \Delta_q\Omega_0^1)(x_{1},\tau x_{2},x_3)d\tau
$$
and hence we obtain by Lemma \ref{dilatation} and Bernstein inequality,
\begin{eqnarray}\label{log1}
\nonumber\Big\|\frac{\Delta_q\Omega_0^1}{x_{2}}\Big\|_{B_{\infty,\infty}^{-\frac12}}&\le&\int_0^1\|(\partial_{2} \Delta_q\Omega_0^1)(\cdot,\tau \cdot,\cdot)\|_{B_{\infty,\infty}^{-\frac12}}d\tau\\
\nonumber&\le& C\|\partial_{2} \Delta_q\Omega_0^1\|_{B_{\infty,\infty}^{-\frac12}}\int_0^1\tau^{-\frac12}d\tau\\
 &\le&C\| \Delta_q\Omega_0^1\|_{B_{\infty,\infty}^{\frac12}}.
\end{eqnarray}
Therefore we get by \eqref{bs1}
$$
\Big\|T_{\frac{\eta_{q}^1}{x_{2}(t)}}U^2\Big\|_{B_{\infty,\infty}^{\frac12}}\lesssim\|\nabla U\|_{L^\infty} \|\Delta_q\Omega_0\|_{B_{\infty,\infty}^{\frac12}}.
$$ 
 Now according to Leibniz formula and \eqref{Eqz33}  we find
\begin{eqnarray}\label{lz1}
\nonumber\|\nabla U(t)\|_{L^\infty}&\le& \|\nabla u(t)\|_{L^\infty}\|\nabla\psi(t)\|_{L^\infty}\\
&\le&  \|\nabla u(t)\|_{L^\infty}e^{ \|\nabla u\|_{L^1_tL^\infty}}.
\end{eqnarray}
Hence it follows that
\begin{eqnarray}\label{para45}
\nonumber\Big\|T_{\frac{\eta_{q}^1}{x_{2}(\cdot)}}U^2\Big\|_{L^1_tB_{\infty,\infty}^{\frac12}}&\le& C \|\nabla u\|_{L^1_tL^\infty}e^{ \|\nabla u\|_{L^1_tL^\infty}}\|\Delta_q\Omega_0\|_{B_{\infty,\infty}^{\frac12}}\\
&\le& Ce^{C \|\nabla u\|_{L^1_tL^\infty}}\|\Delta_q\Omega_0\|_{B_{\infty,\infty}^{1\over2}}.
\end{eqnarray}
To estimate the  remainder term we use its definition combined with \eqref{log1},
\begin{eqnarray}\label{bs2}
\nonumber
\Big\|\mathcal{R}\big(U^2,\frac{\eta_{q}^1}{x_{2}(t)}\big)\Big\|_{B_{\infty,\infty}^{1\over2}}&\lesssim &\sup_{k\geq-1}\sum_{j\geq k-3}2^{\frac{k}{2}}\|\Delta_{j}U^2\|_{L^\infty}\Big\|\widetilde\Delta_{j}\big(\frac{\eta_{q}^1}{x_{2}}\big)\Big\|_{L^\infty}
\\
\nonumber&\lesssim &\|U\|_{B_{\infty,\infty}^1}\Big\|\frac{\eta_{q}^1}{x_{2}(t)}\Big\|_{B_{\infty,\infty}^{- \frac{1}{2}}}\\
&\lesssim& \|U\|_{B_{\infty,\infty}^1} \|\Delta_q\Omega_0\|_{B_{\infty,\infty}^{\frac12}}.
\end{eqnarray}
Straightforward computations  combined with the embedding $ 
\hbox{Lip}(\RR^3)\hookrightarrow B_{\infty,\infty}^1$  imply that
\begin{eqnarray*}
\|U(t)\|_{B_{\infty,\infty}^{1}}&\lesssim& \|U(t)\|_{L^\infty}+\|\nabla U(t)\|_{L^\infty}\\
&\lesssim& \|u(t)\|_{L^\infty}+\|\nabla u(t)\|_{L^\infty}\|\nabla\psi(t)\|_{^\infty}\\
&\lesssim& \|u(t)\|_{L^\infty}+\|\nabla u(t)\|_{L^\infty}e^{\|\nabla u\|_{L^1_tL^\infty}}\\
&\lesssim&\|u(t)\|_{\textnormal{Lip}} e^{\|u\|_{L^1_t\textnormal{Lip}}}.
\end{eqnarray*}
Plugging this estimate into \eqref{bs2} yields to
\begin{eqnarray}\label{para46}
\Big\|\mathcal{R}\big(U^2,\frac{\eta_{q}^1}{x_{2}(\cdot)}\big)\Big\|_{L^1_tB_{\infty,\infty}^{1\over2}}&\le& Ce^{C\|u\|_{L^1_t\textnormal{Lip}}}\|\Delta_q\Omega_0\|_{B_{\infty,\infty}^{1\over2}}.
\end{eqnarray}
Now let us examine the second paraproduct which  is more subtle and requires some special properties of  the axisymmetric vector field  $u.$ First we write by definition
$$
\Big\|T_{U^2}\frac{\eta_{q}^1}{x_{2}}\Big\|_{B_{\infty,\infty}^{1\over2}}\lesssim\sup_{j\in\NN}2^{j\over2}\Big\|S_{j-1}U^2\Delta_{j}\big(\frac{\eta_{q}^1}{x_{2}(t)}\big)\Big\|_{L^\infty}.
$$
At first sight the term $ \frac{\eta_{q}^1}{x_{2}(t)}$ consumes a derivative more than what we require and the best configuration is to carry this derivative to the velocity $U^2.$ To do this we will use the commutation structure between the frequency cut-off operator $\Delta_j$ and the singular multiplication by $\frac{1}{x_2}$.
According to the identity \eqref{eqid}  we write
\begin{eqnarray*}
S_{j-1}U^2(x)\Delta_{j}\big(\frac{\eta_{q}^1(t,x)}{x_{2}(t)}\big)&=&S_{j-1}U^2(x)\Delta_{j}\big(\frac{\Delta_q\Omega_0^1}{x_2}\big)\\
&=&\frac{S_{j-1}U^2(x)}{x_2}\, {\Delta_{j}\big({\Delta_q\Omega_0^1}}\big)+{S_{j-1}U^2(x)}{}\Big[\Delta_{j}, \frac{1}{x_2}\Big](\Delta_q\Omega_0^1)\\
&:=&{\hbox{I}}_{j}(t,x)+{\hbox{II}}_{j}(t,x).
\end{eqnarray*}
The estimate of the first term can be done as follows
\begin{eqnarray*}
\|{\hbox{I}}_{j}(t)\|_{L^\infty}
&\lesssim&\Big\|\frac{S_{j-1}U^2(t,x)}{x_2}\Big\|_{L^\infty}\|\Delta_{j}\Delta_q\Omega_0^1\|_{L^\infty}.
\end{eqnarray*}
Using Proposition \ref{mim01}-$(4)$ we get $S_{j-1}U^2(x_1,0,x_3)=0$ and thus by Taylor's formula and \eqref{lz1} we obtain
\begin{eqnarray}\label{rz1}
\nonumber\Big\|\frac{S_{j-1}U^2(t,x)}{x_2}\Big\|_{L^\infty}&\lesssim&\|\nabla S_{j-1}U^2\|_{L^\infty}\\
\nonumber&\lesssim&\|\nabla U^2\|_{L^\infty}\\
&\lesssim&\|\nabla u\|_{L^\infty}e^{C\|\nabla u\|_{L^1_tL^\infty}}.
\end{eqnarray}
Since $\Delta_{j}\Delta_q\omega_0=0$ for $|j-q|\geq2$ then we deduce 
\begin{equation}\label{bs3}
\sup_{j}2^{j\over2}\|\hbox{I}_{j}(t)\|_{L^\infty}\le C\|\nabla u(t)\|_{L^\infty}e^{C\|\nabla u\|_{L^1_tL^\infty}}\|\Delta_q\Omega_0\|_{B_{\infty,\infty}^{1\over2}}.
\end{equation}
For the commutator term $\hbox{II}_{j}$ we write by definition
\begin{eqnarray*}
\hbox{II}_{j}(t,x)&=&\frac{S_{j-1} U^2(t,x)}{x_{2}}\,\, 2^{3j}\int_{\RR^3}h(2^j(x-y))(x_{2}-y_{2})\frac{\Delta_q\Omega_0^1(y)}{y_{2}}dy\\
&=&2^{-j}\frac{S_{j-1}U^2(x)}{x_{2}}\, 2^{3j}\tilde{h}(2^j\cdot)\star(\frac{\Delta_q\Omega_0^1}{y_{2}})(x)
\end{eqnarray*}
with $\tilde{h}(x)=x_{2}h(x).$ Now we claim that for every $f\in\mathcal{S}'$ we have

$$
2^{3j}\tilde{h}(2^j\cdot)\star f=\sum_{|j-k|\leq 1}2^{3j}\tilde{h}(2^j\cdot)\star\Delta_{k}f.
$$
Indeed, we have  $\widehat{\tilde h}(\xi)=i\partial_{\xi_{2}}\widehat{h}(\xi)=i\partial_{\xi_{2}}\varphi(\xi).$ It follows that $supp\, \widehat{\tilde h}\subset supp\, \varphi.$ So we get $2^{3j}\tilde{h}(2^j\cdot)\star\Delta_{k}f=0,$ for $|j-k|\geq 2.$
This leads in view of Young inequality, \eqref{rz1}  and \eqref{log1}  to
\begin{eqnarray}\label{bs4}
\nonumber\sup_{j\in\NN}2^{j\over2}\|\hbox{II}_{j}\|_{L^\infty}&\lesssim&\sup_{j\in\NN}\Big\|\frac{S_{j-1}U^2}{x_{2}}\Big\|_{L^\infty}\sum_{|j-k|\leq 1}2^{-\frac{k}{2}}\|\Delta_{k}(\frac{\Delta_q\Omega_0^1}{x_{2}})\|_{L^\infty}\\
\nonumber&\lesssim& \|\nabla U\|_{L^\infty}\Big\|\frac{\Delta_q\Omega_0^1}{x_{2}}\Big\|_{B_{\infty,\infty}^{-\frac12}}\\
&\lesssim&\|\nabla u(t)\|_{L^\infty}e^{\|\nabla u\|_{L^1_tL^\infty}}\|\Delta_q\Omega_0\|_{B_{\infty,\infty}^{\frac12}}.
\end{eqnarray}
Putting together \eqref{bs3} and \eqref{bs4} yields to
\begin{equation}\label{para47}
\Big\|T_{U^2}\frac{\eta_{q}^1}{x_{2}(\cdot)}\Big\|_{L^1_tB_{\infty,\infty}^{1\over2}}\le Ce^{C\|\nabla u\|_{L^1_tL^\infty}}\|\Delta_q\Omega_0^1\|_{B_{\infty,\infty}^{\frac12}}.
\end{equation}
Finally, combining  \eqref{para45},\eqref{para46} and \eqref{para47} we get 
$$
\Big\|U^2\frac{\eta_{q}^1}{x_{2}(\cdot)}\Big\|_{L^1_tB_{\infty,\infty}^{1\over2}}\lesssim Ce^{C\|u\|_{L^1_t\textnormal{Lip}}}\|\Delta_q\Omega_0^1\|_{B_{\infty,\infty}^{1\over2}}.
$$
Inserting this estimate into \eqref{Eqid12} we obtain
\begin{equation}\label{para50}
\|\eta_q^1(t)\|_{B_{\infty,\infty}^{1\over2}}\le Ce^{C\|u\|_{L^1_t\textnormal{Lip}}}\|\Delta_q\Omega_0^1\|_{B_{\infty,\infty}^{1\over2}}.
\end{equation}
Next, we shall  show how to derive the estimate \eqref{korr1} from the preceding one. First, we observe that
$$
e^{F(t,x)}\tilde\Omega_q(t,x)=\eta_q(t,\psi^{-1}(t,x)).
$$
This will be combined with the following  classical composition law 
$$
\big\|f\circ\psi^{-1}\big\|_{B_{\infty,\infty}^{1\over2}}\le C\|f\|_{B_{\infty,\infty}^{1\over2}}\big(1+ \|\nabla\psi^{-1}\|_{L^\infty}^{\frac12}\big)
$$ 
leading  
acording to \eqref{Eqz33} and \eqref{para50} to
\begin{eqnarray*}
\nonumber\|\eta_q^1(t,\psi^{-1}(t,\cdot))\|_{B_{\infty,\infty}^{1\over2}}&\le& C\|\eta_q^1\|_{B_{\infty,\infty}^{1\over2}}e^{C\|\nabla u\|_{L^1_tL^\infty}}\\
&\le&C\|\Delta_q\Omega_0^1\|_{B_{\infty,\infty}^{1\over2}} e^{C\|u\|_{L^1_t\textnormal{Lip}}}.
\end{eqnarray*}
This achieves the proof of \eqref{korr1}. According to \eqref{kor}, its local version reads as follows
\begin{equation}\label{div1}
\|\Delta_j(e^{F(t)}\tilde\Omega_q(t))\|_{L^\infty}\le C2^{-\frac12|j-q|} \|\Delta_q\Omega_0\|_{L^\infty}e^{C V_p(t)}.
\end{equation}
On the other hand, coming back to the equation \eqref{kao1}, taking the $L^\infty$-estimate
 and using Gronwall's inequality  we get 
  \begin{eqnarray*}
\nonumber\|e^{F(t)}\tilde\Omega_q(t)\|_{L^\infty}&=&\|\eta_q(t)\|_{L^\infty}\\
&\le&  \|\Delta_q\Omega_0\|_{L^\infty}e^{\int_0^t\|(U^2(\tau)/x_2(\tau)\|_{L^\infty}d\tau}.
\end{eqnarray*}
Now, since $\psi$ is an homeomorphism and $u^2/x_2=u^r/r$ then
$$
\|(U^2(\tau)/x_2(\tau)\|_{L^\infty}=\|(u^r/r)(\tau)\|_{L^\infty}
$$
and thus we obtain
 \begin{eqnarray}\label{div3}
\|e^{F(t)}\tilde\Omega_q(t)\|_{L^\infty}
&\le&  \|\Delta_q\Omega_0\|_{L^\infty}e^{\|\frac{u^r}{r}\|_{L^1_tL^\infty}}.
\end{eqnarray}
Set $\zeta(t,x)=e^{F(t,x)}\Omega(t,x),$ then by the  linearity of the  problem \eqref{td1} we get the decomposition
$$
\zeta(t,x)=\sum_{q\geq-1}e^{F(t,x)}\tilde\Omega_q(t,x).
$$
Let $N\in \NN$ be an integer that will be fixed later, then using \eqref{div1} and \eqref{div3} we find
\begin{eqnarray*}
\|\zeta(t)\|_{B_{\infty,1}^0}&\leq&\sum_{j,q\geq-1}\|\Delta_j\big(e^{F(t)}\tilde\Omega_q(t)\big)\|_{L^\infty}\\
&\le&
\sum_{|j-q|\le N}\|\Delta_j\big(e^{F(t)}\tilde\Omega_q(t)\big)\|_{L^\infty}+\sum_{|j-q|> N}\|\Delta_j\big(e^{F(t)}\tilde\Omega_q\big)(t)\|_{L^\infty}\\
&\lesssim&N\|\Omega_0\|_{B_{\infty,1}^0}e^{\|u^r/r\|_{L^1_tL^\infty}}+2^{-\frac12N}\|\Omega_0\|_{B_{\infty,1}^0}e^{CV_p(t)}.
\end{eqnarray*}
Now we choose $N\approx V_p(t)$ then
\begin{equation}\label{div4}
\|\zeta(t)\|_{B_{\infty,1}^0}\le C\|\Omega_0\|_{B_{\infty,1}^0}e^{\|u^r/r\|_{L^1_tL^\infty}}\Big(1+\|\diver{ u}\|_{ L^1_tB_{p,1}^{\frac3p}}+\int_0^t\|u(\tau)\|_{\textnormal{Lip}}d\tau\Big).
\end{equation}
Our next object is  to obtain  an estimate for $\|\Omega(t)\|_{B_{\infty,1}^0}$ form $\zeta$. For this purpose  we develop similar calculous to \eqref{tao2} and \eqref{tao1}. We start with the obvious identity 
$$
\Omega(t,x)=\zeta(t,x)+(e^{-F(t,x)}-1)\zeta(t,x),
$$
Using the law products: for $p\in[1,\infty[$
$$
\|uv\|_{B_{\infty,1}^0}\le C\|u\|_{B_{\infty,1}^0}\|v\|_{B_{p,1}^{\frac3p}}, 
$$
we obtain
$$
\|\Omega(t)\|_{B_{\infty,1}^0}\lesssim\|\zeta(t)\|_{B_{\infty,1}^0}\Big( 1+\|e^{-F(t)}-1\|_{B_{p,1}^{\frac3p}} \Big).
$$
Similarly to \eqref{kao45} we get  for $p\in]3,+\infty[$
\begin{eqnarray*}
\|e^{-F(t)}-1\|_{B_{p,1}^{\frac3p}}&\le&C  e^{C\|\nabla u\|_{L^1_tL^\infty}}\int_0^t\|\diver{ u(\tau)}\|_{ B_{p,1}^{\frac3p}}d\tau.
\end{eqnarray*}
Combining these estimates with \eqref{div4}  we find
\begin{eqnarray*}
\|\Omega(t)\|_{B_{\infty,1}^0}&\lesssim&\|\zeta(t)\|_{B_{\infty,1}^0}\Big(1+ e^{C\|\nabla u\|_{L^1_tL^\infty}}\int_0^t\|\diver{ u(\tau)}\|_{ B_{p,1}^{\frac3p}}d\tau\Big)\\
&\le& C\|\Omega_0\|_{B_{\infty,1}^0}e^{\|u^r/r\|_{L^1_tL^\infty}}\Big(1+e^{C\|\nabla u\|_{L^1_tL^\infty}}\|\diver{ u}\|_{ L^1_tB_{p,1}^{\frac3p}}^2\Big)\Big(1+\int_0^t\|u(\tau)\|_{\textnormal{Lip}}d\tau\Big).
\end{eqnarray*}
This achieves the proof of Theorem \ref{Thm89}.

\end{proof}
\subsection{Lower bound of $T_\varepsilon$}
The object of this section is to give   the proof of Theorem \ref{thm111} but for the sake of the conciseness    we will restrict the attention only to   the lower bound of $T_\varepsilon$. At the same time we will derive the involved   Strichartz estimates which form the cornerstone for the proof of  the low Mach number limit and to avoid the redundancy we will omit the proof of this latter point which  can be done analogously to the subcritical case.
\begin{proof} 
Using Lemma \ref{lem22}  we get
$$
\|\vepsilon(t)\|_{\textnormal{Lip}}\lesssim\|\vepsilon(t)\|_{L^2}+\|\divergence\vepsilon(t)\|_{B_{\infty,1}^0}+\|\Omegaepsilon(t)\|_{B_{\infty,1}^0}.
$$
Integrating in time, using Proposition \ref{energy1} and Proposition \ref{stt1} we find
\begin{eqnarray*}
\|\vepsilon\|_{L^1_T\textnormal{Lip}}&\lesssim& \|\vepsilon\|_{L^1_TL^2}+\|\divergence\vepsilon\|_{L^1_TB_{\infty,1}^0}+\|\Omegaepsilon\|_{L^1_TB_{\infty,1}^0}\\
&\le&C_0(1+T)\,e^{\|\divergence\vepsilon\|_{L^1_tB_{\infty,1}^0}}+\|\Omegaepsilon\|_{L^1_TB_{\infty,1}^0}.
\end{eqnarray*}
Let
$$
V_\varepsilon(T):=\|\vepsilon\|_{L^1_T\textnormal{Lip}}+\|\nabla\cepsilon\|_{L^1_TL^\infty}$$
then using again Proposition \ref{stt1} we obtain
\begin{eqnarray}\label{hmid1}
 V_\varepsilon(T)
&\le&C_0(1+T)e^{\|(\divergence\vepsilon,\nabla\cepsilon)\|_{L^1_tB_{\infty,1}^0}} +\|\Omegaepsilon\|_{L^1_TB_{\infty,1}^0}.
\end{eqnarray}
Now, according to  Theorem \ref{Thm89} we have
\begin{equation}\label{tita13}
\|\Omegaepsilon(t)\|_{B_{\infty,1}^0}\le C_0e^{\|\vepsilon^r/r\|_{L^1_tL^\infty}}\Big(1+e^{CV_\varepsilon(T)}\|\diver{ \vepsilon}\|_{ L^1_tB_{p,1}^{\frac3p}}^2\big)\Big(1+\int_0^t\|\vepsilon(\tau)\|_{\textnormal{Lip}}d\tau\Big).
\end{equation}
By virtue of Proposition \ref{vort12} and Proposition \ref{energy1},   we get
\begin{eqnarray}\label{tita12}
\nonumber\|\vepsilon^r/r\|_{L^1_tL^\infty}&\le&C\|\vepsilon\|_{L^1_TL^2}+\|\divergence\vepsilon\|_{L^1_TB_{\infty,1}^0}+C_0 Te^{\|\divergence\vepsilon\|_{L^1_TB_{\infty,1}^0}}\\
&\le&C_0(1+T)\,e^{C\|\divergence\vepsilon\|_{L^1_TB_{\infty,1}^0}}.
\end{eqnarray}
In the inequality \eqref{tita13} we need to estimate $\|\diver{ \vepsilon}\|_{ L^1_tB_{p,1}^{\frac3p}}$. For this purpose we will interpolate this norm between the energy estimate and the dispersive one. More precisely, for $p\in]2,+\infty[$ we get by interpolation
$$
\|\diver{ \vepsilon}\|_{ B_{p,1}^{\frac3p}}\le C\|\diver{ \vepsilon}\|_{ B_{2,1}^{\frac32}}^{\frac2p}\|\diver{ \vepsilon}\|_{ B_{\infty,1}^{0}}^{1-\frac2p}.
$$
Integrating in time  and combining H\"{o}lder inequality with Proposition \ref{energy1} we find
\begin{eqnarray}\label{tita114}
\nonumber\|\diver{ \vepsilon}\|_{ L^1_TB_{p,1}^{\frac3p}}&\le& CT^{\frac2p}\|\diver{ \vepsilon}\|_{ L^\infty_TB_{2,1}^{\frac32}}^{\frac2p}\|\diver{ \vepsilon}\|_{ L^1_TB_{\infty,1}^{0}}^{1-\frac2p}\\
&\le& C_0T^{\frac2p}\,e^{CV_\varepsilon(T)} \|\diver{ \vepsilon}\|_{ L^1_TB_{\infty,1}^{0}}^{1-\frac2p}. \end{eqnarray}
Putting together \eqref{tita13}, \eqref{tita12} and \eqref{tita114} and taking $p=4$ yield to
\begin{eqnarray*}
\|\Omegaepsilon(t)\|_{B_{\infty,1}^0}&\le& C_0 \exp\big\{C_0(1+T)\,e^{C\|\divergence\vepsilon\|_{L^1_TB_{\infty,1}^0}}\big\}\Big(1+  e^{CV_\varepsilon(T)}\|\divergence\vepsilon\|_{L^1_TB_{\infty,1}^0} \Big)\big(1+V_\varepsilon(T)\big)\\
&\le& C_0 \exp\big\{C_0(1+T)\,e^{Ce^{CV_\varepsilon(T)}\,\|\divergence\vepsilon\|_{L^1_TB_{\infty,1}^0}}\big\}\big(1+V_\varepsilon(T)\big).
\end{eqnarray*}
Combining this estimate with \eqref{hmid1} and applying Gronwall's inequality we obtain
\begin{equation}\label{cris}
V_\varepsilon(T)\le C_0 e^{\exp\big\{C_0(1+T)\,e^{Ce^{CV_\varepsilon(T)}\|(\divergence\vepsilon,\nabla\cepsilon)\|_{L^1_TB_{\infty,1}^0}}\big\}}.
\end{equation}
Let $N\in\NN^*$ be an integer that will be fixed later.  Using the dyadic decomposition combined with Bernstein inequality and the property $(1)$ in the Definition \ref{heter}  we get
\begin{eqnarray*}
\|\divergence\vepsilon\|_{B_{\infty,1}^0}&=&\|\divergence\mathcal{Q}\vepsilon\|_{B_{\infty,1}^0}\\
&=&\sum_{q<N}\|\Delta_q\divergence\mathcal{Q}\vepsilon\|_{L^\infty}+\sum_{q\geq N}\|\Delta_q\divergence\mathcal{Q}\vepsilon\|_{L^\infty}\\
&\lesssim&\sum_{q<N}2^q\|\Delta_q\mathcal{Q}\vepsilon\|_{L^\infty}+\frac{1}{\Psi(N)}\sum_{q\geq N}2^{\frac52 q}\Psi(q)\|\Delta_q\mathcal{Q}\vepsilon\|_{L^2}\\
&\lesssim& 2^N\|\mathcal{Q}\vepsilon\|_{L^\infty}+\frac{1}{\Psi(N)}\|\vepsilon\|_{B_{2,1}^{\frac52,\Psi}}.
\end{eqnarray*}
Integrating in time yields to
$$
\|\divergence\vepsilon\|_{L^1_TB_{\infty,1}^0}\lesssim 2^N\|\mathcal{Q}\vepsilon\|_{L^1_TL^\infty}+\frac{1}{\Psi(N)}\|\vepsilon\|_{L^1_TB_{2,1}^{\frac52,\Psi}}.
$$
Using Proposition \ref{energy1}, Corollary \ref{cor11} and  H\"{o}lder inequalities  we get for $r>2,$ 
\begin{eqnarray*}
\|\divergence\vepsilon\|_{L^1_TB_{\infty,1}^0}&\le& C_0(1+T^{2-{1\over r}})e^{CV_\varepsilon(T)}\Big(2^N\varepsilon^{\frac1r}+\frac{1}{\Psi(N)}\Big)\\
&\le& C_0(1+T^{2})e^{CV_\varepsilon(T)}\Big(2^N\varepsilon^{\frac1r}+\frac{1}{\Psi(N)}\Big).
\end{eqnarray*}
Similarly we get
$$
\|\nabla\cepsilon\|_{L^1_TB_{\infty,1}^0}\le C_0(1+T^2)e^{CV_\varepsilon(T)}\Big(2^N\varepsilon^{\frac1r}+\frac{1}{\Psi(N)}\Big).
$$
Thus 
$$
\|(\divergence\vepsilon,\nabla\cepsilon)\|_{L^1_TB_{\infty,1}^0}\le C_0(1+T^2)e^{CV_\varepsilon(T)}\Big(2^N\varepsilon^{\frac1r}+\frac{1}{\Psi(N)}\Big).
$$
We take $N$ such that
$$
e^{N}\approx \varepsilon^{-\frac{1}{2r}}
$$
which leads for small $\varepsilon$  to 
\begin{equation*}
\|(\divergence\vepsilon,\nabla\cepsilon)\|_{L^1_TB_{\infty,1}^0}\le C_0(1+T^2)e^{CV_\varepsilon(T)}\Big(\varepsilon^{\frac{1}{2r}}+\frac{1}{\Psi\big(\log\big(\frac{1}{\varepsilon^{\frac{1}{2r}}}\big)}\Big).
\end{equation*}
To make simple notations we introduce the function 
$$
{\Phi}(x)=x^{\frac{1}{2r}}+\frac{1}{\Psi\big(\log\big(\frac{1}{x^{\frac{1}{2r}}}\big)},\quad  x\in]0,1[.
$$
We observe that since  $\Psi \in \mathcal{U}_\infty$ and it  satisfies the property $(ii)$ of the Definition \ref{heter} then 
\begin{equation}\label{trasq}
\lim_{\varepsilon\to 0}\Phi(\varepsilon)=0.
\end{equation}
Thus we can rewrite the preceding estimate under the form
\begin{equation}\label{zz1}
\|(\divergence\vepsilon,\nabla\cepsilon)\|_{L^1_TB_{\infty,1}^0}\le C_0(1+T^2)e^{CV_\varepsilon(T)}\Phi(\varepsilon).
\end{equation}
 This gives 
\begin{equation*}
e^{CV_\varepsilon(T)}\|(\divergence\vepsilon,\nabla\cepsilon)\|_{L^1_TB_{\infty,1}^0}\le C_0(1+T^2)e^{CV_\varepsilon(T)}\Phi(\varepsilon).\end{equation*}
Plugging  this estimate into \eqref{cris} yields to
\begin{equation}\label{zizq}
CV_\varepsilon(T)\le  e^{e^{C_0(1+T)\exp\{C_0(1+T^2)  \Phi(\varepsilon)\,e^{CV_\varepsilon(T)}
\}}}.
\end{equation}

We choose $T_\varepsilon$ such that
\begin{equation}\label{c1z}
{e^{e^{\exp\{2C_0(1+T_\varepsilon)\}}}}=\Phi^{-\frac12}(\varepsilon).
\end{equation}
Then we claim that for small $\varepsilon$ and for every $t\in[0,T_\varepsilon]$ 
\begin{equation}\label{toutz1}
e^{CV_\varepsilon(t)}\le \Phi^{-\frac12}(\varepsilon).
\end{equation}
Indeed, we set 
$$
I_{T_\varepsilon}:=\Big\{t\in[0,T_\varepsilon]; e^{CV_\varepsilon(t)}\le  \Phi^{-\frac12}(\varepsilon\Big\}.
$$ 
First this set is nonempty since $0\in I_{T_\varepsilon}$. By the continuity of $t\mapsto V_\varepsilon(t)$, the set  $I_{T_\varepsilon}$ is closed and thus to prove that $I_{T_\varepsilon}$ coincides with $[0,T_\varepsilon]$ it suffices to show that $I_{T_\varepsilon}$ is an open set. Let $t\in I_{T_\varepsilon}$ then using \eqref{zizq}  we get for small $\varepsilon$
\begin{eqnarray}\label{hmd11}
e^{CV_\varepsilon(t)}&\le&e^{e^{\exp\{C_0(1+T)\exp\{C_0(1+T^2)  \Phi^{\frac12}(\varepsilon)\}\}}}.
\end{eqnarray}
From \eqref{c1z} we get for small values of $\varepsilon$, 
 \begin{eqnarray}\label{lowb}
T_\varepsilon&\approx& c \log\log\log \Phi^{-\frac12}(\varepsilon)
\end{eqnarray}
for some constant $c$ and thus  
$$
\lim_{\varepsilon\to 0}(1+T_\varepsilon^2)  \Phi^{\frac12}(\varepsilon)=\lim_{\varepsilon\to 0} \Phi^{\frac12}(\varepsilon)\log^2\log\log(\Phi^{-\frac12}(\varepsilon))=0.
$$
Therefore for small $\varepsilon$ and for every $t\in[0,T_\varepsilon]$ we have $e^{C_0(1+t^2)  \Phi^{\frac12}(\varepsilon)}<2$ and consequently from \eqref{hmd11} and \eqref{c1z}  we find
\begin{eqnarray*}
e^{CV_\varepsilon(t)}&<&e^{e^{\exp\{2C_0(1+T)\}}}\\
&<& \Phi^{-\frac12}(\varepsilon).
\end{eqnarray*}

This proves that $t$ is in the interior of $I_{T_\varepsilon}$ and thus $I_{T_\varepsilon}$ is an open set of $[0,T_\varepsilon]$. Consequently we conclude that $I_{T_\varepsilon}=[0,T_\varepsilon].$

Now inserting \eqref{toutz1} into  \eqref{zizq} we  get  for $T\in[0,T_\varepsilon]$
\begin{equation}\label{Lips}
V_\varepsilon(T)\le C_0e^{{\exp{C_0 T}}}. 
\end{equation}
Plugging \eqref{toutz1} into the estimate  \eqref{zz1} we obtain for $T\in[0,T_\varepsilon]$ and for small $\varepsilon$
\begin{eqnarray}\label{disp1}
\nonumber\|\divergence\vepsilon\|_{L^1_TB_{\infty,1}^0}+\|\nabla\cepsilon\|_{L^1_TB_{\infty,1}^0}&\le& C_0 (1+T^2)\Phi^{\frac12}(\varepsilon)\\
\nonumber&\le&C_0\Phi^{\frac13}(\varepsilon).
\end{eqnarray}
From Corollary \ref{cor11} and the above estimates  we obtain for every $r\in]2,+\infty[$ and for small values of $\varepsilon$
\begin{eqnarray*}
\nonumber\|\mathcal{Q}\vepsilon\|_{L^r_TL^\infty}+\|\cepsilon\|_{L^r_TL^\infty}&\le&C_0 \varepsilon^{\frac1r}(1+T)e^{C V_\varepsilon(t)} \\
\nonumber&\le& C_0 \varepsilon^{\frac1r}\Phi^{-1}(\varepsilon)\\
&\le& C_0\varepsilon^{\frac{1}{2r}}.
\end{eqnarray*}
We point out that the use of H\"{o}lder inequality and the slow growth of $T$ allow us to get the following: for every $r\in[1,+\infty[$
\begin{eqnarray}\label{esp1}
\|\mathcal{Q}\vepsilon\|_{L^r_TL^\infty}+\|\cepsilon\|_{L^r_TL^\infty}
&\le&C_0\varepsilon^{\sigma'},\quad\sigma^\prime>0.
\end{eqnarray}
Inserting \eqref{disp1} into \eqref{tour1} leads to
$$\|\Omegaepsilon(T)\|_{L^\infty}\le C_0 e^{C_0T}.
$$
To  estimate the solutions of \eqref{eqs:3} in Sobolev norms we use Proposition \ref{energy1} combined with the Lipschitz bound \eqref{Lips}
$$
\|(\vepsilon,\cepsilon)(T)\|_{B_{2,1}^{\frac52,\Psi}}\le C_0 e^{e^{\exp\{C_0T\}}}.
$$

\end{proof}

\section{Appendix}
This last section is devoted to the proof of some lemmata used in the preceding sections.
\begin{lem}\label{lem2}
Let $s>\frac52$ and $\varphi\in L^\infty([0,T]; H^s(\RR^3))$, then we have
$$
\|\nabla\varphi\|_{L^1_TB_{\infty,1}^0}\le CT^{1-\frac{2s-5}{r(2s-3)}}\|\varphi\|_{L^r_TL^\infty}^{\frac{2s-5}{2s-3}}\|\varphi\|_{L^\infty_TH^s}^{\frac{2}{2s-3}}.
$$

\end{lem}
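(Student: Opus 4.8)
The plan is to establish a pointwise-in-time Gagliardo--Nirenberg interpolation inequality and then integrate it against a H\"older inequality in the time variable. First I would reduce the $B_{\infty,1}^0$ norm of $\nabla\varphi$ to a weighted sum over dyadic blocks: by the first Bernstein inequality of Lemma \ref{lb} one has $\|\Delta_q\nabla\varphi\|_{L^\infty}\lesssim 2^q\|\Delta_q\varphi\|_{L^\infty}$, so that
$$
\|\nabla\varphi\|_{B_{\infty,1}^0}\lesssim\sum_{q\ge-1}2^q\|\Delta_q\varphi\|_{L^\infty}.
$$
I would then fix an integer $N$ and split this series at the frequency $2^N$.

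For the low frequencies $q<N$ I simply use $\|\Delta_q\varphi\|_{L^\infty}\lesssim\|\varphi\|_{L^\infty}$ and sum the geometric factor $2^q$, obtaining a contribution $\lesssim 2^N\|\varphi\|_{L^\infty}$. For the high frequencies $q\ge N$ I pass to $L^2$ through the Bernstein embedding $\|\Delta_q\varphi\|_{L^\infty}\lesssim 2^{\frac32 q}\|\Delta_q\varphi\|_{L^2}$ valid in dimension three, which turns the tail into $\sum_{q\ge N}2^{\frac52 q}\|\Delta_q\varphi\|_{L^2}$. Writing $2^{\frac52 q}=2^{(\frac52-s)q}2^{sq}$ and applying Cauchy--Schwarz against the identity $H^s=B_{2,2}^s$, the tail is bounded by $\big(\sum_{q\ge N}2^{(5-2s)q}\big)^{\frac12}\|\varphi\|_{H^s}$; here the hypothesis $s>\frac52$ is exactly what makes this geometric series converge, and summing it gives a contribution $\lesssim 2^{(\frac52-s)N}\|\varphi\|_{H^s}$.

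At this stage I have, for every $N$, the bound $\|\nabla\varphi\|_{B_{\infty,1}^0}\lesssim 2^N\|\varphi\|_{L^\infty}+2^{(\frac52-s)N}\|\varphi\|_{H^s}$. Optimizing by choosing the integer $N$ so that $2^{(s-\frac32)N}\approx\|\varphi\|_{H^s}/\|\varphi\|_{L^\infty}$ (rounding to the nearest integer costs only a multiplicative constant) balances the two terms and produces the pointwise-in-time estimate
$$
\|\nabla\varphi(t)\|_{B_{\infty,1}^0}\lesssim\|\varphi(t)\|_{L^\infty}^{\frac{2s-5}{2s-3}}\,\|\varphi(t)\|_{H^s}^{\frac{2}{2s-3}},
$$
the exponents following from $1-\tfrac{2}{2s-3}=\tfrac{2s-5}{2s-3}$. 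This interpolation step, together with the convergence of the high-frequency series, is the crux of the argument.

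Finally I would integrate in time. Bounding $\|\varphi(t)\|_{H^s}\le\|\varphi\|_{L^\infty_TH^s}$ and pulling this factor out, it remains to control $\int_0^T\|\varphi(t)\|_{L^\infty}^{\frac{2s-5}{2s-3}}\,dt$. Applying H\"older's inequality in time with the conjugate exponent chosen so that $\tfrac{2s-5}{2s-3}\cdot p'=r$, that is $p'=\tfrac{r(2s-3)}{2s-5}$, converts this integral into $\|\varphi\|_{L^r_TL^\infty}^{\frac{2s-5}{2s-3}}$ times $T^{1/p}=T^{1-\frac{2s-5}{r(2s-3)}}$, which is exactly the asserted inequality. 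The only points needing care are the convergence of the tail (ensured by $s>\frac52$) and the requirement $p'\ge1$ for the H\"older step, which holds since $\tfrac{2s-5}{2s-3}<1\le r$; there is no genuine obstacle beyond the bookkeeping of the exponents.
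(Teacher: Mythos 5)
Your proof is correct and follows essentially the same route as the paper: a dyadic splitting of $\nabla\varphi$ at a frequency $2^N$, Bernstein inequalities on both pieces, optimization of $N$ via $2^{N(s-\frac32)}\approx\|\varphi\|_{H^s}/\|\varphi\|_{L^\infty}$ to get the pointwise interpolation bound, then H\"older in time. The only difference is that you spell out the Cauchy--Schwarz step for the high-frequency tail and the exponent bookkeeping for the H\"older step, which the paper leaves implicit.
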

\begin{proof}
To prove this inequality we will use a frequency  interpolation argument. We split the function $\varphi$ into its dyadic blocks: $\varphi=\sum_{q\geq-1}\Delta_q\varphi$ and we take  an integer  $N$ that will be be fixed later. Then by Bernstein inequality we obtain
\begin{eqnarray*}
\|\nabla\varphi\|_{B_{\infty,1}^0}&=& \sum_{q=-1}^{N}\|\Delta_q\nabla\varphi\|_{L^\infty}+\sum_{q>N}\|\Delta_q\nabla\varphi\|_{L^\infty}\\
&\le& C 2^N\|\varphi\|_{L^\infty}+C\sum_{q>N}2^{\frac52 q}\|\Delta_q\varphi\|_{L^2}\\
&\le& C 2^N\|\varphi\|_{L^\infty}+C 2^{-N(s-\frac52)}\|\varphi\|_{H^s}.
\end{eqnarray*}
Now we choose $N$ such that
$$
2^{N(s-\frac32)}\approx\frac{\|\varphi\|_{H^s}}{\|\varphi\|_{L^\infty}},
$$
and this gives 
$$
\|\nabla\varphi\|_{L^\infty}\le C\|\varphi\|_{H^s}^{\frac{2}{2s-3}}\|\varphi\|_{L^\infty}^{\frac{2s-5}{2s-3}}.
$$
Now to get the desired estimate, it suffices to use H\"{o}lder inequality in time variable.
\end{proof}
The next lemma deals with a logarithmic estimate in the compressible context.
\begin{lem}\label{lem22}
The following assertions hold true
\begin{enumerate}
\item Sub-critical case:
Let $v$ be a vector field belonging to $H^s(\RR^3)$ with $s>\frac52$ and denote by $\Omega=\nabla\wedge v$ its vorticity,  then we have 
$$
\|\nabla v\|_{L^\infty}\lesssim \|v\|_{L^2}+\|\diver v\|_{B_{\infty,1}^0}+\|\Omega\|_{L^\infty}\log(e+\|v\|_{H^s}).$$
\item Critical case: Let $v$ be a  vector field belonging to $B_{\infty,1}^{1}\cap L^2$, then
$$
\|\nabla v\|_{L^\infty}\lesssim \|v\|_{L^2}++\|\diver v\|_{B_{\infty,1}^0}+\|\Omega\|_{B_{\infty,1}^0}.
$$
\end{enumerate}
\end{lem}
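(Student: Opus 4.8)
The plan is to reconstruct the full Jacobian $\nabla v$ from the two quantities appearing on the right-hand side, namely $\divergence v$ and $\Omega=\nabla\times v$, and then to estimate the resulting zeroth–order operators on each dyadic shell. The starting point is the elementary identity $\Delta v=\nabla(\divergence v)-\nabla\times\Omega$, obtained from $\nabla\times(\nabla\times v)=\nabla(\divergence v)-\Delta v$. Inverting the Laplacian gives
$$
\nabla v=\nabla\Delta^{-1}\nabla\,\divergence v-\nabla\Delta^{-1}(\nabla\times\Omega),
$$
so that $\nabla v$ is built from $\divergence v$ and $\Omega$ through matrices of Fourier multipliers that are smooth and homogeneous of degree zero (compositions of Riesz transforms). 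These operators are \emph{not} bounded on $L^\infty$, which is exactly the source of the difficulty, but they act uniformly boundedly on every dyadic block $\dot\Delta_q$ (after rescaling, the symbol $m(\xi)\varphi(2^{-q}\xi)$ is a fixed multiplier with integrable kernel), hence continuously on $\dot B^0_{\infty,1}$, as already used elsewhere in the paper.

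For the critical case $(2)$ I would use the embedding $\dot B^0_{\infty,1}\hookrightarrow L^\infty$ to write $\|\nabla v\|_{L^\infty}\le\sum_{q\in\ZZ}\|\dot\Delta_q\nabla v\|_{L^\infty}$ and split the sum at $q=0$. For the low frequencies $q<0$, Bernstein's inequality (Lemma \ref{lb}) converts $L^2$ into $L^\infty$ with a gain of decay, $\|\dot\Delta_q\nabla v\|_{L^\infty}\lesssim 2^{5q/2}\|v\|_{L^2}$, whose sum over $q<0$ is controlled by $\|v\|_{L^2}$. For the high frequencies $q\ge 0$, the boundedness of the order-zero operators on each block together with the identity above gives $\|\dot\Delta_q\nabla v\|_{L^\infty}\lesssim\|\dot\Delta_q\Omega\|_{L^\infty}+\|\dot\Delta_q\divergence v\|_{L^\infty}$, and summing yields $\|\Omega\|_{B^0_{\infty,1}}+\|\divergence v\|_{B^0_{\infty,1}}$. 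This proves $(2)$.

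For the subcritical case $(1)$, the low-frequency part again produces $\|v\|_{L^2}$, but in the high range I insert a cutoff $N\in\NN$. For $0\le q\le N$ I reduce once more to $\dot\Delta_q\Omega$ and $\dot\Delta_q\divergence v$; here I only dispose of $\|\Omega\|_{L^\infty}$, so I bound $\|\dot\Delta_q\Omega\|_{L^\infty}\lesssim\|\Omega\|_{L^\infty}$ and pay a factor $N$, while the divergence contribution is still absorbed by $\|\divergence v\|_{B^0_{\infty,1}}$. For the remaining frequencies $q>N$ I use Bernstein to pass from $L^2$ to $L^\infty$ and then Cauchy–Schwarz against the $H^s$ norm, which produces a tail of order $2^{-(s-\frac52)N}\|v\|_{H^s}$; this is where $s>\frac52$ is used, to make the geometric series converge. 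Optimizing, I choose $N\approx\log(e+\|v\|_{H^s})$, so that $2^{(s-\frac52)N}\approx e+\|v\|_{H^s}$, the tail is of order one, and $N\|\Omega\|_{L^\infty}\lesssim\|\Omega\|_{L^\infty}\log(e+\|v\|_{H^s})$, closing the estimate.

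The main obstacle is conceptual rather than computational: because the Riesz–type operators reconstructing $\nabla v$ fail to be bounded on $L^\infty$, one cannot simply write $\|\nabla v\|_{L^\infty}\lesssim\|\Omega\|_{L^\infty}+\|\divergence v\|_{L^\infty}$. The whole point is to pass through $B^0_{\infty,1}$, where these operators are bounded, in the critical case, and to localize the logarithmic loss to the middle band $0\le q\le N$ in the subcritical case, balancing it against the $H^s$-controlled tail. The only delicate bookkeeping is the choice of $N$ and the absorption of the residual constant coming from the high-frequency tail.
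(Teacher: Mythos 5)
Your proof is correct, and it takes a somewhat different route from the paper's. The paper first performs the Helmholtz splitting $v=\mathcal{P}v+\mathcal{Q}v$, invokes the Brezis-Gallou\"{e}t inequality as a black box for the divergence-free part $\mathcal{P}v$ (whose curl is again $\Omega$), and only estimates the compressible part $\nabla\mathcal{Q}v=\nabla^2\Delta^{-1}\divergence v$ by hand, via Bernstein on the block $\Delta_{-1}$ and the uniform $L^\infty$-boundedness of the zero-order multipliers on the blocks $\Delta_q$, $q\in\NN$. You instead reconstruct the whole gradient at once from the identity $\Delta v=\nabla(\divergence v)-\nabla\times\Omega$ and run the low/middle/high frequency splitting yourself; in the subcritical case this amounts to reproving Brezis-Gallou\"{e}t rather than citing it, with the divergence contribution carried along in the middle band. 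Both arguments rest on the same toolbox (Bernstein at low frequencies using $v\in L^2$, block-wise boundedness of degree-zero homogeneous multipliers, and the logarithmic cutoff $N\approx\log(e+\|v\|_{H^s})$ in the subcritical case); yours is more self-contained and treats cases $(1)$ and $(2)$ in a unified way, while the paper's is shorter because it delegates the incompressible part to a known estimate.

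One caveat, which you half-acknowledge when you mention the ``absorption of the residual constant'': your optimization leaves a term of size $O(1)$ coming from the tail $2^{-(s-\frac52)N}\|v\|_{H^s}$ with $2^{(s-\frac52)N}\approx e+\|v\|_{H^s}$, and this constant cannot in fact be absorbed into the stated right-hand side. Indeed, if the inequality held literally as stated, applying it to $\delta v$ and letting $\delta\to0$ would yield $\|\nabla v\|_{L^\infty}\lesssim\|v\|_{L^2}+\|\divergence v\|_{B_{\infty,1}^0}+\|\Omega\|_{L^\infty}$ for all $v$, which is false (the logarithmic factor is known to be unavoidable, since double Riesz transforms are unbounded on $L^\infty$). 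So what your argument proves is the estimate with an extra additive constant on the right. This is not a defect of your proof relative to the paper: the Brezis-Gallou\"{e}t inequality in the form quoted in the paper's own proof carries exactly the same issue, and in the applications (the Gronwall-type bounds in Proposition \ref{lower} and in Section \ref{critik}) such an additive constant is harmless.
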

\begin{proof}
We split $v$ into compressible and incompressible parts: $v=\mathcal{Q}v+\mathcal{P}v.$ Then $$\hbox{curl} \,v=\hbox{curl}\,\mathcal{P}v.
$$
Thus the incompressible part $\mathcal{P}v$ has the same vorticity as the total velocity and thus we can use Brezis-Gallou\"{e}t logarithmic estimate
\begin{eqnarray*}
\|\nabla\mathcal{P}v\|_{L^\infty}&\lesssim&\|\mathcal{P}v\|_{L^2}+\|\Omega\|_{L^\infty}\log(e+\|\mathcal{P}v\|_{H^s})\\
&\lesssim&\|v\|_{L^2}+\|\Omega\|_{L^\infty}\log(e+\|v\|_{H^s}).
\end{eqnarray*}
It remains to estimate the  Lipschitz norm of the compressible part $\mathcal{Q}v$. Using Bernstein inequality for low frequency
\begin{eqnarray*}
\|\nabla\mathcal{Q}v\|_{L^\infty}&=&\|\nabla^2\Delta^{-1}\divergence v\|_{L^\infty}\\
&\le&\|\Delta_{-1}\nabla^2\Delta^{-1}\divergence v\|_{L^\infty}+\sum_{q\in\NN}\|\Delta_{q}\nabla^2\Delta^{-1}\divergence v\|_{L^\infty}\\
&\lesssim&\|v\|_{L^2}+\sum_{q\in\NN}\|\Delta_{q}\divergence v\|_{L^\infty}\\
&\lesssim&\|v\|_{L^2}+\|\divergence v\|_{B_{\infty,1}^0}.
\end{eqnarray*}
This concludes the proof of the logarithmic estimate.
\end{proof}
We will establish the  following
\begin{lem}\label{proz1} Let $s>0$ and $\Psi\in \mathcal{U}$, see the Definition \ref{heter}.   Then we  have the following commutator estimate: 
$$\sum_{q\geq-1}2^{qs}\Psi(q)\Vert[\Delta_{q},v\cdot\nabla]u\Vert_{L^2}\lesssim\Vert\nabla v\Vert_{L^\infty}\Vert u\Vert_{B^{s,\Psi}_{2,1}}+\Vert\nabla u\Vert_{L^\infty}\Vert v\Vert_{B^{s,\Psi}_{2,1}}.$$
\end{lem}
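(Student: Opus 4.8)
The plan is to run the classical commutator argument based on Bony's decomposition, while keeping track of the extra weight $\Psi(q)$ and exploiting condition $(2)$ of Definition \ref{heter} to transfer this weight between neighbouring dyadic indices. Writing $v\cdot\nabla u=\sum_k v^k\partial_k u$ and decomposing each product as $v^k\partial_k u=T_{v^k}\partial_k u+T_{\partial_k u}v^k+\mathcal{R}(v^k,\partial_k u)$, I would split the commutator (with the sum over $k$ understood) as
\[
[\Delta_q,v\cdot\nabla]u=\underbrace{[\Delta_q,T_{v^k}]\partial_k u}_{A_q}+\underbrace{\big(\Delta_q T_{\partial_k u}v^k-T_{\partial_k\Delta_q u}v^k\big)}_{B_q}+\underbrace{\big(\Delta_q\mathcal{R}(v^k,\partial_k u)-\mathcal{R}(v^k,\partial_k\Delta_q u)\big)}_{C_q}.
\]
The gain of one derivative comes only from $A_q$; all other pieces are bounded crudely.

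For $A_q=\sum_{|q'-q|\le 4}[\Delta_q,S_{q'-1}v^k]\partial_k\Delta_{q'}u$ I would use the convolution representation $\Delta_q f=2^{3q}h(2^q\cdot)\star f$ together with the first-order Taylor bound $|S_{q'-1}v^k(x)-S_{q'-1}v^k(y)|\le\|\nabla v\|_{L^\infty}|x-y|$ and $\||\cdot|\,2^{3q}h(2^q\cdot)\|_{L^1}\lesssim 2^{-q}$, obtaining $\|A_q\|_{L^2}\lesssim\|\nabla v\|_{L^\infty}\sum_{|q'-q|\le4}\|\Delta_{q'}u\|_{L^2}$. The two paraproduct pieces of $B_q$ are treated directly: since $S_{q'-1}(\cdot)$ is uniformly bounded on $L^\infty$ and $\|\partial_k\Delta_q u\|_{L^\infty}=\|\Delta_q\partial_k u\|_{L^\infty}\lesssim\|\nabla u\|_{L^\infty}$, one gets $\|\Delta_q T_{\partial_k u}v^k\|_{L^2}\lesssim\|\nabla u\|_{L^\infty}\sum_{|q'-q|\le4}\|\Delta_{q'}v\|_{L^2}$ and $\|T_{\partial_k\Delta_q u}v^k\|_{L^2}\lesssim\|\nabla u\|_{L^\infty}\sum_{q'\ge q+2}\|\Delta_{q'}v\|_{L^2}$. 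Likewise the remainder pieces obey $\|\Delta_q\mathcal{R}(v^k,\partial_k u)\|_{L^2}\lesssim\|\nabla u\|_{L^\infty}\sum_{q'\ge q-N_0}\|\Delta_{q'}v\|_{L^2}$ and $\|\mathcal{R}(v^k,\partial_k\Delta_q u)\|_{L^2}\lesssim\|\nabla u\|_{L^\infty}\sum_{|q'-q|\le N_0}\|\Delta_{q'}v\|_{L^2}$, for a fixed $N_0$.

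Finally I would multiply by $2^{qs}\Psi(q)$ and sum over $q$. For the diagonal interactions (with $|q'-q|$ bounded) the slow-growth property $\Psi(q+1)/\Psi(q)\le C$ gives $\Psi(q)\le C^{N_0}\Psi(q')$, so that $2^{qs}\Psi(q)\approx 2^{q's}\Psi(q')$ and the sums collapse to $\|u\|_{B^{s,\Psi}_{2,1}}$ or $\|v\|_{B^{s,\Psi}_{2,1}}$. For the one-sided tails ($q'\ge q-N_0$) I would exchange the order of summation and combine $s>0$ with the comparison above, namely $\sum_{q\le q'+N_0}2^{qs}\Psi(q)\le C^{N_0}\Psi(q')\sum_{q\le q'+N_0}2^{qs}\lesssim 2^{q's}\Psi(q')$, which again reconstructs $\|v\|_{B^{s,\Psi}_{2,1}}$. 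Collecting the contributions of $A_q$, $B_q$ and $C_q$ then yields the asserted bound.

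The main obstacle is the correct transfer of the weight $\Psi(q)$ onto the frequency index actually carrying the regularity, and this is precisely what condition $(2)$ of Definition \ref{heter} is designed for: without an a priori comparison between $\Psi(q)$ and $\Psi(q')$ at neighbouring indices the diagonal sums would not close, while in the one-sided remainder term this comparison must be combined with the hypothesis $s>0$ to make the geometric tail summable. The borderline case $s\le 0$ genuinely fails at this step.
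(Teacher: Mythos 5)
Your proposal is correct and follows essentially the same route as the paper: the identical Bony splitting into the paraproduct commutator (which alone yields the derivative gain via the kernel/mean-value estimate), the reversed paraproduct, and the remainder, all bounded crudely with $\Vert\nabla u\Vert_{L^\infty}$ against dyadic blocks of $v$, then summed with the weight transferred through the doubling property of $\Psi$ on diagonal interactions and through monotonicity combined with $s>0$ on the one-sided tails. The only cosmetic differences are that the paper writes your $B_q$ and $C_q$ as commutator brackets $[\Delta_q,T_{\nabla\cdot}\cdot v]u$ and $[\Delta_q,\mathcal{R}(v,\nabla)]u$ and cites the standard commutator inequality rather than reproving it.
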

\begin{proof}
By using Bony's decomposition, we can split the commutator into three parts,
\begin{eqnarray*}
[\Delta_{q},v\cdot\nabla]u&=&[\Delta_{q},T_{v}\cdot\nabla]u+[\Delta_{q},T_{\nabla\cdot}\cdot v]u+[\Delta_{q},\mathcal{R}(v,\nabla)]u\\
&=&\textnormal{I}_q+\textnormal{II}_q+\textnormal{III}_q.
\end{eqnarray*}
We start with the estimate of the first term $\textnormal{I}.$ Then by definition
$$\Vert[\Delta_{q},T_{v}\cdot\nabla]u\Vert_{L^2}\le\sum_{\vert j-q\vert\le 4}\Vert[\Delta_{q},S_{j-1}v\cdot\nabla]\Delta_{j}u\Vert_{L^2}.$$
We can now use the following commutator inequality, for a proof see for example \cite{MR2000a:76030},
$$
\|[\Delta_q,a]b\|_{L^p}\le C2^{-q}\|\nabla a\|_{L^\infty}\|a\|_{L^p}
$$
which yields in view of Bernstein inequality  to 
\begin{eqnarray*}
\Vert[\Delta_{q},T_{v}\cdot\nabla]u\Vert_{L^2}&\le&C\sum_{\vert j-q\vert\le 4}2^{-q}\Vert\nabla S_{j-1}v\Vert_{L^\infty}\Vert\nabla\Delta_{j}u\Vert_{L^2}\\
&\lesssim& C\Vert\nabla v\Vert_{L^\infty}\sum_{\vert j-q\vert\le 4}2^{j-q}\Vert\Delta_{j}u\Vert_{L^2}\\
&\lesssim& \Vert\nabla v\Vert_{L^\infty}\sum_{\vert j-q\vert\le 4}\Vert\Delta_{j}u\Vert_{L^2}.
\end{eqnarray*}
Multiplying this last inequality by $2^{qs}\Psi(q)$, summing up over $q$, and using the property $(2)$ in the Definition \ref{heter} we get 
\begin{eqnarray*}
\sum_{q\geq -1}2^{qs}\Psi(q)\|\textnormal{I}_q\|_{L^2}&\lesssim&\Vert\nabla v\Vert_{L^\infty}\sum_{\vert j-q\vert\le 4}\frac{\Psi(q)}{\Psi(j)} \big(2^{js}\Psi(j)\Vert\Delta_{j}u\Vert_{L^2}\big)\\
&\lesssim&
\Vert\nabla v\Vert_{L^\infty}\Vert u\Vert_{B^{s,\Psi}_{2,1}}.
\end{eqnarray*}
Similarly, the second term $\textnormal{II}_q$ is estimated by
\begin{equation*}
\sum_{q\geq-1}2^{qs}\Psi(q)\|\textnormal{II}_q\|_{L^2}\lesssim\Vert\nabla u\Vert_{L^\infty}\Vert v\Vert_{B^{s,\Psi}_{2,1}}.
\end{equation*}
What is left  is to estimate the remainder term. From the definition and since the Fourier transform of $[\Delta_{q},\Delta_{j}v]\nabla$ is supported in a ball of radius $2^q$ then we obtain
$$\Vert[\Delta_{q},\mathcal{R}(v,\nabla)]u\Vert_{L^2}\le\sum_{j\ge q-4}\Vert[\Delta_{q},\Delta_{j}v]\nabla\widetilde{\Delta}_{j} u\Vert_{L^2}.$$
To estimate the term inside the sum we do not need to use the structure of the commutator. Applying  H\"older and Bernstein inequalities yields to
\begin{eqnarray*}
\Vert[\Delta_{q},\mathcal{R}(v,\nabla)]u\Vert_{L^2}&\lesssim& \sum_{j\ge q-4} \Vert\Delta_{j}v\Vert_{L^2}\Vert\nabla\widetilde{\Delta}_{j} u\Vert_{L^\infty}\\
&\lesssim& \Vert\nabla u\Vert_{L^\infty}\sum_{j\ge q-4}\Vert\Delta_{j}v\Vert_{L^2}.
\end{eqnarray*}
Multiplying this last inequality by $2^{qs}\Psi(q)$, summing up over $q$, using Fubini identity and the property $(1)$ of the Definition \ref{heter} we get
\begin{eqnarray*}
\sum_{q\geq-1}2^{qs}\Psi(q)\Vert[\Delta_{q},\mathcal{R}(v,\nabla)]u\Vert_{L^2}&\lesssim& \Vert\nabla u\Vert_{L^\infty}\sum_{q\geq-1}\sum_{j\ge q-4}2^{qs}\Psi(q)\|\Delta_j u\|_{L^2}\\
&\lesssim& \Vert\nabla u\Vert_{L^\infty}\sum_{j\geq-1}\|\Delta_j u\|_{L^2}\sum_{q\leq j+4}2^{qs}\Psi(q)\\
&\lesssim&\Vert\nabla u\Vert_{L^\infty}\|u\|_{B_{2,1}^{s,\Psi}}.
\end{eqnarray*}
This concludes the proof of Lemma \ref{proz1}.
\end{proof}

The following  proposition describes the propagation of Besov regularity for transport equation.   
 \begin{prop}\label{Lems2}
Let  $u$ be a smooth  vector field, not necessary of zero divergence. Let $f$ be a  smooth solution of the transport equation 
 $$
\partial_{t}f+u\cdot\nabla f=g,\, f_{|t=0}=f_0,
$$
such that $f_0\in B_{p,r}^s(\mathbb R^3)$ and 
$g\in{L^1_{\textnormal{loc}}}(\mathbb R_{+};B_{p,r}^{s}).  $     
{Then the following assertions hold true}
\begin{enumerate}
\item Let $p,r\in[1,\infty]$and $s\in]0,1[$, then
\begin{equation*}\label{df}
\|f(t)\|_{B_{p,r}^s}     
\leq 
Ce^{CV(t)}
\Big(\|f_0\|_{B_{p,r}^s}+\int_{0}^te^{-CV(\tau)}\|g(\tau)\|_{B_{p,r}^{s}}d\tau\Big),
\end{equation*}
where $ V(t)=\int_{0}^t\|\nabla u(\tau)\|_{L^\infty}d\tau$ and $C$ is a constant depending  on $s.$
 \item Let $s\in]-1,0], r\in[1,+\infty]$ and $ p\in[2,+\infty]$ with $s+\frac3p>0$, then 
 \begin{equation*}\label{df}
\|f(t)\|_{B_{p,r}^s}     
\leq 
Ce^{CV_p(t)}
\Big(\|f_0\|_{B_{p,r}^s}+\int_{0}^te^{-CV_p(\tau)}\|g(\tau)\|_{B_{p,r}^{s}}d\tau\Big),
\end{equation*}
with  $ V_p(t)=\|\nabla u\|_{L^1_tL^\infty}+\|\diver u\|_{L^1_tB_{p,\infty}^{\frac3p}}.$ 
\end{enumerate}
\end{prop}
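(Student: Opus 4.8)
The plan is to localize the transport equation in frequency and to run an $L^p$ energy estimate on each dyadic block, the non-vanishing divergence being absorbed through an integration by parts. Setting $f_q:=\Delta_q f$ and applying $\Delta_q$ to the equation gives
\begin{equation*}
\partial_t f_q + u\cdot\nabla f_q = g_q - R_q,\qquad R_q:=[\Delta_q,u\cdot\nabla]f,\quad g_q:=\Delta_q g.
\end{equation*}
Multiplying by $|f_q|^{p-2}f_q$, integrating over $\RR^3$ and using the identity $\int_{\RR^3}(u\cdot\nabla f_q)|f_q|^{p-2}f_q\,dx=-\tfrac1p\int_{\RR^3}\divergence u\,|f_q|^p\,dx$, I obtain
\begin{equation*}
\frac{d}{dt}\|f_q(t)\|_{L^p}\le \tfrac1p\|\divergence u(t)\|_{L^\infty}\|f_q(t)\|_{L^p}+\|g_q(t)\|_{L^p}+\|R_q(t)\|_{L^p}.
\end{equation*}
After integration in time the whole matter reduces to bounding $\|R_q\|_{L^p}$ with the correct weights, since the divergence term is harmless ($\|\divergence u\|_{L^\infty}\le C\|\nabla u\|_{L^\infty}$).

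For the commutator I would use Bony's decomposition, splitting $R_q$ into a paraproduct commutator built on $[\Delta_q,T_{u^j}]\partial_j f$, a paraproduct piece built on $T_{\partial_j f}u^j$, and a remainder built on $\mathcal{R}(u^j,\partial_j f)$. In the range $s\in\,]0,1[$ of assertion (1) each of the three pieces is controlled exactly as in the proof of Lemma \ref{proz1} (using the elementary bound $\|[\Delta_q,a]b\|_{L^p}\le C2^{-q}\|\nabla a\|_{L^\infty}\|b\|_{L^p}$) by a sequence of the form $c_q\|\nabla u\|_{L^\infty}\|f\|_{B_{p,r}^s}$ with $\|(c_q)\|_{\ell^r}\le1$. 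Multiplying by $2^{qs}$, taking the $\ell^r$ norm, and using Minkowski's inequality to pull the time integral outside the dyadic summation, I arrive at
\begin{equation*}
\|f(t)\|_{B_{p,r}^s}\le \|f_0\|_{B_{p,r}^s}+\int_0^t\Big(C\|\nabla u(\tau)\|_{L^\infty}\|f(\tau)\|_{B_{p,r}^s}+\|g(\tau)\|_{B_{p,r}^s}\Big)d\tau,
\end{equation*}
and Gronwall's lemma in its integral form yields the claim with $V(t)=\int_0^t\|\nabla u\|_{L^\infty}d\tau$.

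The delicate case is assertion (2), where $s\in\,]-1,0]$ and the derivative falling on $f$ in the offending paraproduct/remainder pieces can no longer be absorbed by $\|\nabla u\|_{L^\infty}$ alone; this is precisely where the non-solenoidal character of $u$ must be used. Here I would rewrite, via $u\cdot\nabla f=\divergence(fu)-f\,\divergence u$, the commutator as
\begin{equation*}
R_q=\divergence\big([\Delta_q,u]f\big)-[\Delta_q,\divergence u]f,\qquad [\Delta_q,u]f:=\Delta_q(fu)-u\,\Delta_q f,
\end{equation*}
so that no derivative acts on $f$ anymore. The first term is estimated by $\|\nabla u\|_{L^\infty}\|f\|_{B_{p,r}^s}$, the $2^q$ lost under the divergence being recovered from the $2^{-q}$ gain of the commutator; the second is controlled by the Besov product law by $\|\divergence u\|_{B_{p,\infty}^{3/p}}\|f\|_{B_{p,r}^s}$. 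Collecting these bounds gives, with $V_p(t)=\|\nabla u\|_{L^1_tL^\infty}+\|\divergence u\|_{L^1_tB_{p,\infty}^{3/p}}$, the inequality
\begin{equation*}
\|f(t)\|_{B_{p,r}^s}\le \|f_0\|_{B_{p,r}^s}+\int_0^t\Big(C\,\tfrac{d}{d\tau}V_p(\tau)\,\|f(\tau)\|_{B_{p,r}^s}+\|g(\tau)\|_{B_{p,r}^s}\Big)d\tau,
\end{equation*}
and Gronwall again closes the estimate.

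I expect the main obstacle to be this negative-regularity commutator: one must verify that after transferring the derivative via the divergence form, every paraproduct and remainder piece generated by $f\,\divergence u$ is summable in $\ell^r$ with weight $2^{qs}$, which is exactly where the constraints $p\ge2$ and $s+\tfrac3p>0$ are consumed (the remainder requires $s+\tfrac3p>0$, while $p\ge2$ enters the Bernstein/Young steps at the negative index). A secondary technical point is the passage from the per-block bound to a single closed Gronwall inequality, since the commutator estimate couples all frequencies through the full norm $\|f\|_{B_{p,r}^s}$; the clean device is to apply Minkowski's inequality in the mixed $(\ell^r,L^1_t)$ scale, which decouples the time integration from the dyadic summation before Gronwall is invoked.
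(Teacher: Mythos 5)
Part (1) of your proposal is fine: it is the classical argument (the paper simply cites it as such), and your treatment of the commutator matches the proof of Lemma \ref{proz1}. The genuine problems are in part (2). Your identity $R_q=\divergence\big([\Delta_q,u]f\big)-[\Delta_q,\divergence u]f$ is correct, but the one-line estimate of the first term is not. The function $[\Delta_q,u]f=\Delta_q(uf)-u\,\Delta_qf$ is \emph{not} spectrally localized (because $u$ is not), so you cannot trade the outer divergence for a factor $2^q$ by Bernstein; and the elementary bound $\|[\Delta_q,u]f\|_{L^p}\lesssim 2^{-q}\|\nabla u\|_{L^\infty}\|f\|_{L^p}$ produces $\|f\|_{L^p}$, which is not controlled by $\|f\|_{B^s_{p,r}}$ when $s\le 0$, and in any case gives no weight $c_q2^{-qs}$. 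To estimate $\divergence([\Delta_q,u]f)$ you are forced to run Bony's decomposition on it, and then the piece $T_{\Delta_qf}u=\sum_{j\ge q+2}S_{j-1}\Delta_qf\,\Delta_ju$ generates, under the divergence, the tail $\sum_{j\ge q+2}S_{j-1}\Delta_qf\,\Delta_j\divergence u$: so $\divergence u$ contributions appear in your ``first term'' as well, and the clean split you claim ($\nabla u$ only in the first term, $\divergence u$ only in the second) does not survive. Worse, $\|\Delta_j\divergence u\|_{L^\infty}$ has no decay in $j$, so summing this tail term by term needs $\ell^1$-type control of $\divergence u$ (i.e.\ $B^{3/p}_{p,1}$ or $B^{0}_{\infty,1}$), strictly stronger than the stated $B^{3/p}_{p,\infty}$; it can be repaired only by regrouping the tail as $\big((\mathrm{Id}-S_{q})\divergence u\big)\Delta_qf$ and invoking the $L^\infty$-boundedness of $\mathrm{Id}-S_q$, none of which appears in your sketch.

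There is a second, unavoidable gap at the endpoint $s=0$, which the statement includes. By rewriting $u\cdot\nabla f=\divergence(fu)-f\,\divergence u$ globally you remove the derivative from $f$ everywhere; the price is that the paraproduct piece $\divergence(\Delta_qT_fu)$ requires $\|S_{q}f\|_{L^p}\lesssim c_q2^{-qs}\|f\|_{B^s_{p,r}}$, which is true only for $s<0$: at $s=0$ the low-frequency sum $\sum_{k\le q}\|\Delta_kf\|_{L^p}$ has no geometric gain and is simply not controlled by $\|f\|_{B^0_{p,r}}$. The paper's route avoids both issues by a different order of operations: apply Bony to $[\Delta_q,u\cdot\nabla]f$ \emph{first}; the two paraproduct commutators need only $\|\nabla u\|_{L^\infty}$ and work for every $s<1$, precisely because the derivative stays on $f$ (one meets $S_{j-1}\nabla f$, whose low-frequency sum is dominated by its top term); the divergence is extracted only inside the remainder, via $[\Delta_q,\Delta_ju^i\partial_i]\tilde\Delta_jf=\partial_i[\Delta_q,\Delta_ju^i]\tilde\Delta_jf-[\Delta_q,\Delta_j\divergence u]\tilde\Delta_jf$, where both factors live at comparable frequencies $j\ge q-4$. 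There the $\partial_i$ term costs $2^q$ and needs only $s>-1$ with $\|\nabla u\|_{L^\infty}$, while the $\divergence u$ term is handled by Bernstein from $L^{p/2}$ to $L^p$ (this is exactly where $p\ge2$ enters) and Young's inequality with kernel $2^{(q-j)(s+3/p)}$ over $\{q-j\le 4\}$ (this is exactly where $s+\frac3p>0$ is consumed and where the mere $\ell^\infty$ norm $\|\divergence u\|_{B^{3/p}_{p,\infty}}$ suffices). Your plan identifies the right exponent conditions but attaches them to estimates that, as written, either fail ($s=0$) or require stronger norms than the statement provides.
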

\begin{proof}
${\bf{(1)}}$ This estimate is classical, see for example \cite{MR2000a:76030} in the H\"{o}lderian case and the similar proof works as well for Besov spaces.

${\bf{(2)}}$ The proof for a general case can be found for example in \cite{Miao1} and for the convenience of the reader we will give here the complete proof in our special case.
We start with localizing in frequency the equation leading to,
$$
\begin{aligned}
\partial_{t}\Delta_q f+(u\cdot\nabla)\Delta_qf
&=\Delta_qg+(u\cdot\nabla)\Delta_qf-\Delta_q\big(u\cdot\nabla f\big)
\\&
=\Delta_qg-[\Delta_q,u\cdot\nabla]f.  
\end{aligned}
 $$
Taking the $L^p$ norm, then the zero divergence of the flow gives  
\begin{equation}\label{ter1}
\|\Delta_qf(t)\|_{L^p}
\leq
\|\Delta_qf_0\|_{L^p}
+\int_0^t\|\Delta_qg\|_{L^p}d\tau
+\int_0^t\Big\|[\Delta_q,u\cdot\nabla]f\Big\|_{L^p}d\tau.  
\end{equation}
From Bony's decomposition, the commutator may be decomposed as follows 
$$
[\Delta_q,u\cdot\nabla]f=[\Delta_{q},T_{u}\cdot\nabla]f+[\Delta_{q},T_{\nabla\cdot}\cdot u]f+[\Delta_{q},\mathcal{R}(u,\nabla)]f
$$
For the paraproducts we do not need any structure for the velocity, so reproducing the same  the proof of Lemma \ref{proz1} we find for $s<1$
$$
\|[\Delta_{q},T_{u}\cdot\nabla]f+[\Delta_{q},T_{\nabla\cdot}\cdot u]f\|_{L^p}\le C\|\nabla u\|_{L^\infty} \|f\|_{B_{p,r}^s} c_q2^{-qs},$$
with $(c_q)_{\ell^r}=1.$ Concerning the remainder term we write
\begin{eqnarray*}
[\Delta_{q},\mathcal{R}(u,\nabla)]f
&=&\sum_{j}[\Delta_{q},\Delta_j u^i\partial_i]\tilde\Delta_jf\\
&=&\partial_i\sum_{j\geq q-4}[\Delta_{q},\Delta_j u^i]\tilde\Delta_jf-\sum_{j\geq q-4}[\Delta_{q},\Delta_j \diver u]\tilde\Delta_jf\\
&=&\hbox{I}_q+\hbox{II}_q
\end{eqnarray*}
Similarly to the proof of Lemma \ref{proz1} we get for $s>-1$
$$
\|\hbox{I}_q\|_{L^p}\le C\|\nabla u\|_{L^\infty} \|f\|_{B_{p,r}^s} c_q2^{-qs},\quad (c_q)_{\ell^r}=1
$$
For the second term,  since the Fourier transform of  $[\Delta_{q},\Delta_j \diver u]\tilde\Delta_jf$ is supported in a bull of size $2^q$ then using Bernstein and H\"{o}lder inequalities we obtain for $p\geq2$
\begin{eqnarray*}
\|[\Delta_{q},\Delta_j \diver u]\tilde\Delta_jf\|_{L^p}&\le& C2^{\frac3p q} \|[\Delta_{q},\Delta_j \diver u]\tilde\Delta_jf\|_{L^{\frac{p}{2}}}\\
&\le&C2^{\frac3p q} \|\Delta_j \diver u\|_{L^p}\|\tilde\Delta_jf\|_{L^p}
\end{eqnarray*}
Therefore
\begin{eqnarray*}
2^{qs}\|\hbox{II}_q\|_{L^p}&\leq& C2^{\frac3p q}\sum_{j\geq q-4}\|\tilde\Delta_jf\|_{L^p} \|\Delta_j \diver u\|_{L^p}\\
&\le&  C\|\diver v\|_{B_{p,\infty}^{\frac3p}}\sum_{j\geq q-4}2^{(q-j)(s+\frac3p)}\big(2^{js}\|\tilde\Delta_jf\|_{L^p}\big)
\end{eqnarray*}
Now since $s+\frac3p>0$ then we can use Young inequality to deduce that
$$
\Big(2^{qs}\|\hbox{II}_q\|_{L^p}\Big)_{\ell^r}\le C\|\diver v\|_{B_{p,\infty}^{\frac3p}}\|f\|_{B_{p,r}^s}
$$
Combining these estimates yields for $ -\min(1,\frac3p)<s\leq0$ to
$$
\Big(2^{qs}\|[\Delta_q, u\cdot\nabla]u\|_{L^p}\Big)_{\ell^r}\le C\|\diver v\|_{B_{p,\infty}^{\frac3p}}\|f\|_{B_{p,r}^s}
$$
Plugging this estimate into \eqref{ter1} we get
$$
\|f(t)\|_{B_{p,r}^s}\lesssim \|f(0)\|_{B_{p,r}^s} +\int_0^t V_p(t)\|f(\tau)\|_{B_{p,r}^s}d\tau+\int_0^t\|g(\tau)\|_{B_{p,r}^s}d\tau, 
$$
with 
$$
V_p(t):=\|\nabla u\|_{L^\infty}+\|\diver u\|_{B_{p,\infty}^{\frac3p}}.
$$
To get the desired estimate it suffices to use Gronwall's lemma.

\end{proof}



The last point that we will discuss concerns the action of the operator $({\partial_r}/{r})\Delta^{-1}$ over axisymmetric functions. we will show that its restriction over this class of functions behaves like Riesz transforms. This study was done before in \cite{HR} and for the convenient of the reader  we will give the complete proof here.
 \begin{lem}\label{prop1}
We have for  every axisymmetric smooth scalar function $u$
\begin{equation}
\label{prop1-1}
({\partial_r}/{r})\Delta^{-1}u(x)=\frac{x_2^2}{r^2}\mathcal{R}_{11}u(x)+\frac{x_1^2}{r^2}\mathcal{R}_{22}u(x)-2\frac{x_1x_2}{r^2}\mathcal{R}_{12}u(x),
\end{equation}
with 
$
\mathcal{R}_{ij}=\partial_{ij}\Delta^{-1}.$  
\end{lem}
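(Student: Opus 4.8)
The plan is to reduce the identity to an elementary computation in cylindrical coordinates after recording the one structural fact we need: if $u$ is axisymmetric, then so is $\phi:=\Delta^{-1}u$. This is because the Laplacian commutes with the rotations $\mathcal{R}_\alpha$, so $\Delta(\phi\circ\mathcal{R}_\alpha)=(\Delta\phi)\circ\mathcal{R}_\alpha=u\circ\mathcal{R}_\alpha=u=\Delta\phi$, and by uniqueness $\phi\circ\mathcal{R}_\alpha=\phi$; thus $\phi=\phi(r,z)$ depends only on $r=(x_1^2+x_2^2)^{1/2}$ and $z=x_3$ (this is in the same spirit as Remark \ref{remaa}). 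For such a $\phi$ the chain rule gives $\partial_{x_1}\phi=\frac{x_1}{r}\partial_r\phi$ and $\partial_{x_2}\phi=\frac{x_2}{r}\partial_r\phi$, so that $\frac{x_1}{r}\partial_{x_1}\phi+\frac{x_2}{r}\partial_{x_2}\phi=\partial_r\phi$ and hence the left-hand side of the claim is simply $\frac{\partial_r}{r}\Delta^{-1}u=\frac1r\partial_r\phi$. It therefore suffices to express $\frac1r\partial_r\phi$ through the Cartesian second derivatives $\partial_{ij}\phi=\mathcal{R}_{ij}u$.

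First I would compute these second derivatives. Differentiating once more and using $\partial_{x_1}\!\big(\tfrac{x_1}{r}\big)=\tfrac{x_2^2}{r^3}$, $\partial_{x_2}\!\big(\tfrac{x_1}{r}\big)=-\tfrac{x_1x_2}{r^3}$ together with $\partial_{x_i}(\partial_r\phi)=\tfrac{x_i}{r}\partial_{rr}\phi$ (as $\partial_r\phi$ is again axisymmetric), one obtains
\begin{eqnarray*}
\partial_{x_1x_1}\phi&=&\frac{x_2^2}{r^3}\partial_r\phi+\frac{x_1^2}{r^2}\partial_{rr}\phi,\\
\partial_{x_2x_2}\phi&=&\frac{x_1^2}{r^3}\partial_r\phi+\frac{x_2^2}{r^2}\partial_{rr}\phi,\\
\partial_{x_1x_2}\phi&=&-\frac{x_1x_2}{r^3}\partial_r\phi+\frac{x_1x_2}{r^2}\partial_{rr}\phi.
\end{eqnarray*}

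Next I would form the linear combination appearing on the right-hand side of \eqref{prop1-1}, namely $\frac{x_2^2}{r^2}\partial_{x_1x_1}\phi+\frac{x_1^2}{r^2}\partial_{x_2x_2}\phi-2\frac{x_1x_2}{r^2}\partial_{x_1x_2}\phi$, and collect the coefficients of $\partial_{rr}\phi$ and of $\partial_r\phi$ separately. The $\partial_{rr}\phi$ coefficient is $\frac{x_1^2x_2^2}{r^4}+\frac{x_1^2x_2^2}{r^4}-2\frac{x_1^2x_2^2}{r^4}=0$, so the troublesome second radial derivative drops out. The $\partial_r\phi$ coefficient is $\frac{x_2^4+x_1^4+2x_1^2x_2^2}{r^5}=\frac{(x_1^2+x_2^2)^2}{r^5}=\frac1r$, which is exactly $\frac1r\partial_r\phi=\frac{\partial_r}{r}\Delta^{-1}u$. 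Substituting $\partial_{ij}\phi=\partial_{ij}\Delta^{-1}u=\mathcal{R}_{ij}u$ yields \eqref{prop1-1}.

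The computation itself is routine; the only genuine point of care is the reduction step, i.e. establishing that $\Delta^{-1}u$ is axisymmetric and that $\frac{\partial_r}{r}$ acting on an axisymmetric function coincides with $\frac1r\partial_r$, which is what legitimizes passing from the cylindrical derivative to the combination of Cartesian Riesz operators. One should also note in passing that all the factors $\frac{x_i x_j}{r^2}$ are bounded, so the singularities at $r=0$ are only apparent and the smoothness of $u$ guarantees every manipulation is valid pointwise; this is the observation behind the claim that $\frac{\partial_r}{r}\Delta^{-1}$ behaves like a genuine Riesz transform on axisymmetric functions, and it is what is used in the bound for $\big\|\frac{\partial_r}{r}\Delta^{-1}\divergence\vepsilon\big\|_{L^\infty}$ in Proposition \ref{vort12}.
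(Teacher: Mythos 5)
Your proof is correct and follows essentially the same route as the paper: both arguments rest on the observation that $\Delta^{-1}u$ is axisymmetric and then reduce \eqref{prop1-1} to a chain-rule computation relating Cartesian and cylindrical derivatives (the paper solves for $(\partial_r/r)f$ from the polar identity $\partial_{11}f+\partial_{22}f=\partial_{rr}f+(\partial_r/r)f$ together with the expansion of $\partial_{rr}$, while you expand each $\partial_{ij}f$ in terms of $\partial_r f$ and $\partial_{rr}f$ and check that the coefficients of $\partial_{rr}f$ cancel — the same algebra run in the opposite direction). Your justification of the axisymmetry of $\Delta^{-1}u$ via commutation of $\Delta$ with rotations is, if anything, cleaner than the paper's appeal to the Biot--Savart law.
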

\begin{proof}
We set $f=\Delta^{-1}u,$ then we can show from Biot-Savart law that $f$ is also axisymmetric. Hence we get
 by using  polar coordinates that 
\begin{equation}
\label{laplace1}
\partial_{11}f+\partial_{22}f  =  (\partial_r/r)f+\partial_{rr}f.
\end{equation}
 where 
$$
\partial_r=\frac{x_1}{r}\partial_1+\frac{x_2}{r}\partial_2.
$$
By using this expression of $\partial_{r}$, we obtain
\begin{eqnarray*}
\partial_{rr}&=&\big(\frac{x_1}{r}\partial_1+\frac{x_2}{r}\partial_2\big)^2
=\partial_r(\frac{x_1}{r})\partial_1+\partial_r(\frac{x_2}{r})\partial_2+\frac{x_1^2}{r^2}\partial_{11}+\frac{x_2^2}{r^2}\partial_{22}
+\frac{2x_1 x_2}{r^2}\partial_{12}.\\
&=&\frac{x_1^2}{r^2}\partial_{11}+\frac{x_2^2}{r^2}\partial_{22}+\frac{2x_1 x_2}{r^2}\partial_{12}
\end{eqnarray*}
 since
 $$
\partial_r(\frac{x_i}{r})=0,\quad\forall i\in\{1,2\}.
$$
This yields by using \eqref{laplace1} that 
\begin{eqnarray*}
{\partial_r \over r} f &=&(1-\frac{x_1^2}{r^2})\partial_{11} f +(1-\frac{x_2^2}{r^2})\partial_{22} f -\frac{2x_1 x_2}{r^2}\partial_{12}f \\
&=&\frac{x_2^2}{r^2}\partial_{11} f +\frac{x_1^2}{r^2}\partial_{22}f -\frac{2x_1 x_2}{r^2}\partial_{12}f.
\end{eqnarray*}
 To  get \eqref{prop1-1},   it suffices to replace $f$ by  $\Delta^{-1} u $.

\end{proof}

The following result describes  the anisotropic dilatation in Besov spaces.
\begin{lem}\label{dilatation}
Let $s\in]-1,+\infty[\backslash\{0\}$, $f:\RR^3\to \RR$ be a function belonging to $B_{\infty,\infty}^{s}, $ and denote by $f_{\lambda}(x_{1},x_{2},x_{3})=f(\lambda x_{1},x_{2},x_{3}).$ Then, there exists an absolute constant $C>0$ such that for all $\lambda\in]0,1[$ 
$$
\|f_{\lambda}\|_{B_{\infty,\infty}^{s}}\leq C\lambda^s\|f\|_{B_{\infty,\infty}^s}.
$$
\end{lem}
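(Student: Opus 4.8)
The plan is to work directly with the Littlewood--Paley characterization $\|g\|_{B_{\infty,\infty}^s}=\sup_{q\geq-1}2^{qs}\|\Delta_q g\|_{L^\infty}$ and to exploit the fact that an anisotropic contraction in the single variable $x_1$ distorts each dyadic annulus into a thin ellipsoidal shell. First I would decompose $f=\sum_{j\geq-1}\Delta_j f$, so that $f_\lambda=\sum_{j\geq-1}(\Delta_j f)_\lambda$ with $(\Delta_j f)_\lambda(x)=(\Delta_j f)(\lambda x_1,x_2,x_3)$, and record the elementary scaling identity $\widehat{(\Delta_j f)_\lambda}(\xi)=\lambda^{-1}\,\widehat{\Delta_j f}(\xi_1/\lambda,\xi_2,\xi_3)$.

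Second, I would determine the frequency support of each piece. Since $\widehat{\Delta_j f}$ is supported in the annulus $\{c_1 2^j\leq|\eta|\leq c_2 2^j\}$ (in a ball $\{|\eta|\lesssim1\}$ when $j=-1$), the identity above shows that $(\Delta_j f)_\lambda$ has spectrum in the ellipsoidal shell obtained by scaling the $\eta_1$-axis by $\lambda$. As $0<\lambda<1$, this shell is contained in $\{c_1\lambda 2^j\leq|\xi|\leq c_2 2^j\}$. Consequently $\Delta_q\big((\Delta_j f)_\lambda\big)\equiv0$ unless the annulus $|\xi|\sim2^q$ meets this shell, that is unless
\[
q-N_0\ \leq\ j\ \leq\ q+\log_2(1/\lambda)+N_0,
\]
for a fixed integer $N_0$ depending only on the support constants of $\varphi,\chi$. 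The gain of the lemma is encoded precisely in the length $\log_2(1/\lambda)$ of this window, which is the essential departure from the isotropic case.

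Third, for each admissible pair I would use the uniform $L^\infty$ boundedness of $\Delta_q$ (its convolution kernel has an $L^1$ norm independent of $q$) together with the dilation invariance $\|(\Delta_j f)_\lambda\|_{L^\infty}=\|\Delta_j f\|_{L^\infty}$, to get $\|\Delta_q((\Delta_j f)_\lambda)\|_{L^\infty}\leq C\,2^{-js}\|f\|_{B_{\infty,\infty}^s}$. Summing over the window and multiplying by $2^{qs}$ reduces everything to controlling the geometric sum $\sum_j 2^{(q-j)s}$ over $q-N_0\leq j\leq q+\log_2(1/\lambda)+N_0$. For $s<0$ this sum is dominated by its largest index $j\approx q+\log_2(1/\lambda)$, which produces exactly the factor $2^{-(\log_2(1/\lambda))s}=\lambda^{s}$; taking the supremum over $q$, and treating the low block $j=-1$, $q=-1$ separately, then yields $\|f_\lambda\|_{B_{\infty,\infty}^s}\leq C\lambda^s\|f\|_{B_{\infty,\infty}^s}$.

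The main obstacle is producing the genuine gain $\lambda^s$ rather than a crude $O(1)$ bound: this is what forces the anisotropic spectral analysis above, since an isotropic count would place $j$ only in a window of bounded length and lose the factor entirely. The delicate point is thus the summation of the geometric series, which behaves correctly exactly in the negative-regularity range relevant to the application (e.g.\ $s=-\tfrac12$ in \eqref{log1}); there the constraint $s>-1$ keeps the separated low-frequency contribution under control. In contrast, for $s>0$ the same series is dominated by the bottom scale $j\approx q$ and the method returns only a constant, so that endpoint genuinely requires a different treatment and is where one should expect the real difficulty to concentrate.
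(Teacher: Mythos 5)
Your proposal is correct and follows essentially the same route as the paper's proof: dyadic decomposition of $f$, the anisotropic Fourier-support analysis producing an interaction window of length $\log_2(1/\lambda)$ between input and output frequencies, uniform $L^\infty$ bounds on the blocks, a geometric sum dominated by the far end of the window, and a separate treatment of the low-frequency block. Your closing caveat about $s>0$ is also accurate: the paper's own argument yields the $\lambda^s$ gain only for $s<0$, since its geometric sum is $O(1)$ when $s>0$ and its low-frequency estimate $\|\Delta_{-1}f_\lambda\|_{L^\infty}\le C\|f\|_{B_{\infty,\infty}^s}$ cannot be improved to $C\lambda^s\|f\|_{B_{\infty,\infty}^s}$ there, which is harmless because the lemma is applied only with $s=-\tfrac12$.
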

\begin{proof}
Let $q\geq-1,$ we denote by $f_{q,\lambda}=(\Delta_{q}f)_{\lambda}.$ From the definition we have
\begin{eqnarray*}
\|f_{\lambda}\|_{B_{\infty,\infty}^s}&=&\|\Delta_{-1}f_{\lambda}\|_{L^\infty}+\sup_{j\in\NN}2^{js}\|\Delta_{j}f_{\lambda}\|_{L^\infty}.
\end{eqnarray*}
For $j,q\in\NN,$ the Fourier transform of $\Delta_{j}f_{q,\lambda}$ is supported in the set
$$
\Big\{|\xi_{1}|+|\xi'|\approx 2^j\quad\hbox{and}\quad \lambda^{-1} |\xi_{1}|+|\xi'|\approx 2^q \Big\},
$$
where $\xi'=(\xi_{2},\xi_{3}).$
A direct consideration  shows that this set is empty \mbox{if $2^q\lesssim 2^j$ or $2^{j-q}\lesssim \lambda$}. Thus we get for an integer $n_{1}$
\begin{eqnarray*}
\|f_{\lambda}\|_{B_{\infty,\infty}^s}&\lesssim&\|\Delta_{-1}f_\lambda\|_{L^\infty}+\sum_{q-n_{1}+\log_2\lambda\leq j\atop
j\leq q+n_{1}}2^{js}\|\Delta_{j}f_{q,\lambda}\|_{L^\infty}\\
&\lesssim&\|\Delta_{-1}f_\lambda\|_{L^\infty}+\sum_{q-n_{1}+\log_2\lambda\leq j\atop
j\leq q+n_{1}}2^{(j-q)s} 2^{qs}\|f_{q}\|_{L^\infty}\\
&\lesssim&\|\Delta_{-1}f_\lambda\|_{L^\infty}+\|f\|_{B_{\infty,\infty}^s}\sum_{-n_{1}+\log_2\lambda}^{n_1}
2^{js} \\
&\lesssim&\|\Delta_{-1}f_\lambda\|_{L^\infty}+\|f\|_{B_{\infty,\infty}^s}\lambda^s.
\end{eqnarray*}
Let us now turn to the estimate of the low frequency $\|\Delta_{-1}f_\lambda\|_{L^\infty}.$ We observe that the Fourier transform of  $\Delta_{-1}f_{q,\lambda}$ vanishes when $q\geq n_0$ where $n_0$ is an absolute integer. Consequently we get
\begin{eqnarray*}
\|\Delta_{-1}f_{\lambda}\|_{L^\infty}&=&\sum_{q=-1}^{n_0}\|\Delta_{-1}f_{q,\lambda}\|_{L^\infty}\\
&\le&C\sum_{q=-1}^{n_0}\|\Delta_{q}f\|_{L^\infty}\\
&\le& C\|f\|_{B_{\infty,\infty}^s}
\end{eqnarray*}
This completes the proof of the lemma.
\end{proof}

{\bf Acknowledgments}

The author would like to thank J.-Y. Chemin, I. Gallagher and D. Li for the fruitful discussions about the critical case.

\end{document}